\documentclass[12pt]{amsart}
\usepackage[margin=1in]{geometry}
\usepackage[latin1]{inputenc}
\usepackage[dvipsnames]{xcolor}
\usepackage{fullpage}
\usepackage{array}
\usepackage{enumerate}
\usepackage{amsthm,amsmath, amssymb, amsfonts,multicol}
\usepackage{verbatim}
\usepackage{tikz}
\usetikzlibrary{fit,matrix,positioning,shapes}
\usetikzlibrary{decorations.shapes, patterns}
\usepackage{graphicx,float}
\usepackage{mathtools,hyperref}
\usepackage{ytableau}
\usepackage{thmtools}
\usepackage{thm-restate}

\usepackage{stmaryrd}
\usepackage{latexsym} 
\usepackage{subfigure}
\usepackage[all]{xy}
\usepackage{graphics}
\usepackage{pdfsync}
\usepackage{bm}
\usetikzlibrary{arrows.meta}
\usetikzlibrary{shapes.geometric}

\usetikzlibrary{decorations.pathreplacing}

\theoremstyle{definition}

\newtheorem{theorem}{Theorem}[section]
\newtheorem{lemma}[theorem]{Lemma}
\newtheorem{corollary}[theorem]{Corollary}

\newtheorem{definition}[theorem]{Definition}

\newtheorem{prop}[theorem]{Proposition}
\newtheorem*{theorem*}{Theorem}
\newtheorem{remark}[theorem]{Remark}

\newcommand{\dg}{\text{dg}}
\newcommand{\leg}{\text{leg}}
\newcommand{\arm}{\text{arm}}
\newcommand{\armL}{\text{arm}^{\text{left}}}
\newcommand{\armR}{\text{arm}^{\text{right}}}
\newcommand{\wdg}{\widehat{\text{dg}}}

\newcommand{\wt}{\text{wt}}
\newcommand{\wtx}{\text{wt}_x}
\newcommand{\wtqt}{\text{wt}_{q,t}}
\newcommand{\ws}{\widehat{\sigma}}

\newcommand{\Des}{\text{Des}}
\newcommand{\maj}{\text{maj}}
\newcommand{\coinv}{\text{coinv}}

\newcommand{\MLQ}{\text{MLQ} }
\newcommand{\HHL}{\text{HHL} }
\newcommand{\NAF}{\text{NAF}}
\newcommand{\MF}{\text{MF}}

\def\T#1#2{\mathcal{T}_{\left(#1,#2\right)}}

\usepackage{xargs}

\tikzset{
    blue box/.style={
        minimum width=0.7cm, 
        minimum height=0.5cm, 
        inner sep=0pt, 
        pattern=north east lines, 
        pattern color=blue!30
    },
    red box/.style={
        minimum width=0.7cm, 
        minimum height=0.5cm, 
        inner sep=0pt, 
        pattern=north west lines, 
        pattern color=red!30
    },
    green box/.style={
        minimum width=0.7cm, 
        minimum height=0.5cm, 
        inner sep=0pt, 
        pattern=crosshatch, 
        pattern color=green!30
    }
}

\title{Compression Formula for Nonattacking Fillings of Non-symmetric Macdonald Polynomials}

\author[Stewart]{Alexander P. Stewart}
\address[A. P. Stewart]{Department of Mathematics, North Carolina State University, Raleigh, NC, USA}
\email{apstewa2@ncsu.edu}
\urladdr{\href{https://sites.google.com/ncsu.edu/alexander-stewart/home?authuser=0}{https://sites.google.com/ncsu.edu/alexander-stewart}}

\begin{document}

\maketitle
\date{}

\begin{abstract}
    Macdonald polynomials are an important family of orthogonal polynomials that have applications to statistical physics. Corteel, Mandelshtam, and Williams proved a combinatorial model involving multiline queues for partitions; however, their work does not apply to weak compositions in general. We conjecture a first step towards this direction via a variant of the nonattacking filling model for nonsymmetric Macdonald Polynomials $E_\mu$. This new model compresses the classical nonattacking filling model of Haglund, Haiman, and Loehr and mirrors the concept of trivial pairs in the multiline queue model. We prove it for a large family of weak compositions and explain the difficulty of extending this proof to the general case.
\end{abstract}

\smallskip 

\noindent {\bf Keywords:}
Nonattacking Fillings, Multiline Queues, Macdonald Polynomials
\smallskip 

\noindent {\bf MSC classification:} 05E05, 33D52

\section{Introduction}

Nonsymmetric Macdonald polynomials $E_\mu(x_1, \ldots, x_n;q,t) = E_\mu$ are an important family of polynomials indexed by compositions, whose coefficients are rational functions in parameters $q$ and $t$. They were introduced by Opdam~\cite{O95}, Macdonald~\cite{M03}, and Cherednik~\cite{C95} to settle conjectures related to symmetric Macdonald polynomials. Since then, a lot of research has been done on nonsymmetric Macdonald polynomials, connecting them to physics~\cite{CDL17}, DAHA~\cite{M03}, various combinatorial models \cite{AM22, CMW22, HHL08, RY11}, and other families of polynomials via specializations of $q$ and $t$: Key polynomials ($q=t=0$) and Type A Demazure atoms ($q,t \to \infty$) \cite{Ma09}. Additionally, various generalizations of $E_\mu$ include permuted basement nonsymmetric Macdonald polynomials~\cite{A19}, Koornwinder polynomials~\cite{M03}, and nonsymmetric Macdonald superpolynomials~\cite{D21}. In this paper, we are interested in studying two models: the nonattacking filling model of Haglund, Hamian, and Loehr~\cite{HHL08}, and the multiline queue model of Corteel, Mandelshtam, and Williams in~\cite{CMW22}. We write \MLQ for the latter model and \HHL for the former. 

The first combinatorial model for nonsymmetric Macdonald polynomials was due to Haglund, Haiman, and Loehr~\cite{HHL08}. Their model depends on certain fillings, called nonattacking fillings, of augmented diagrams and only works for type $\text{\emph{GL}}_{n}$, see Section~\ref{subsect:NFM} for more details. It was built on work they did for the modified Macdonald polynomials. Soon after, Ram and Yip~\cite{RY11} defined the alcove walk model which works for all Lie types. In 2018, based on the work of Cantini, de Gier, and Wheeler~\cite{CdGW15}, Corteel, Mandelshtam, and Williams~\cite{CMW22} modified the multiline queue model and showed that it is equivalent to nonsymmetric Macdonald polynomials indexed by partitions. In that same paper, their computations showed that there are fewer multiline queues than nonattacking fillings, and they noticed that a sum of a subset of permuted basement nonattacking fillings corresponded to a single multiline queue. Furthermore, they left open the question of whether there is an extension of the \MLQ model to all compositions. The primary goal of this paper is to provide a first step in that direction.

To do so, we analyze the \HHL and \MLQ models to understand the key differences between them. As a result, we introduce a variant of the \HHL model that includes the notion of a trivial pair from the \MLQ model to nonattacking fillings. The combinatorial objects in this family form a subset of nonattacking fillings that we call \emph{restricted nonattacking fillings}; see Section~\ref{section:RNF} for more details. We can view them as an extension of the \MLQ model to compositions because, in the partition case, the number of restricted nonattacking fillings is equal to the number of multiline queues; see Section~\ref{section:RNF} for a proof of this. Also, we define a modified $q,t$-weight based on the $q,t$-weight of the \HHL model that takes into account trivial pairs for general augmented diagrams. We conjecture that the restricted nonattacking fillings form a combinatorial model for $E_\mu$.

\begin{restatable}[]{conj}{MainConjecture}
\label{conj:MainConj}
Let $\mu$ be a composition and $\mathcal{R}(\mu)$ be the set
    of restricted nonattacking fillings. Then 
    $$E_\mu = \sum_{\ws \in \mathcal{R}(\mu)} \wt_{m}(\ws)x^{\sigma},$$
    where $\wt_{m}(\ws)$ is the mixed weight of the restricted 
    nonattacking filling $\ws$.
\end{restatable}

In Section~\ref{section:Compressing}, we prove that Conjecture~\ref{conj:MainConj} is true for a large family of compositions; see Section~\ref{section:Compressing} for the details. 

\begin{restatable}[]{theorem}{MainTheorem}
\label{thm:MainTheorem}
If $\mu$ is a composition such that each box $u \in \dg'(\mu)$ hits at most one box, then Conjecture~\ref{conj:MainConj} holds.
\end{restatable}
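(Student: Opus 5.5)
The plan is to start from the Haglund--Haiman--Loehr formula
$$E_\mu=\sum_{\sigma\in\NAF(\mu)}\wtqt(\sigma)\,x^{\sigma},$$
and to construct a surjective \emph{compression map} $\pi:\NAF(\mu)\to\mathcal{R}(\mu)$ that preserves the $x$-weight. Given such a map, Conjecture~\ref{conj:MainConj} for $\mu$ reduces to the fibre identity
$$\wt_m(\ws)=\sum_{\sigma\in\pi^{-1}(\ws)}\wtqt(\sigma)\qquad\text{for every }\ws\in\mathcal{R}(\mu).$$
I would define $\pi$ by looking at the pairs of cells that form a ``trivial pair'' in a nonattacking filling $\sigma$. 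In such a pair, a box $u\in\dg'(\mu)$ and the box it hits carry entries that may be interchanged or re-ordered without changing the multiset of entries in each row. The map $\pi$ replaces each such configuration by the canonical representative singled out in the definition of restricted nonattacking fillings. Since $\pi$ only permutes entries within rows, $x^{\sigma}$ is preserved automatically.

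The hypothesis that each $u\in\dg'(\mu)$ hits at most one box is what makes this local. Each box then takes part in at most one trivial-pair move, so the moves at different boxes act on disjoint sets of cells and commute. Consequently each fibre $\pi^{-1}(\ws)$ should factor as a product $\prod_{u}F_u$, where $F_u$ is the small set of local fillings of the cell $u$ and the unique cell it hits that are compatible with $\ws$. The first step is to make this precise. I would show that changing entries at one trivial pair neither creates nor destroys attacking pairs elsewhere, and does not alter descents or inversion triples that avoid those two cells; the latter uses the nonattacking condition together with the fact that no third cell is hit.

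Next I would compute how $\wtqt$ changes along a fibre. The $q$-part $q^{\maj}$ and the $t$-part $t^{\coinv}$, together with the factors $\frac{1-t}{1-q^{\leg+1}t^{\arm+1}}$, split into three kinds of contributions: those attached to descents and triples away from the trivial pairs, which are constant on the fibre; those attached to the pair $(u,\text{hit}(u))$, which depend only on the local choice in $F_u$; and possibly contributions from triples that involve one cell of a pair and one cell outside it. Under the hypothesis, the third kind should either vanish or be constant on the fibre, since a triple containing $u$ and a third cell would force $u$ to hit a second box. The fibre sum then factors as
$$(\text{constant})\cdot\prod_u\Bigl(\sum_{\tau\in F_u}\text{local weight}(\tau)\Bigr).$$
Each local sum is a two-term (or few-term) rational identity, of the type $t\cdot\frac{1-t}{1-q^at^b}+\bigl(1-\frac{1-t}{1-q^at^b}\bigr)=\ldots$. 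I expect it to reproduce exactly the modified factor that the mixed weight $\wt_m$ assigns to a trivial pair, so these identities can be checked directly by hand.

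I expect the main obstacle to be the weight bookkeeping in this second step. One must show that swapping entries in a trivial pair changes $\maj$, $\coinv$ and the arm/leg factors only through the local pair, with no interaction with the basement row or other columns. This is precisely where a box hitting two boxes would create overlapping moves and cross terms, so the hypothesis should be used there and nowhere else. My first attempt would be a case analysis on the relative column positions and heights of $u$ and the box it hits, comparing entries with the neighbours in the same rows. Finally, I would confirm surjectivity of $\pi$ by showing that every restricted filling is itself nonattacking and is fixed by $\pi$.
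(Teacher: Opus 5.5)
There is a genuine gap. The map $\pi$ is never defined, and the structural claims meant to stand in for a definition (locality of the moves, disjointness, a factorization $\prod_u F_u$ of fibres) are false.

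The restricted model singles out no ``canonical representative''; it only forbids $\ws(u)=\ws(v)$ when $v$ hits $u$. The two cells in such a coincidence lie in different rows and carry the same entry, so interchanging them does nothing. The move that actually removes a coincidence $\ws(i',j)=\ws(i,j-1)$ exchanges the entries of the two same-row cells $(i,j)$ and $(i',j)$, which turns $(i,j)$ into a trivial pair. That move is not local.

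Take $\mu=(0,2,2)$, $n=3$, which satisfies the hypothesis. Consider the restricted filling whose columns $2$ and $3$ read $2,2$ and $1,3$ in rows $1,2$. Exchanging only the row-$1$ entries gives columns $1,2$ and $2,3$, and then $(2,2)$ and $(3,1)$ both contain $2$ and attack. Conversely, the nonattacking filling with columns $1,3$ and $2,2$ has $\ws(3,1)=2=\ws(2,0)$. Exchanging only its row-$1$ entries gives columns $2,3$ and $1,2$. This destroys the trivial pair at $(3,2)$, so a new factor $\frac{1-t}{1-q^{l+1}t^{a+1}}$ appears at a cell outside the pair. It also creates the new violation $\ws(3,2)=2=\ws(2,1)$.

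So the moves for $(3,1)$ and $(3,2)$ overlap and do not commute. The hypothesis that each box hits at most one box only decouples different pairs of equal columns, not different rows of the same pair. Your reason for the vanishing of cross terms also confuses triples with hitting. Type I and II triples range over the whole arm of a box, and moving a descent from $(i,j)$ to $(i',j)$ changes $\coinv$ by $a(i,j)+1$.

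What is missing is the paper's operator $\T{i}{j}$. It exchanges the entries of $(i,j)$ and $(i',j)$, then keeps exchanging the entries of $(i,j+s)$ and $(i',j+s)$ up the two columns as long as a trivial pair, or a change in orientation of the type II triple in the next row, forces it. The lemmas of Section~\ref{section:Operator} show three things:
\begin{itemize}
\item the top of the cascade contributes nothing;
\item the middle rows contribute only error terms $\pi_{\ws'}-\pi_{\ws}$ coming from trivial pairs;
\item row $j$ contributes $\Delta\maj\in\{0,\pm(l(i,j)+1)\}$ and $\Delta\coinv\in\{\pm a(i',j),\pm 1\}$.
\end{itemize}

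To kill the error terms, the compression is not done in one stroke. The paper passes through intermediate mixed fillings, restricting one box at a time, top-down within each two-column block, so that the boxes above are already restricted (Definition~\ref{def:SpecialRestFunc}). Each step then rests on the identity $1+q^{l+1}t^{b}\frac{1-t}{1-q^{l+1}t^{b}}=\frac{1-q^{l+1}t^{b+1}}{1-q^{l+1}t^{b}}$ with $b=a(i,j)+1-t(i,j)$, or on its $t^{-1}$ variant when $\{x,v,w\}$ is a coinversion triple. The numerator cancels the denominator of $(i',j)$ because $l(i,j)=l(i',j)$ and $a(i',j)=a(i,j)+1$. What remains is the mixed factor of $(i',j)$ with $t(i',j)$ increased by one (Proposition~\ref{prop:InductionStep}).

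Your guessed local identity is not of this form. Without the cascade and the ordering, nothing in your plan makes the fibre sums collapse.
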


\noindent The proof is based on a compression of the \HHL model that is achieved by a sequence of intermediate fillings that we call mixed nonattacking fillings; see Section~\ref{section:MNF} for more information. Then, we define a family of operators, denoted $T_{i}{j}$, that act on pairs of consecutive boxes in the diagram of a composition $\mu$, and we prove a series of technical results that allow us to track the change in $q,t$-weight after applying one of these operators; for more details, see Section~\ref{section:Operator}. We use these operators to pass from the \HHL model to the restricted nonattacking model via mixed nonattacking fillings; see Section~\ref{section:Compressing}. These operators are similar to the ones defined in~\cite{CHMMW22}.

While working on this paper, Yu Jin and Lin sketched a proof of a similar result in their paper~\cite{JL26}. The primary differences between our results and theirs are the set of fillings and the proof method. The set of restricted nonattacking fillings that we describe in this paper is more restrictive, resulting in our model having fewer terms for general compositions. For the partition case, we get the same number of terms and the same weight. As for the proof methods, we do not use \emph{superization}, which is a method by which the set of allowable entries is extended, and the $q,t$-weight may be negative depending on some details that we do not go into. Due to the similarity between our results and theirs, we believe it is possible to prove Conjecture~\ref{conj:MainConj} using their techniques.

This paper is organized as follows. In Section~\ref{sect:Background}, we review the nonattacking fillings model and the multiline queue model. In Section~\ref{section:NM}, we introduce mixed nonattacking fillings and restricted nonattacking fillings. Section~\ref{section:Operator} introduces the operators $\T{i}{j}$ that act on mixed nonattacking fillings, and we study their effect on the major index and coinversions statistics. Section~\ref{section:Compressing} contains the proof of the compression formula for the nontrivial subset of compositions mentioned above. Finally, in Section~\ref{sect:Conclusion}, we explain why it is challenging to extend this method to more than two columns, and we share the cases for which Conjecture~\ref{conj:MainConj} has been verified using Sage. 

\section{Background}\label{sect:Background}

In this section, we introduce the two models studied in this paper. We begin by reviewing the nonattacking fillings model 
introduced by Haglund, Haiman, and Loehr in 2008. See~\cite{HHL08} for more details and proofs. Then, we move on to the multiline queue model that was introduced by Corteel, Mandelshtam, and Williams in 2018. See~\cite{CMW22} for more details and proofs. 

\subsection{Nonattacking Fillings} \label{subsect:NFM}

A \emph{(weak) composition} is a sequence $\mu = (\mu_1, \ldots, \mu_n)$ of nonnegative integers. The $\mu_i$'s are referred to as the parts of $\mu$, and we define the length of $\mu$, denoted $\ell(\mu)$, to be the number of parts. If the parts form a weakly decreasing sequence, then the composition is called a \emph{partition}, which we denote by $\lambda = (\lambda_1 , \ldots, \lambda_n)$ where $\lambda_1 \ge \ldots \lambda_n \ge 0$. 

Given a composition $\mu$, we define the \emph{diagram of $\mu$}, $\dg'(\mu)$, to be the set of coordinate pairs $(i,j)$, with $1 \le i \le \ell(\mu)$ and $1 \le j \le \mu_i$. A diagram is represented by $\mu_i$ boxes in column $i$. The row numbers increase from bottom to top, and the column numbers increase left to right like Cartesian coordinates. The coordinates $(i,j)$ of a box indicate the column and row of a box. The prime on $\dg$ is there to differentiate it from the convention where the parts of $\mu$ form rows instead of columns.

The \emph{augmented diagram of $\mu$}, $\wdg(\mu)$, is obtained from the diagram $\dg'(\mu)$ by adding the boxes $(i,0)$ for $1 \le i \le \ell(\mu)$. This augmented row is called the \emph{basement}. A pair of boxes is said \emph{to attack each other} if they are in the same row, or they are in consecutive rows where the box in the lower row is to the right of the box in the upper row. If $u$ is a box in the diagram of $\mu$, then we denote the box directly beneath $u$ by $d(u)$. Note that $d(u)$ is in the augmented diagram. In Figure~\ref{fig:ExampleOfDiagrams}, we show the diagram and augmented diagram of $\mu = (0,2,2,2,0,4,2,4)$.

\begin{figure}[h!]
    \centering
    \begin{tikzpicture}[scale = 0.85]
        \begin{scope}
        \draw [thick] (0,1) -- (9,1);
        \draw [thick] (1,2) -- (4,2);
        \draw [thick] (5,2) -- (8,2);
        \draw [thick] (1,3) -- (4,3);
        \draw [thick] (5,3) -- (8,3);
        \draw [thick] (5,4) -- (6,4);
        \draw [thick] (7,4) -- (8,4);
        \draw [thick] (5,5) -- (6,5);
        \draw [thick] (7,5) -- (8,5);

        \draw [thick] (1,1) -- (1,3);
        \draw [thick] (2,1) -- (2,3);
        \draw [thick] (3,1) -- (3,3);
        \draw [thick] (4,1) -- (4,3);
        \draw [thick] (5,1) -- (5,5);
        \draw [thick] (6,1) -- (6,5);
        \draw [thick] (7,1) -- (7,5);
        \draw [thick] (8,1) -- (8,5);
        \end{scope}
        \begin{scope}[xshift = 11cm]

        \draw [thick] (0,0) -- (8,0);
        
        \draw [thick] (-0.5,1) -- (8.5,1);
        \draw [thick] (1,2) -- (4,2);
        \draw [thick] (5,2) -- (8,2);
        \draw [thick] (1,3) -- (4,3);
        \draw [thick] (5,3) -- (8,3);
        \draw [thick] (5,4) -- (6,4);
        \draw [thick] (7,4) -- (8,4);
        \draw [thick] (5,5) -- (6,5);
        \draw [thick] (7,5) -- (8,5);

        \draw [thick] (0,0) -- (0,1);
        \draw [thick] (1,0) -- (1,3);
        \draw [thick] (2,0) -- (2,3);
        \draw [thick] (3,0) -- (3,3);
        \draw [thick] (4,0) -- (4,3);
        \draw [thick] (5,0) -- (5,5);
        \draw [thick] (6,0) -- (6,5);
        \draw [thick] (7,0) -- (7,5);
        \draw [thick] (8,0) -- (8,5);

        \node at (5.5,2.5) {$\mathbf{u}$};
        \node at (5.5,1.5) {$\mathbf{d(u)}$};

        \fill[red!20] (1,2) rectangle (4,3);
        \draw[thick] (1,2) grid (4,3);
        \fill[red!20] (6,2) rectangle (8,3);
        \draw[thick] (6,2) grid (8,3);
        \fill[red!20] (6,1) rectangle (8,2);
        \draw[thick] (6,1) grid (8,2);
        
        \end{scope}
    \end{tikzpicture}
    \caption{The diagram $\dg'(\mu)$ (left) and the augmented diagram $\wdg(\mu)$ (right) of $\mu = (0,2,2,2,0,4,2,4)$. The red boxes attack 
    $u$ and $d(u)$ is the box below $u$.}
    \label{fig:ExampleOfDiagrams}
\end{figure}

Given a box $u$ in the diagram of $\mu$, we define four sets of boxes. The \emph{leg of $u$} is the set of boxes directly above $u$, and we write $\leg(\mu)$ to denote this set. The \emph{left arm of $u$} is the set of boxes in row $j$ to the left of $u$ whose column is as tall as the column containing $\mu$ or shorter, denoted by $\armL(u)$. The \emph{right arm of $u$} is the set of boxes in the row below $u$ that are to the right of $u$ and whose columns are strictly shorter than the column containing $u$, and we write $\armR(u)$ for this set. Finally, the \emph{arm of $u$}, denoted $\arm(u)$, is the union of the left arm of $u$ and the right arm of $u$. We define $l(u):= |\leg(u)|$ and $a(u) := |\arm(u)|$. In Figure~\ref{fig:ExOfTheFourSets}, we show the various sets for the box $u = (6,2)$ in the augmented diagram of $\mu = (0,2,2,2,0,4,2,4)$.

\begin{figure}[h!]
    \centering
    \begin{tikzpicture}[scale=0.75]
        \draw [thick] (0,0) -- (8,0);
        
        \draw [thick] (-1,1) -- (9,1);
        \draw [thick] (1,2) -- (4,2);
        \draw [thick] (5,2) -- (8,2);
        \draw [thick] (1,3) -- (4,3);
        \draw [thick] (5,3) -- (8,3);
        \draw [thick] (5,4) -- (6,4);
        \draw [thick] (7,4) -- (8,4);
        \draw [thick] (5,5) -- (6,5);
        \draw [thick] (7,5) -- (8,5);

        \draw [thick] (0,0) -- (0,1);
        \draw [thick] (1,0) -- (1,3);
        \draw [thick] (2,0) -- (2,3);
        \draw [thick] (3,0) -- (3,3);
        \draw [thick] (4,0) -- (4,3);
        \draw [thick] (5,0) -- (5,5);
        \draw [thick] (6,0) -- (6,5);
        \draw [thick] (7,0) -- (7,5);
        \draw [thick] (8,0) -- (8,5);

        \node at (5.5,2.5) {$\mathbf{u}$};
        \fill[blue!20] (5,3) rectangle (6,5);
        \draw[thick] (5,3) grid (6,5);
        \fill[red!20] (1,2) rectangle (4,3);
        \draw[thick] (1,2) grid (4,3);
        \fill[green!20] (6,1) rectangle (7,2);
        \draw[thick] (6,1) grid (7,2);
        
    \end{tikzpicture}
    \caption{Depiction of sets $\leg(u)$ in blue, $\armL(u)$ in red, and $\armR(u)$ in green for $u = (6,2)$.}
    \label{fig:ExOfTheFourSets}
\end{figure}

Let $[n] := \{1, 2, \ldots, n\}$. Given a composition $\mu$, we define a \emph{filling of the diagram of $\mu$} as a function $\sigma: \dg'(\mu) \to [n]$, and we extend a filling to the augmented diagram by defining a function $\ws: \wdg(\mu) \to [n]$, so that $\ws(u) = \sigma(u)$ for $u \in \dg'(\mu)$ and $\ws(i,0) = i$ for all $1 \le i \le \ell(\mu)$. In both cases, we refer to $\sigma$ and $\ws$ as fillings, and the different notation indicates whether the filling is of $\dg'(\mu)$ or $\wdg(\mu)$. We say a filling $\ws$ is a \emph{nonattacking filling} if $\ws(u) \neq \ws(v)$ whenever $u$ and $v$ attack each other. We say that $\sigma$ is a \emph{nonattacking filling} if its extension to the augmented diagram is a nonattacking filling. The set of all nonattacking fillings is denoted $\NAF(\mu)$. In Figure~\ref{fig:ExOfFillings} we show an example of a filling that is attacking (left) and a filling that is nonattacking (right). The filling on the left is attacking because of two situations: (1) there are two 5s in row 1, and (2) the 1 in row 2 is to the left of the 1 in row 1. 

\begin{figure}[h!]
    \centering
    \begin{tikzpicture}[scale=0.75]
    \begin{scope}
        \node[] at (4,-1) {Attacking};
        \draw [thick] (0,0) -- (8,0);
        
        \draw [thick] (0,1) -- (8,1);
        \draw [thick] (1,2) -- (4,2);
        \draw [thick] (5,2) -- (8,2);
        \draw [thick] (1,3) -- (4,3);
        \draw [thick] (5,3) -- (8,3);
        \draw [thick] (5,4) -- (6,4);
        \draw [thick] (7,4) -- (8,4);
        \draw [thick] (5,5) -- (6,5);
        \draw [thick] (7,5) -- (8,5);

        \draw [thick] (0,0) -- (0,1);
        \draw [thick] (1,0) -- (1,3);
        \draw [thick] (2,0) -- (2,3);
        \draw [thick] (3,0) -- (3,3);
        \draw [thick] (4,0) -- (4,3);
        \draw [thick] (5,0) -- (5,5);
        \draw [thick] (6,0) -- (6,5);
        \draw [thick] (7,0) -- (7,5);
        \draw [thick] (8,0) -- (8,5);

        \node at (0.5,0.5) {$1$};
        \node at (1.5,0.5) {$2$};
        \node at (2.5,0.5) {$3$};
        \node at (3.5,0.5) {$4$};
        \node at (4.5,0.5) {$5$};
        \node at (5.5,0.5) {$6$};
        \node at (6.5,0.5) {$7$};
        \node at (7.5,0.5) {$8$};

        \node at (1.5,1.5) {$7$};
        \node at (2.5,1.5) {$3$};
        \node[color=blue] at (3.5,1.5) {$\mathbf{1}$};
        \node[color=red] at (5.5,1.5) {$\mathbf{5}$};
        \node at (6.5,1.5) {$6$};
        \node[color=red] at (7.5,1.5) {$\mathbf{5}$};

        \node[color=blue] at (1.5,2.5) {$\mathbf{1}$};
        \node at (2.5,2.5) {$3$};
        \node at (3.5,2.5) {$2$};
        \node at (5.5,2.5) {$4$};
        \node at (6.5,2.5) {$6$};
        \node at (7.5,2.5) {$8$};

        \node at (5.5,3.5) {$1$};
        \node at (7.5,3.5) {$8$};

        \node at (5.5,4.5) {$2$};
        \node at (7.5,4.5) {$3$};
        
        \end{scope}
        \begin{scope}[xshift = 10cm]
        \node[] at (4,-1) {nonattacking};
        
        \draw [thick] (0,0) -- (8,0);
        
        \draw [thick] (0,1) -- (8,1);
        \draw [thick] (1,2) -- (4,2);
        \draw [thick] (5,2) -- (8,2);
        \draw [thick] (1,3) -- (4,3);
        \draw [thick] (5,3) -- (8,3);
        \draw [thick] (5,4) -- (6,4);
        \draw [thick] (7,4) -- (8,4);
        \draw [thick] (5,5) -- (6,5);
        \draw [thick] (7,5) -- (8,5);

        \draw [thick] (0,0) -- (0,1);
        \draw [thick] (1,0) -- (1,3);
        \draw [thick] (2,0) -- (2,3);
        \draw [thick] (3,0) -- (3,3);
        \draw [thick] (4,0) -- (4,3);
        \draw [thick] (5,0) -- (5,5);
        \draw [thick] (6,0) -- (6,5);
        \draw [thick] (7,0) -- (7,5);
        \draw [thick] (8,0) -- (8,5);

        \node at (0.5,0.5) {$1$};
        \node at (1.5,0.5) {$2$};
        \node at (2.5,0.5) {$3$};
        \node at (3.5,0.5) {$4$};
        \node at (4.5,0.5) {$5$};
        \node at (5.5,0.5) {$6$};
        \node at (6.5,0.5) {$7$};
        \node at (7.5,0.5) {$8$};

        \node at (1.5,1.5) {$2$};
        \node at (2.5,1.5) {$3$};
        \node at (3.5,1.5) {$1$};
        \node at (5.5,1.5) {$5$};
        \node at (6.5,1.5) {$6$};
        \node at (7.5,1.5) {$7$};

        \node at (1.5,2.5) {$4$};
        \node at (2.5,2.5) {$3$};
        \node at (3.5,2.5) {$2$};
        \node at (5.5,2.5) {$1$};
        \node at (6.5,2.5) {$6$};
        \node at (7.5,2.5) {$8$};

        \node at (5.5,3.5) {$2$};
        \node at (7.5,3.5) {$8$};

        \node at (5.5,4.5) {$2$};
        \node at (7.5,4.5) {$3$};
        
        \end{scope}
    \end{tikzpicture}
    \caption{The filling on the left is not a nonattacking filling (because of the boxes highlighted in blue and red), while the filling on the right is nonattacking.}
    \label{fig:ExOfFillings}
\end{figure}

Given a nonattacking filling $\ws$ of $\wdg(\mu)$, we define several important statistics. A box $u \in \dg'(\mu)$ is called a \emph{descent} if $\ws(u) > \ws(d(u))$, and we write $\Des(\ws)$ for the set of all descents. The \emph{major index} of $\ws$ is then defined to be
$$\maj(\ws):= \sum_{\ws \in \Des(\ws)} (l(u) + 1).$$ 
A \emph{type I triple} is a set of three boxes $\{v,u,d(u)\}$ such that $v$ and $d(u)$ are in the same row, the left box is $d(u)$, and the column containing $v$ is strictly shorter than the column containing $u$ and $d(u)$. This is depicted by the three boxes on the left in Figure~\ref{fig:DifferentTriples}. A \emph{type II triple} is a set of three boxes $\{v,u,d(u)\}$, so that $v$ and $u$ are in the same row, the left box is $v$, and the column containing $v$ is either the same height as the column containing $u$ or shorter. This is shown by the three boxes on the right in Figure~\ref{fig:DifferentTriples}. A type I (resp. type II) triple is called a type I (resp. type II)  \emph{coinversion triple} if $\ws(u) < \ws(v) < \ws(d(u))$, $\ws(d(u)) < \ws(v) < \ws(u)$, or $\ws(u) < \ws(d(u)) < \ws(v)$. We define  $\coinv(\ws)$ to be the number of type I and type II coinversion triples of $\ws$. In Figure~\ref{fig:DifferentTriples}, we depict Type I and Type II coinversion triples, where the direction of the arrow indicates the direction in which the entries are increasing.
\begin{figure}[h!]
    \centering
    \begin{tikzpicture}[scale=0.75]
        \begin{scope}
            \node at (1.5,-0.5) {Type I};
        
            \draw[thick] (0,0) grid (1,2);
            \draw[thick] (2,0) grid (3,1);

            \draw[thick, ->] (1.25,1) arc (180:-90:0.25cm);

            \node[] at (0.5,0.5) {$d(u)$};
            \node[] at (0.5,1.5) {$u$};
            \node[] at (2.5,0.5) {$v$};
        \end{scope}
        \begin{scope}[xshift = 6cm]
            \node at (1.5,-0.5) {Type II};
            
            \draw[thick] (0,1) grid (1,2);
            \draw[thick] (2,0) grid (3,2);

            \draw[thick, <-] (1.5,1.25) arc (90:-180:0.25cm);

            \node[] at (2.5,0.5) {$d(u)$};
            \node[] at (2.5,1.5) {$u$};
            \node[] at (0.5,1.5) {$v$};
        \end{scope}
    \end{tikzpicture}
    \caption{Representation of the type I (left) and type II (right) coinversion triples}
    \label{fig:DifferentTriples}
\end{figure}

As an example, consider the nonattacking filling in Figure~\ref{fig:ExOfFillings}. The set of descents is $\{(2,2),(4,2),(8,2),(6,4)\}$; hence, $\maj(\ws) = 6$. The set of type I coinversion triple is $\{(6,2),(5,3),(5,2)\}$ while a some of the type II coinversion triples are $$\{(2,1),(4,1),(4,0)\}, \{(2,2),(4,2),(4,1)\}, \{(2,2),(6,2),(6,1)\},  \text{and} \{(7,2),(8,2),(8,1)\}.$$ There are 13 coinversion triples in total. 

We define the $q,t$-weight of a nonattacking filling using these statistics as follows:
\begin{equation*}
    \wtqt(\ws) = q^{\maj(\ws)}t^{\coinv(\ws)}
    \prod_{\substack{u \in \dg'(\mu) \\ \ws(u) \neq \ws(d(u))}} 
    \frac{1-t}{1-q^{l(u)+1}t^{a(u)+1}}.
\end{equation*}
Additionally, we associate each filling $\sigma$ with a monomial $x^\sigma = x_1^{k_1} \ldots x_n^{k_n}$, where $k_i$ is the number of boxes $u \in dg'(\mu)$ such that $\sigma(u) = i$. For example, the nonattacking filling in Figure~\ref{fig:ExOfFillings} has $q,t$-weight:
$$q^{6}t^{13} \frac{1-t}{1-q^2t^4} \frac{1-t}{1-q^4t^5} \frac{1-t}{1-q^4t^6} \frac{1-t}{1-qt} \frac{1-t}{1-qt^3}\frac{1-t}{1-q^3t^5} \frac{1-t}{1-q^3t^6} \frac{1-t}{1-q^2t^2} \frac{1-t}{1-qt^2}$$
and monomial 
$$x^{\sigma} = x_1^2x_2^4x_3^3x_4x_5x_6^2x_7x_8^2.$$

Haglund, Haiman, and Loehr proved that the set of nonattacking fillings of a composition $\mu$, together with their $q,t$-weights and $x^\sigma$ monomials, is a combinatorial model for nonsymmetric Macdonald polynomials. 
\begin{theorem}\cite[Theorem 3.5.1]{HHL08}
    For a composition $\mu$,
    \begin{equation*}
        E_\mu = \sum_{\substack{\sigma:\mu \to [n] \\
        \text{nonattacking}}} \wtqt(\ws) x^\sigma .
    \end{equation*}
\end{theorem}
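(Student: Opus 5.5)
This statement is the Haglund--Haiman--Loehr formula \cite[Theorem 3.5.1]{HHL08}, which the paper quotes as background and does not reprove. A self-contained proof would follow the strategy of \cite{HHL08}: show that the right-hand side satisfies a characterization that determines $E_\mu$ uniquely.

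Write $F_\mu := \sum_{\sigma} \wtqt(\ws)x^\sigma$. The plan uses the characterization of $E_\mu$ by the Knop--Sahi recursion. The normalization is $E_{(0,\dots,0)} = 1$. Two relations generate everything else:
\begin{enumerate}[(i)]
\item the \emph{cyclic shift} relation
\[
E_{(\mu_n+1,\mu_1,\dots,\mu_{n-1})} = q^{\mu_n} x_1 E_\mu(x_2,\dots,x_n,q^{-1}x_1);
\]
\item for $\mu_i<\mu_{i+1}$, the \emph{Demazure--Lusztig} relation, which expresses $E_{s_i\mu}$ as $T_i E_\mu$ plus a scalar multiple of $E_\mu$. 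The scalar is a rational function of $q^{\mu_{i+1}-\mu_i}$ and of $t$ raised to a rank statistic.
\end{enumerate}
Every composition is reached from the zero composition by these two moves, so it suffices to check the base case and show that $F_\mu$ obeys (i) and (ii). The base case is immediate: the zero composition has no boxes, so the only filling is empty and $F_{(0,\dots,0)} = 1$.

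\textbf{Step 1: the cyclic shift.} I would build a bijection on nonattacking fillings. Move the last column to the front and add a new bottom box, filled with the basement value. Relabel entries cyclically, $i \mapsto i+1$, with $n \mapsto 1$. Then check three things:
\begin{itemize}
\item the nonattacking condition is preserved;
\item coinversion triples and arm lengths correspond exactly, which is why $t$ does not change;
\item descents change only where the relabeled value $1$ occurs, and this accounts precisely for the factor $q^{\mu_n}$ and the substitution $x_1 \mapsto q^{-1}x_1$.
\end{itemize}

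\textbf{Step 2: the Demazure--Lusztig relation.} This is the main obstacle. Here one must show $F_{s_i\mu} = T_iF_\mu + c\,F_\mu$ by a local argument on the letters $i,i+1$. Group fillings by the positions of all entries other than $i$ and $i+1$. Within a group, the entries $i,i+1$ form chains, and the weights involve factors of the form $\frac{1-t}{1-q^{l+1}t^{a+1}}$. I would first try to show that each group's contribution transforms correctly under $T_i$, by analyzing a two-column swap of columns $i$ and $i+1$ in the diagram.

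This requires a careful case analysis: which column is taller, and how descents and type I and type II triples change when values $i$ and $i+1$ are interchanged. The delicate point is that the arm lengths $a(u)$ in the denominators change when the columns swap. These changes must telescope against the eigenvalue coefficient $c$.

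If the direct case analysis becomes unmanageable, the fallback is the route Haglund--Haiman--Loehr take. Derive the nonsymmetric formula from their symmetric/integral-form formula via the nonsymmetric $q,t$-Kostka specialization. Then verify the triangularity, $E_\mu = x^\mu + \text{lower terms}$ in Bruhat order, together with the eigenvalue equations for the Cherednik operators $Y_i$. These two properties also determine $E_\mu$ uniquely.
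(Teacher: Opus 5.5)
The paper gives no proof of this statement beyond the citation to \cite[Theorem 3.5.1]{HHL08}, so if you only mean to cite it, you match the paper. Your outline is also the method HHL actually use: they verify the Knop--Sahi recurrences on nonattacking fillings. Step 1 is correct.
Move column $n$ to the front, shifted up one row.
\begin{itemize}
\item This preserves all attacking relations, legs and arms, while triples through that column switch between type I and type II.
\item The new box $(1,1)$ is forced to contain $1$, since it attacks the basement entries $2,\dots,n$.
\item One has $\maj(\ws')=\maj(\ws)+\mu_n-\#\{u\in\dg'(\mu):\sigma(u)=n\}$. This produces $q^{\mu_n}$ and the replacement of the old variable $x_n$ (not $x_1$) by $q^{-1}x_1$.
\end{itemize}

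As a self-contained proof, though, there is a genuine gap. Step 2 carries the entire content of the theorem, and you offer only a plan. The identity relates fillings of two different shapes, $\dg'(\mu)$ and $\dg'(s_i\mu)$, while $T_i$ acts on the letters $i,i+1$. You never say how the two families of fillings are matched, or how $c$ arises. The obvious matching swaps columns $i,i+1$ together with the letters $i\leftrightarrow i+1$, which keeps the basement consistent. But it fails to preserve the nonattacking condition, because $(i,j)$ attacks $(i+1,j-1)$ but not $(i+1,j+1)$. It also alters the arms and triples linking the two columns. Handling this is the real work.

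Note also the direction of the recursion. Here $E_{(1,0)}=x_1$ and $E_{(0,1)}=x_2+\frac{1-t}{1-qt}x_1=\bigl(T_1+\frac{1-t}{1-qt}\bigr)E_{(1,0)}$, for the Demazure--Lusztig operator with $T_1x_1=x_2$. So $E_{s_i\mu}=(T_i+c)E_\mu$ is the relation for $\mu_i>\mu_{i+1}$; the reverse direction picks up an extra overall scalar.

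Your fallback does not close the gap. HHL do not obtain the nonsymmetric formula from the symmetric one. Nor can any symmetric formula determine $E_\mu$ on its own, since symmetrization forgets exactly what distinguishes the $E_\nu$ with $\nu$ in one $S_n$-orbit. The characterization by triangularity and $Y_i$-eigenvalues is a valid uniqueness statement. However, verifying the eigenvalue equations on nonattacking fillings is at least as hard as Step 2, and you give no mechanism for it.
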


\subsection{Multiline Queue Model} \label{subsect: MQM}

Let $L$ and $n$ be positive integers. An $L \times n$ \emph{ball system} $B$ is an array where each position is either empty or occupied by a ball, the number of balls weakly decreases from bottom to top, and there is at least one ball in row $L$. Thus, $L$ is the number of rows and $n$ is the number of columns of the array. The rows are labeled 1 through $L$ from bottom to top. See Figure~\ref{fig:BallSystem} for an example of a $4 \times 7$ ball system with 12 balls. 

\begin{figure}[h!]
    \centering
    \begin{tikzpicture}[scale=0.75]
        
        \draw[] (0.5,0.35) circle (0.3cm);
        \draw[] (1.5,0.35) circle (0.3cm);
        \draw[fill=black] (2.5,0.35) circle (0.05cm);
        \draw[] (3.5,0.35) circle (0.3cm);
        \draw[fill=black] (4.5,0.35) circle (0.05cm);
        \draw[fill=black] (5.5,0.35) circle (0.05cm);
        \draw[] (6.5,0.35) circle (0.3cm);

        \draw[fill=black] (.5,1.5) circle (0.05cm);
        \draw[] (1.5,1.5) circle (0.3cm);
        \draw[fill=black] (2.5,1.5) circle (0.05cm);
        \draw[fill=black] (3.5,1.5) circle (0.05cm);
        \draw[] (4.5,1.5) circle (0.3cm);
        \draw[] (5.5,1.5) circle (0.3cm);
        \draw[fill=black] (6.5,1.5) circle (0.05cm);

        \draw[fill=black] (.5,2.65) circle (0.05cm);
        \draw[fill=black] (1.5,2.65) circle (0.05cm);
        \draw[fill=black] (2.5,2.65) circle (0.05cm);
        \draw[fill=black] (3.5,2.65) circle (0.05cm);
        \draw[] (4.5,2.65) circle (0.3cm);
        \draw[] (5.5,2.65) circle (0.3cm);
        \draw[] (6.5,2.65) circle (0.3cm);

        \draw[] (0.5,3.8) circle (0.3cm);
        \draw[fill=black] (1.5,3.8) circle (0.05cm);
        \draw[fill=black] (2.5,3.8) circle (0.05cm);
        \draw[] (3.5,3.8) circle (0.3cm);
        \draw[fill=black] (4.5,3.8) circle (0.05cm);
        \draw[fill=black] (5.5,3.8) circle (0.05cm);
        \draw[fill=black] (6.5,3.8) circle (0.05cm);

        \node[] at (-1.5,0.35) {Row $1$};
        \node[] at (-1.5,1.5) {Row $2$};
        \node[] at (-1.5,2.65) {Row $3$};
        \node[] at (-1.5,3.8) {Row $4$};
    \end{tikzpicture}
    \caption{A $4 \times 7$ ball system with 4 balls in row 1, 3 balls 
    in row 2, 3 balls in row 3, and 2 balls in row 4.}
    \label{fig:BallSystem}
\end{figure}

A \emph{multiline queue} of a ball system $B$ is a diagram
where each ball is matched with a ball in the row below and
assigned a label in $[L]$ according to rules that we describe
below. We visualize a pairing in the ball diagram via the
shortest possible strand that either travels straight down or
left to right, wrapping around if necessary.

We define an algorithm that takes in a ball system $B$ and
outputs a multiline queue $Q$. Suppose $B$ is an $L \times n$
ball system. First, each ball in row $L$ is paired with a ball
in row $L-1$ in the following manner: if a ball in row $L$
has a ball directly below it, then we must pair those balls; such a
pairing is called a \emph{trivial pair}; otherwise, a ball in
row $L$ is paired with a ball in row $L-1$ by 
the shortest possible strand that travels left to right,
wrapping around if necessary. Each ball paired at this step
is labeled by an $L$. Then, we proceed to repeat the previous
step, maintaining the same requirements for trivial pairs,
for the balls labeled by an $L$ in row $L-1$, and
labeling the balls paired at this step with an $L$. We
repeat this for each ball labeled by an $L$, decreasing by
one row at each step, ending once each ball in row $L$ is
connected to a ball in row 1. Next, we repeat the same 
procedure that we used with the balls in row $L$, for each
unpaired ball in row $L-1$, but instead we label each ball
paired at this step with an $L-1$. We continue in this way
for each set of unpaired balls in rows $L, L-1, \ldots, 2$.
Finally, any remaining unpaired ball in row $1$ is labeled by a 1. This completes the algorithm. The result is a
series of strands paired with the last ball in row 1.

\begin{remark}
    A ball $b$ in a pairing step pairs with a ball $b'$ directly below $b$ if and only if $b'$ is free. 
\end{remark}

The labels of the balls in row 1 form a composition $\mu
= (\mu_1, \ldots, \mu_n)$, where $\mu_i$ is either the label
of the ball in row 1 and column $i$ or $\mu_i = 0$ if there
is no such ball. Then, $\mu$ is the \emph{type of a multiline
queue} and the multiline queue is said to be of \emph{type
$\mu$}. The set of all multiline queues of type $\mu$ is 
denoted $\MLQ(\mu)$. In Figure~\ref{fig:MLQExample}, we show
an example of a multiline queue with type $\mu = 
(2,4,0,1,0,0,4)$. There are two trivial pairs in the example:
a pair of balls labeled 4 in column 2, and a pair of balls
labeled 4 in column 6.

\begin{figure}[h!]
    \centering
    \begin{tikzpicture}[scale=0.75]
        
        \draw[] (0.5,0.35) circle (0.3cm);
        \draw[] (1.5,0.35) circle (0.3cm);
        \draw[fill=black] (2.5,0.35) circle (0.05cm);
        \draw[] (3.5,0.35) circle (0.3cm);
        \draw[fill=black] (4.5,0.35) circle (0.05cm);
        \draw[fill=black] (5.5,0.35) circle (0.05cm);
        \draw[] (6.5,0.35) circle (0.3cm);

        \draw[fill=black] (.5,1.5) circle (0.05cm);
        \draw[] (1.5,1.5) circle (0.3cm);
        \draw[fill=black] (2.5,1.5) circle (0.05cm);
        \draw[fill=black] (3.5,1.5) circle (0.05cm);
        \draw[] (4.5,1.5) circle (0.3cm);
        \draw[] (5.5,1.5) circle (0.3cm);
        \draw[fill=black] (6.5,1.5) circle (0.05cm);

        \draw[fill=black] (.5,2.65) circle (0.05cm);
        \draw[fill=black] (1.5,2.65) circle (0.05cm);
        \draw[fill=black] (2.5,2.65) circle (0.05cm);
        \draw[fill=black] (3.5,2.65) circle (0.05cm);
        \draw[] (4.5,2.65) circle (0.3cm);
        \draw[] (5.5,2.65) circle (0.3cm);
        \draw[] (6.5,2.65) circle (0.3cm);

        \draw[] (0.5,3.8) circle (0.3cm);
        \draw[fill=black] (1.5,3.8) circle (0.05cm);
        \draw[fill=black] (2.5,3.8) circle (0.05cm);
        \draw[] (3.5,3.8) circle (0.3cm);
        \draw[fill=black] (4.5,3.8) circle (0.05cm);
        \draw[fill=black] (5.5,3.8) circle (0.05cm);
        \draw[fill=black] (6.5,3.8) circle (0.05cm);

        \node[] at (-1.5,0.35) {Row $1$};
        \node[] at (-1.5,1.5) {Row $2$};
        \node[] at (-1.5,2.65) {Row $3$};
        \node[] at (-1.5,3.8) {Row $4$};

        \draw[thick, -] (0.5, 3.5) to[out=-40,in=160] (5.5,2.95);
        \draw[thick, -] (3.5, 3.5) to[out=-20,in=150] (6.5,2.95);
        \node[] at (0.5,3.8) {$4$};
        \node[] at (3.5,3.8) {$4$};
        \node[] at (5.5,2.65) {$4$};
        \node[] at (6.5,2.65) {$4$};
        \node[] at(4.5,2.65) {$3$};

        \draw[color=red, thick, -] (5.5, 2.35) to[out=-90,in=90] (5.5,1.8);
        \draw[thick, -] (6.5, 2.35) to[out=-45,in=180] (7.5, 2.1);
        \draw[thick,-] (-0.5, 2.1) to[out=0, in=160] (4.5,1.8);
        \draw[thick, -] (4.5, 2.35) to[out=-25,in=180] (7.5,1.9);
        \draw[thick,-] (-0.5, 1.9) to[out=0, in=160] (1.5,1.8);
        \node[] at (1.5,1.5) {$3$};
        \node[] at (4.5,1.5) {$4$};
        \node[] at (5.5,1.5) {$4$};

        \draw[red, thick, -] (1.5,1.2) to[out=--90,in=90] (1.5,0.65);
        \draw[thick, -] (5.5,1.2) to[out=-95,in=90] (6.5,0.65);
        \draw[thick, -] (4.5,1.2) to[out=-35, in=180] (7.5,0.95);
        \draw[thick, -] (-0.5,0.95) to[out=0, in=90] (0.5,0.65);
        \node[] at (0.5,0.35) {$4$};
        \node[] at (1.5,0.35) {$3$};
        \node[] at (3.5,0.35) {$1$};
        \node[] at (6.5,0.35) {$4$};
    \end{tikzpicture}
    \caption{A multiline queue of type $(4,3,0,1,0,0,4)$, with the two trivial pairs depicted with red strands.}
    \label{fig:MLQExample}
\end{figure}

Given a multiline queue $Q$ of type $\mu$, the $q,t$-weight of $Q$ is defined using the following process. Consider the nontrivial pairings between rows $r$ and $r-1$, for $1 \le r \le L$. We read the balls in decreasing order of their labels and right to left within a label. We imagine placing strands one at a time following the reading order. An unpaired ball is considered \emph{free}. Trivially paired balls are not considered free. If ball $b$ in column $c$ is paired to ball $b'$ in column $c'$, any free ball that the strand passes over is said to be \emph{skipped}. Let $i$ be the label of the pair $p$ of balls $b$ and $b'$, then 
\[   
\wtqt(p) := 
     \begin{cases}
       \dfrac{(1-t)t^{\#\text{skipped}}}{1-q^{i-
       r+1}t^{\#\text{free}}} \cdot q^{i-
       r+1} & \quad \text{ if } c' < c,\\[0.4cm]
       \dfrac{(1-t)t^{\#\text{skipped}}}{1-q^{i
       -r+1}t^{\#\text{free}}} & \quad \text{ if } c' > c. \\
     \end{cases}
\]

\begin{remark}
    Trivial pairs do not contribute to the $q,t$-weight.
\end{remark}

The $q,t$-weight for $Q$ is the following product over
all pairs $p$ in the multiline queue:
\[
\wtqt(Q) := \prod_p \wtqt(p).
\]
Then the \emph{weight} of a multiline queue $Q$ is 
\[
\wt(Q) = \wtqt(Q)\wtx(Q),
\]
where $\wtx(Q)=x_1^{m_1} \ldots x_r^{m_r}$ and $m_i$ is the
number of balls in column $i$. For example, the weight of the
multiline queue in Figure~\ref{fig:MLQExample} is
$$\wt(Q) = \frac{t(1-t)}{1-qt^2}\frac{1-t}{1-qt}
\frac{q^2(1-t)}{1-q^2t^2}
\frac{1-t}{1-q^3t^3}\frac{q(1-t)}
{1-qt^2}x_1^2x_2^2x_4^2x_5x_6x_7^2.$$

Corteel, Mandelshtam, and Williams proved that multiline
queues are, in fact, a combinatorial model for nonsymmetric
Macdonald polynomials for partitions, $\lambda$. 
\begin{lemma}\cite[Lemma 1.26]{CMW22}
    For any partition $\lambda$,
    $$E_\lambda = \sum_{Q \in \MLQ(\lambda)} \wt(Q).$$
\end{lemma}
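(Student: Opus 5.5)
This lemma is quoted from \cite{CMW22}, and I would prove it through the \HHL model, since that is the route most useful later in the paper. We may assume Theorem~3.5.1 of \cite{HHL08}, so it suffices to show that for a partition $\lambda$
$$\sum_{\sigma\in\NAF(\lambda)}\wtqt(\ws)x^\sigma=\sum_{Q\in\MLQ(\lambda)}\wt(Q).$$
The plan is to construct a surjection $\Phi:\NAF(\lambda)\to\MLQ(\lambda)$ with three properties: it preserves the monomial, it sends row $r$ of the filling to row $r$ of the ball system, and the $q,t$-weights summed over each fiber $\Phi^{-1}(Q)$ equal $\wtqt(Q)$.

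\textbf{Step 1: the map $\Phi$.} Let $L=\lambda_1$. Read the filling column by column.
\begin{enumerate}
\item The entries $\ws(i,0),\ws(i,1),\dots,\ws(i,\lambda_i)$ of column $i$ form a strand that starts at position $i$ in row $1$ and climbs to row $\lambda_i$.
\item In each row $r$, place a ball in position $\ws(i,r)$ for every column $i$ with $\lambda_i\ge r$. Nonattacking in a row says these positions are distinct, so we get a ball system, and the ball counts are weakly decreasing in $r$ because $\lambda$ is a partition.
\item The pairing between rows $r$ and $r-1$ is the one read off from consecutive entries of the columns, and the label of each strand is $\lambda_i$.
\end{enumerate}
For a partition every column to the left of $u$ is at least as tall as $u$'s column, so the left arm of $u$ consists of the boxes in row $j$ to the left of $u$ lying in columns of the same height. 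The right arm consists of the boxes in the row below $u$ lying in strictly shorter columns.

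\textbf{Step 2: the canonical representative.} The strands obtained in Step 1 need not satisfy the MLQ rule of taking the shortest cyclic strand, nor the forced trivial pairs. I would define $\Phi(\ws)$ as the MLQ on the same ball system, since the MLQ algorithm determines the pairing from the balls alone. Two things must then be checked.
\begin{enumerate}
\item Every nonattacking filling gives a valid ball system, with the correct row counts and the correct type $\lambda$ in row $1$.
\item The fiber over $Q$ is exactly the set of ways to realize the row-to-row bijections with the given labels. Under the nonattacking condition between consecutive rows, one strand at a time in the MLQ reading order, these amount to choosing which free ball each strand lands on. Trivial pairs correspond to $\ws(u)=\ws(d(u))$, which contributes no factor to the weight, matching the fact that trivial pairs do not contribute in the MLQ model.
\end{enumerate}

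\textbf{Step 3: the weight identity on a fiber.} This is the heart of the proof. Fix rows $r$ and $r-1$, and process the strands with a given label $i$ right to left. For one such strand, its row-$r$ box $u$ satisfies $l(u)+1=i-r+1$, and $a(u)$ equals the number of currently free balls. Summing over the possible landing positions gives a geometric-type sum
$$\sum_{k\ge 0}\frac{(1-t)\,t^{k}\,q^{\epsilon_k}}{1-q^{i-r+1}t^{f}},$$
where $k$ counts skipped free balls, $f$ is the number of free balls, and $\epsilon_k=i-r+1$ exactly when the strand wraps around. Wrapping corresponds to a descent, $\ws(u)>\ws(d(u))$, so the wrap-around case contributes $q^{\,l(u)+1}$ to $\maj$. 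The coinversion triples of type~I and type~II involving $u$ should account for the $t^{\#\text{skipped}}$ factor.

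I expect the main obstacle to be this bookkeeping. The denominators in $\wtqt(\ws)$ depend on the arm of $u$, and the arm does not obviously equal the number of free balls at the moment the strand is placed, because triples involving boxes in shorter columns or in not-yet-processed strands interfere. I would first try induction on the number of columns, inserting strands in the MLQ reading order and proving a local lemma that each insertion multiplies the weight by the corresponding factor $\wtqt(p)$. If this local lemma fails, I would fall back on the permuted-basement refinement used in \cite{CMW22}. There one sums over basements and uses the Knop--Sahi / Hecke operator recursion to reduce the claim to a single-strand identity.
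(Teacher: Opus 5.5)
Your reduction to the identity $\sum_{\sigma\in\NAF(\lambda)}\wtqt(\ws)x^\sigma=\sum_{Q\in\MLQ(\lambda)}\wt(Q)$ via \cite[Theorem 3.5.1]{HHL08} is legitimate. So are the observations $l(u)+1=i-r+1$ and ``wrap $=$ descent''. The problem is that the map $\Phi$ is not well defined, and Step~3 does not compute fiber sums.

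In the model of \cite{CMW22} a multiline queue is a ball system \emph{together with} a pairing. A ball with no free ball directly below may be paired with \emph{any} free ball in the row below, and the factor $t^{\#\text{skipped}}$ records that choice. The queue of Figure~\ref{fig:MLQExample} carries such a factor $t$; if ``shortest strand'' were a deterministic rule, every skip count would be zero.

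Concretely, take $\lambda=(2,2,0,0)$ with row-$2$ balls in columns $3$ and $4$. There are two queues of type $\lambda$ on this ball system: the ball in column $4$, read first, either lands in column $1$ or skips it and lands in column $2$. Their weights are $\frac{q^2(1-t)^2}{(1-qt)(1-qt^2)}$ and $\frac{q^2t(1-t)^2}{(1-qt)(1-qt^2)}$, times $x_1x_2x_3x_4$.

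So there is no ``MLQ on the same ball system'', and your geometric sum over landing positions in Step~3 ranges over \emph{different} queues, not over a fiber. The column strands of a filling form an honest multiline queue exactly when the filling is restricted; this is the proposition in Section~\ref{section:RNF}. The content of the lemma is therefore (a) distributing the non-restricted fillings into classes, one per queue, and (b) proving that each class sums to $\wtqt(Q)$. Your proposal supplies neither.

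Nor can (b) be done filling by filling, as your dictionary (skipped balls read off from the coinversion triples at $u$, and $a(u)$ equal to the number of free balls) would require. For the same $\lambda$, the queue with row-$2$ balls in columns $1$ and $3$ is unique, with weight $\frac{q(1-t)}{1-qt}\,x_1^2x_2x_3$. The non-restricted filling with $(\ws(1,2),\ws(2,2))=(3,1)$ (here $(1,1)$ hits $(2,2)$) has two boxes with $\ws(u)\neq\ws(d(u))$, hence a factor $(1-t)^2$. Only its sum with the filling $(1,3)$ can match. More generally, \HHL weights carry denominators $1-q^{l(u)+1}t^{a(u)+1}$ that are absent from the compressed weight (compare Figure~\ref{fig:CompressionExample}).

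What is needed is genuine cancellation inside classes. The operators $\T{i}{j}$ achieve this in Proposition~\ref{prop:InductionStep}, but only under the hypothesis of Theorem~\ref{thm:MainResult}. For a partition, that hypothesis means no part $\ge 2$ occurs three or more times, and Section~\ref{sect:Conclusion} shows that transpositions do not suffice beyond that (Figure~\ref{fig:k=4Example}). Even then, the output is restricted fillings with the mixed weight. The paper states that the natural bijection to multiline queues is not weight-preserving, and that a weight-preserving one is not known.

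Your ``local lemma'' is thus precisely the step the paper leaves open, not bookkeeping, and the fallback is a pointer rather than an argument. The paper itself gives no proof: it imports the lemma from \cite{CMW22}. That proof rests on the Cantini--de Gier--Wheeler results \cite{CdGW15} on the multispecies ASEP, not on a compression of the \HHL model. According to the introduction, such compressions were only observed computationally there.
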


\section{A new combinatorial model} \label{section:NM}

In Section \ref{section:MNF}, we define mixed nonattacking fillings and mixed $q,t$-weight. Then, in Section \ref{section:RNF}, we define restricted nonattacking fillings and their $q,t$-weight. As mentioned in the introduction, we conjecture that the restricted nonattacking fillings form a combinatorial model for nonsymmetric Macdonald polynomials for all compositions. In Section \ref{section:CFNF}, we outline the proof of this conjecture for a large family of compositions, and we explain the reasoning behind the compression. 

Before we begin, we define what it means for a box in $\dg'(\mu)$ to be a \emph{trivial pair}, the following definition mirrors the definition of a trivial pair in the \MLQ model. 
\begin{definition}
    Suppose $\ws$ is a nonattacking filling of $\wdg(\mu)$. A box $u \in \dg'(\mu)$ is a \emph{trivial pair} of $\ws$ if $u$ and $d(u)$ have the same entry; that is, $\ws(u) = \ws(d(u))$.
\end{definition}
\noindent For example, the trivial pairs of the nonattacking filling boxes in Figure~\ref{fig:ExOfFillings} are boxes $(2,1)$, $(3,1)$, $(3,2)$, $(7,2)$, $(8,3)$, and $(6,4)$. 

\subsection{Mixed nonattacking Fillings} \label{section:MNF}

Recall that $\wdg(\mu)$ is defined to be a set of coordinate pairs $\{(i,j) \mid 1 \le i \le \ell(\mu), 0 \le j \le \lambda_i\}$. We define $\mathcal{P}(\wdg(\mu))$ to be the power set of $\wdg(\mu)$.   
\begin{definition}
    Let $\mu$ be a composition and $R: \dg'(\mu)\to \mathcal{P}(\wdg(\mu))$ be a function. We say that $R$ is a \emph{restriction function} of $\dg'(\mu)$ if, for all $u \in \dg'(\mu)$, the boxes in $R(u)$ are all in the row below and to the left of $u$; that is, if $u = (i,j)$ and $v = (i',j') \in R(u)$, then $i' < i$ and $j' = j-1$. We say that the set $R(u)$  \emph{restricts} $u$, and it is called a \emph{restriction set} of $u$.
\end{definition}
\begin{remark}
    It is possible for $R(u)$ to be equal to the empty set.
\end{remark}
\begin{definition}
    Given a restriction function $R$ of $\dg'(\mu)$, a nonattacking filling $\ws$ is a \emph{mixed nonattacking filling for $R$} if $\ws(u) \neq \ws(v)$ for all $v \in R(u)$. The set of all mixed fillings is denoted by $\MF(\mu,R(u))$, or simply $\MF(\mu)$ if $R$ is clear from context.
\end{definition}

\begin{remark}
Different restriction functions lead to different subsets of nonattacking fillings. For example, if $R(u) = \emptyset$ for all $u \in \dg'(\mu)$, then $\MF(\mu)$ is the set of all nonattacking fillings. However, if $R(u)$ is non-empty for some $u \in \dg'(\mu)$, then the set of mixed fillings is a strict subset of the set of nonattacking fillings.
\end{remark}

In Figure~\ref{fig:ExMixedNF}, we show an example of a mixed nonattacking filling of $\mu = (0,2,2,2,0,4,2,4)$,  with the restriction function $R(4,2) = \{(3,1), (2,1)\}$, $R(7,2) = \{(6,1), (3,1)\}$, and $R(u) = \emptyset$ for any other $u \in \dg'(\mu)$.
\begin{figure}[h!]
    \centering
    \begin{tikzpicture}[scale=0.75]
    \begin{scope}
        \draw [thick] (0,0) -- (8,0);
        
        \draw [thick] (0,1) -- (8,1);
        \draw [thick] (1,2) -- (4,2);
        \draw [thick] (5,2) -- (8,2);
        \draw [thick] (1,3) -- (4,3);
        \draw [thick] (5,3) -- (8,3);
        \draw [thick] (5,4) -- (6,4);
        \draw [thick] (7,4) -- (8,4);
        \draw [thick] (5,5) -- (6,5);
        \draw [thick] (7,5) -- (8,5);

        \draw [thick] (0,0) -- (0,1);
        \draw [thick] (1,0) -- (1,3);
        \draw [thick] (2,0) -- (2,3);
        \draw [thick] (3,0) -- (3,3);
        \draw [thick] (4,0) -- (4,3);
        \draw [thick] (5,0) -- (5,5);
        \draw [thick] (6,0) -- (6,5);
        \draw [thick] (7,0) -- (7,5);
        \draw [thick] (8,0) -- (8,5);

        \node at (0.5,0.5) {$1$};
        \node at (1.5,0.5) {$2$};
        \node at (2.5,0.5) {$3$};
        \node at (3.5,0.5) {$4$};
        \node at (4.5,0.5) {$5$};
        \node at (5.5,0.5) {$6$};
        \node at (6.5,0.5) {$7$};
        \node at (7.5,0.5) {$8$};

        \node at (1.5,1.5) {$2$};
        \node at (2.5,1.5) {$3$};
        \node at (3.5,1.5) {$4$};
        \node at (5.5,1.5) {$5$};
        \node at (6.5,1.5) {$6$};
        \node at (7.5,1.5) {$8$};

        \node at (1.5,2.5) {$2$};
        \node at (2.5,2.5) {$1$};
        \node at (3.5,2.5) {$7$};
        \node at (5.5,2.5) {$4$};
        \node at (6.5,2.5) {$6$};
        \node at (7.5,2.5) {$8$};

        \node at (5.5,3.5) {$1$};
        \node at (7.5,3.5) {$8$};

        \node at (5.5,4.5) {$2$};
        \node at (7.5,4.5) {$3$};
        
        \end{scope}
    \end{tikzpicture}
    \caption{Example of a mixed nonattacking for $\mu = (0,2,2,2,0,4,2,4)$ with restriction function $R(4,2) = \{(3,1), (2,1)\}$, $R(7,2) = \{(6,1), (3,1)\}$, and 
$R(u) = \emptyset$ for any other $u \in \dg'(\mu)$.}
    \label{fig:ExMixedNF}
\end{figure}

We define additional statistics on mixed nonattacking fillings to emulate the effect of trivial pairs on multiline queues.
\begin{definition} \label{def:TrivialPairOfBox}
Let $\mu$ be a composition, and let $u\in \dg'(\mu)$. Consider a mixed nonattacking filling $\ws$ for a restriction function $R$. We say that a type II coinversion triple $\{v,u,d(u)\}$ is a \emph{trivial coinversion triple} if $d(v) \in R(u)$ and $v$ is a trivial pair. If $u \in \dg'(\mu)$ such that $\ws(u) \neq \ws(d(u))$, we define a \emph{trivial pair of $u$} as a trivial pair $v$ in $\dg'(\mu)$ such that $d(v) \in R(u)$.Then, denote by $t(u)$ the number of trivial pairs of $u$, and by $r(u)$ the number of trivial coinversion triples of the form $\{v,u,d(u)\}$.
\end{definition}

For example, consider the mixed nonattacking filling shown
in Figure~\ref{fig:ExMixedNF}. The box $u = (4,2)$ has
restriction set $\{(3,1),(2,1)\}$; hence, $t(u) = 1$ and
$r(u) = 1$. The box $v = (7,2)$ has restriction set $\{(6,1)
(6,1)\}$; hence, $t(v) = 0$ and $r(v) = 0$. All other boxes
have empty restriction sets, so they do not have any such
statistics to compute. 

\begin{remark}
    In the example shown in Figure~\ref{fig:ExMixedNF}, 
    all boxes in the restriction sets have a box above, but that is not necessary. It is possible that $v=(i',j-1) \in R(u)$ for some $u$ while $(i',j)$ is not in $\dg'(\mu)$.
\end{remark}

To define the $q,t$-weight of a mixed nonattacking filling,
we first define the $q,t$-weight of $u \in \dg'(\mu)$ with
entry $\ws(u)$. 
\begin{definition}
    Let $\ws$ be a mixed nonattacking filling of $\mu$ and
    with restriction set $R(u)$. Fix a box $u \in dg'(\mu)$. 
    The mixed $q,t$-weight of $u$ is
    \begin{equation*}
    \wt_m(u) = 
    \begin{cases}
        q^{l(u)+1}t^{c(u)-r(u)} \dfrac{1-t}{1-q^{l(u)+1}
        t^{a(u)+1-t(u)}} & \qquad \text{ if } \ws(u) > \ws(d(u)), \\
        t^{c(u)-r(u)} \dfrac{1-t}{1-q^{l(u)+1}
        t^{a(u)+1-t(u)}} & \qquad \text{ if } \ws(u) < \ws(d(u)), \\
    \end{cases}
    \end{equation*}
    where  $c(u)$ denotes the number of coinversion triples
    of the form $\{v,u,d(u)\}$ of either type. 
    Then, the $q,t$-weight of the mixed nonattacking 
    filling is 
    \begin{equation*} 
    \wt_m(\ws) = \prod_{\substack{u \in \dg'(\mu) \\ \ws(u) \neq
    \ws(d(u))}} \wt_{m}(u),
    \end{equation*}
    where the product is over all boxes in $\dg'(\mu)$.
\end{definition}

\begin{remark}
    If $R(u) = \emptyset$ for all $u \in \dg'(\mu)$, the mixed $q,t$-weight defined above coincides with the usual $q,t$-weight of a nonattacking filling (see Section~\ref{subsect:NFM}), and so we recover the nonattacking filling model of $\HHL$.
\end{remark}

Continuing with the mixed nonattacking filling shown in Figure~\ref{fig:ExMixedNF}, consider the box $u = (4,2)$. The 
$q,t$-weight of $u$ is 
$$\wt_m(u) = \frac{qt(1-t)}{1-qt^2},$$
and all the other boxes have the same weight as they have in the original nonattacking filling model; thus,
the $q,t$-weight of the mixed nonattacking filling is
$$\wt_m(\ws) = \frac{1-t}{(1-q^4t^5)}\frac{1-t}{1-q^2t^4}\frac{1-t}{1-qt}
\frac{qt(1-t)}{1-qt^2}\frac{1-t}{1-q^3t^5}\frac{1-t}{1-q^2t^2}\frac{q(1-t)}{1-qt}
\frac{1-t}{1-qt^2}.$$ 

\subsection{Restricted nonattacking Fillings} \label{section:RNF}

Let $\mu$ be a composition. We define the following
set of boxes for each $u \in \dg'(\mu)$.

\begin{definition} \label{def:Hit} 
    Given $u = (i,j) \in \dg'(\mu)$ a box $v = (i',j-1) \in dg'(\mu)$
    \emph{hits} $u$ if 
    \begin{enumerate}
        \item $i' < i$,
        \item $\mu_{i'} = \mu_{i}$, and
        \item $\mu_c = \mu_i$ or $\mu_c < j-1$ for all $i' < c < i$.
    \end{enumerate}
\end{definition}
In Figure~\ref{fig:ExHit}, we show the boxes that hit $u = (4,1)$ in blue, $v = (4,2)$ in
pink and blue, $w = (8,4)$ in green, all other $v \in \dg'(\mu)$ are not hit by any boxes.

\begin{figure}[H]
    \centering
    \begin{tikzpicture}[scale=0.75]
        \draw [thick] (0,0) -- (8,0);
        
        \draw [thick] (0,1) -- (8,1);
        \draw [thick] (1,2) -- (4,2);
        \draw [thick] (5,2) -- (8,2);
        \draw [thick] (1,3) -- (4,3);
        \draw [thick] (5,3) -- (8,3);
        \draw [thick] (5,4) -- (6,4);
        \draw [thick] (7,4) -- (8,4);
        \draw [thick] (5,5) -- (6,5);
        \draw [thick] (7,5) -- (8,5);

        \draw [thick] (0,0) -- (0,1);
        \draw [thick] (1,0) -- (1,3);
        \draw [thick] (2,0) -- (2,3);
        \draw [thick] (3,0) -- (3,3);
        \draw [thick] (4,0) -- (4,3);
        \draw [thick] (5,0) -- (5,5);
        \draw [thick] (6,0) -- (6,5);
        \draw [thick] (7,0) -- (7,5);
        \draw [thick] (8,0) -- (8,5);

        \node at (3.5,2.5) {$\mathbf{u}$};
        \fill[red!20] (1,1) rectangle (3,2);
        \draw[thick] (1,1) grid (3,2);
        \node at (2.5,2.5) {$\mathbf{v}$};
        \fill[blue!20] (1,1) rectangle (2,2);
        \draw[thick] (1,1) grid (2,2);
        \node at (7.5,4.5) {$\mathbf{w}$};
        \fill[green!20] (5,3) rectangle (6,4);
        \draw[thick] (5,3) grid (6,4);
    \end{tikzpicture}
    \caption{Depicts the boxes that hit $u$, $v$, and $w$.}
    \label{fig:ExHit}
\end{figure}

We define restricted nonattacking fillings as a specific set
of mixed nonattacking fillings.
\begin{definition} 
Define the restriction function 
$R: \dg'(\mu) \to \mathcal{P}(\wdg (\mu))$ via 
$$
R(u) := \{v \in \wdg(\mu)\ \mid\ v \text{ hits } u \}.
$$
We say that a nonattacking filling is a \emph{restricted nonattacking filling} if it is a
mixed nonattacking filling for $R$. The set of all restricted nonattacking fillings of $\mu$ is denoted by $\mathcal{R}(\mu)$.
\end{definition}

In Figure~\ref{fig:ExRNF}, we show an
example of a restricted nonattacking filling for the partition
$\mu = (2,2,2,0,0)$.
\begin{figure}[h!]
    \centering
    \begin{tikzpicture}[scale=0.75]
        \draw [thick] (0,0) -- (6,0);
        
        \draw [thick] (0,1) -- (6,1);
        \draw [thick] (0,2) -- (4,2);
        \draw [thick] (0,3) -- (4,3);

        \draw [thick] (0,0) -- (0,3);
        \draw [thick] (1,0) -- (1,3);
        \draw [thick] (2,0) -- (2,3);
        \draw [thick] (3,0) -- (3,3);
        \draw [thick] (4,0) -- (4,3);
        \draw [thick] (5,0) -- (5,1);
        \draw [thick] (6,0) -- (6,1);

        \node at (0.5,0.5) {$1$};
        \node at (1.5,0.5) {$2$};
        \node at (2.5,0.5) {$3$};
        \node at (3.5,0.5) {$4$};
        \node at (4.5,0.5) {$5$};
        \node at (5.5,0.5) {$6$};
        
        \node at (0.5,1.5) {$1$};
        \node at (1.5,1.5) {$2$};
        \node at (2.5,1.5) {$3$};
        \node at (3.5,1.5) {$4$};

        \node at (0.5,2.5) {$6$};
        \node at (1.5,2.5) {$2$};
        \node at (2.5,2.5) {$3$};
        \node at (3.5,2.5) {$5$};
    \end{tikzpicture}
    \caption{A restricted nonattacking filling of the augmented diagram 
    of $\mu = (2,2,2,0,0)$.}
    \label{fig:ExRNF}
\end{figure}

In fact, if $\mu$ is a partition, then there is a bijection from restricted nonattacking fillings to multiline queues that is not weight preserving. If Conjecture~\ref{conj:MainConj} is true, then there should be a weight-preserving bijection. Unfortunately, we have not been able to find one yet. 
\begin{prop}
    If $\mu$ is a partition, then the number of restricted nonattacking fillings equals the number of multiline queues.
\end{prop}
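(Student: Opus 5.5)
The natural approach is to compare the two models row by row, identifying row $j$ of a filling with row $j$ of a ball system. Let $\lambda$ be a partition with $L=\lambda_1$, and write $m_j=\#\{i : \lambda_i\ge j\}$. Because $\lambda$ is weakly decreasing, row $j$ of $\dg'(\lambda)$ consists of the boxes in columns $1,\dots,m_j$. In a multiline queue of type $\lambda$, row $j$ contains exactly the balls whose labels are at least $j$, so it also has $m_j$ balls. The intended dictionary is:
\begin{itemize}
\item the \emph{set} of entries in row $j$ of $\ws$ becomes the set of occupied positions in row $j$ of the ball system;
\item the column of $\dg'(\lambda)$ in which an entry sits records which strand passes through that ball.
\end{itemize}
Column $i$ has height $\lambda_i$ and basement entry $i$, so it is read as the strand of label $\lambda_i$ that ends at position $i$ in row $1$. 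This is consistent with the type $\lambda$, and the basement plays the role of the row-$1$ labels.

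The first step is to make the restriction explicit in the partition case. For a partition, every column strictly between two columns of equal height $\lambda_{i'}=\lambda_i$ also has height $\lambda_i$. Hence $v=(i',j-1)$ hits $u=(i,j)$ exactly when $i'<i$ and $i'$ lies in the same block of equal parts as $i$. Combining this with the nonattacking condition (no repeated entry in a row, and $u$ differs from every box of row $j-1$ to its right), an entry $\ws(u)$ must satisfy one of:
\begin{itemize}
\item it equals $\ws(d(u))$, which gives a trivial pair;
\item it equals the entry of a box in row $j-1$ lying in a strictly taller block to the left;
\item it is a value not appearing in row $j-1$ at all.
\end{itemize}
I would then show this is precisely the local freedom of an MLQ pairing step, processed from the top row downward. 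A trivial pair is forced when the ball below is free. A nontrivial strand for label $\lambda_i$ connects to a ball that either belongs to a larger-label strand already placed, or is a new ball not yet used. The remaining freedom, namely which of the allowed values is used, should match the choice of where the new ball sits.

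The core is a lemma of the following form. Fix the row $j-1$ of $\ws$ (equivalently, row $j-1$ of the MLQ together with its strand labelling) and a candidate set $S_j\subseteq[n]$ with $|S_j|=m_j$. Then there is exactly one restricted assignment of $S_j$ to the boxes of row $j$ exactly when $S_j$ is a valid ball configuration for row $j$ whose queue pairing, read in the MLQ order, reproduces the given labelling of row $j-1$. Granting the lemma, induction on $j$ from $1$ to $L$ gives a bijection $\mathcal{R}(\lambda)\to\MLQ(\lambda)$ that sends $\ws$ to its sequence of row sets. This is only a bijection of sets, consistent with the paper's remark that it does not preserve weight.

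The main obstacle is the uniqueness and existence part of this lemma. Within a block of equal heights, the restricted condition must force the assignment to follow the MLQ reading order: decreasing labels, right to left within a label, with cyclic wrap-around. I would try a greedy argument first. Place the taller blocks before the shorter ones, and within a block scan from right to left. Show that the nonattacking and restriction constraints leave exactly one admissible value at each step, and that this value is the endpoint of the shortest cyclic strand. If that matching proves too delicate, my fallback is a counting argument. I would show that both sides satisfy the same row-by-row product formula, by counting, for each fixed row $j-1$, the admissible row-$j$ configurations on each side, and then multiply over rows.
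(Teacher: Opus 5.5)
There is a genuine gap. Your core lemma asserts that, once row $j-1$ is fixed, a row set $S_j$ admits at most one restricted assignment, so that $\ws$ is determined by its sequence of row sets. This is false. The greedy ``shortest cyclic strand'' picture behind it describes the deterministic queues of the $t=0$ specialization, not the model used here. In the $q,t$ multiline queues, a ball that is not trivially paired may be matched with \emph{any} free ball in the row below; that freedom is exactly what the factor $t^{\#\text{skipped}}$ records. So a multiline queue is a ball system \emph{together with} a pairing.

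Concretely, take $\lambda=(2,2,0,0)$. Row $1$ of every filling is forced to be $\ws(1,1)=1$, $\ws(2,1)=2$. Besides distinctness in row $2$, the only constraints on row $2$ are $\ws(1,2)\neq 2$ and $\ws(2,2)\neq 1$. Hence both $(\ws(1,2),\ws(2,2))=(3,4)$ and $(4,3)$ are restricted nonattacking fillings, and they have the same row set $\{3,4\}$. On the queue side, in the ball system with row-$2$ balls in columns $3$ and $4$, neither ball sits above a ball. The ball in column $4$ may therefore pair with either ball of row $1$, giving two different queues of type $\lambda$. Altogether there are $7$ restricted fillings but only $\binom{4}{2}=6$ ball systems. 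So neither the map to row sets nor a fallback count of ball configurations row by row can prove the statement.

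What is missing is the strand information you identified in your first bullet and then discarded. The bijection should send $Q$ to the filling with $\ws(i,j)$ equal to the column of the row-$j$ ball on the strand ending at column $i$. The inverse should join the ball in column $\ws(i,j)$ of row $j$ to the ball in column $\ws(i,j-1)$ of row $j-1$; this is the paper's argument.

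Your three local options are also misread. They constrain the ball lying \emph{directly below} the row-$j$ ball of strand $i$, not the ball that strand connects to. The latter is $\ws(i,j-1)$, which no other strand uses, so it is automatically free; in particular a strand never ``connects to a ball of a larger-label strand.'' What must be checked is that the options say the ball below is one of three things:
\begin{itemize}
\item on strand $i$ itself (a trivial pair);
\item on a strand of strictly larger label that was paired earlier, hence not free;
\item absent.
\end{itemize}
This is precisely the trivial-pairing rule (pair straight down exactly when the ball below is still free), and no uniqueness statement is needed.
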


\begin{proof}
    Let $Q \in \MLQ(\mu)$ such that $Q$ has type $\mu = (\mu_1, \ldots, \mu_k)$. For each ball in row $1$ and column $i$, there is a sequence of paired balls $b_1^i, b_2^i, \ldots b_{\mu_i}^i$ where $b_j^i$ is in row $j$ and columns $c_1^i = i, c_2^i, \ldots, c_{\mu_i}^i$ respectively. The superscript $i$ denotes the sequence of balls that connect to the $i$th ball in row 1 of the multiline queue. 
    
    We define a map $\ws: \wdg(\mu) \to [n]$ via $\ws(i,j) = c_{j}^i$ and $\ws(i,0) = i$, where $1 \le i \le k$ and $1 \le j \le \mu_i$. We claim that $\ws$ is a restricted nonattacking filling. To prove this claim, we have to show that if $u$ and $v$ are attacking or hitting in $\wdg$, then $\ws(u) \neq \ws(v)$. Notice that each column $c_j^i$ appears in at most one sequence of strands; hence, $\ws(u) \neq \ws(v)$ when $u$ and $v$ are in the same row. It remains to show that, if $v$ is in the row below $u$, then $\ws(u) \neq \ws(v)$ unless $v$ is to the left of $u$ and the column containing $v$ is larger than the column containing $u$.
    
    Suppose $u = (i,j)$ and $v = (i',j-1)$. If there is not a ball in row $j-1$ and column $c_{j}^i$, then $\ws(v) \neq c_{j}^i$ for any box in row $j-1$; hence, $\ws(v) \neq \ws(u)$. Otherwise, assume there is a ball $b'$ directly beneath the ball $b$ in row $j$ and column $c_{j}^i$.    
    There are two possibilities when pairing $b$: first, $b'$ is unpaired or, second, $b'$ has already been paired. In the first case, if $b'$ is unpaired, then $b$ and $b'$ form a trivial pair and those balls must pair. As a result, $\ws(i,j) = \ws(i,j-1)$ and it is not possible for a different box in row $j$ of $\wdg(\mu)$ to have the entry $c_{j}^i$. In the second case, the strand that contains $b'$ must have paired before the pairing step for $b$. This is only possible if the label of $b'$ is strictly greater than the label of $b$. Then the strand containing $b'$ would map to a column $'$ to the left of $i$ such that $\mu_{i'} > \mu_i$. By the definition of a restricted nonattacking filling, $\ws(i,j) = \ws(i',j-1)$ is allowed if $i' < i$ and $\mu_{i'} > \mu_i$. There are no other possible pairings; hence, this shows that $\ws$ is a restricted nonattacking filling.

    The inverse map takes a restricted nonattacking filling $\ws$ and constructs a multiline $Q$ by placing a ball in row $j$ and column $\ws(i,j)$ for each box $(i,j) \in \dg'(\mu)$. Then we pair a ball $b$ in row $j$ and column $\ws(i,j)$ to the ball $b'$ in row $j-1$ and column $\ws(i,j-1)$. The resulting multiline queue $Q$ has type $\mu$, and the map is well-defined.  
\end{proof}

\subsection{Compression Formula for nonattacking Fillings} \label{section:CFNF}

In this section, we present our conjecture that the restricted nonattacking filling model together with the mixed $q,t$-weight forms a combinatorial model for the nonsymmetric Macdonald polynomials. We also give a sketch of the proof of Theorem~\ref{thm:MainTheorem}. Additionally, we discuss the compositions for which we have verified the conjecture. Then, we provide some examples of restricted nonattacking fillings and the corresponding compression of nonattacking fillings in Figures~\ref{fig:RNF(0,2,2,2)} and~\ref{fig:CompressionExample}, respectively. We begin by recalling the conjecture.
\MainConjecture*

This conjecture has been verified by computer for all compositions with at most 6 parts and all parts less than or equal to 3, and for compositions with at most 10 parts and largest part being 1. More generally, we have the following result for a larger family of compositions.

\begin{theorem} \label{thm:MainResult}
Suppose $\mu = (\mu_1, \ldots, \mu_n)$ is a composition such that each box $u \in \dg'(\mu)$ hits at most one box. Then, $$E_{\mu} = \sum_{\ws \in \mathcal{R}(\mu)} \wt_{m}(\ws)x^{\sigma}.$$
\end{theorem}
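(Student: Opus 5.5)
The plan is to start from the HHL formula (Theorem 3.5.1 of HHL08) and compress the set $\NAF(\mu)$ down to $\mathcal{R}(\mu)$ one restriction at a time, passing through mixed nonattacking fillings. Order the pairs $(v,u)$ with $v$ hitting $u$; by hypothesis each box $u$ hits at most one box, so the restriction function grows by adding one such pair at a time. Let $R_0\equiv\emptyset$, and let $R_k$ be obtained from $R_{k-1}$ by adding $d(v')$ to $R(u)$ for the $k$-th pair. The final $R_N$ is the restriction function defining $\mathcal{R}(\mu)$. Since $R_0$ recovers the HHL weight (the remark after the definition of mixed weight), it suffices to prove, for each $k$, that
\[
\sum_{\ws\in \MF(\mu,R_{k-1})}\wt_m(\ws)x^{\sigma}=\sum_{\ws\in \MF(\mu,R_k)}\wt_m(\ws)x^{\sigma}.
\]

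For a single step, fix $u=(i,j)$ and a box $w=(i',j-1)$ being added to $R(u)$. The fillings in $\MF(\mu,R_{k-1})\setminus\MF(\mu,R_k)$ are exactly those with $\ws(u)=\ws(w)$. I would pair each such filling $\ws$ with fillings in which the roles of the relevant consecutive boxes are exchanged, via an operator $\T{i}{j}$. This operator swaps the entries of $u$ and the box above $w$ (or the column segments above row $j-1$ in columns $i'$ and $i$). The hitting conditions ($\mu_{i'}=\mu_i$, and intermediate columns either of equal height or shorter than $j-1$) are exactly what make the two columns interchangeable: arms and legs agree, and intermediate columns do not create new attacks. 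The resulting class consists of a filling with $u$ a nontrivial pair over an entry equal to $\ws(w)$, together with its image, in which the box above $w$ is a trivial pair. The target identity is then a two-term (or small-class) rational identity, of the same shape as the Hecke-type relation
\[
\frac{1-t}{1-q^{l+1}t^{a+1}}\ \text{vs.}\ \frac{1-t}{1-q^{l+1}t^{a}},
\]
in which dropping one from $a(u)+1$ is exactly the $-t(u)$ correction. The factor $t^{-r(u)}$ accounts for the type II coinversion triple $\{v,u,d(u)\}$ that disappears when $v$ is trivial. I would verify this by tracking $\maj$ and $\coinv$ under $\T{i}{j}$ box by box.

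The key lemmas are therefore:
\begin{enumerate}
\item $\T{i}{j}$ preserves the nonattacking property and the earlier restrictions $R_{k-1}$, and is an involution on the appropriate set.
\item $\maj$ changes only at $u$ and the box above $w$.
\item $\coinv$ changes only in triples involving these two boxes, with all other triples matched bijectively.
\end{enumerate}
Since $x^\sigma$ is preserved (entries are only permuted between boxes), the step identity follows from these lemmas plus the rational identity.

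The main obstacle is item 3. Swapping column segments can change type I and type II triples with third boxes in intermediate columns and in columns to the right of $i$. The case analysis must show that these contributions cancel in pairs, using condition (3) of Definition~\ref{def:Hit}. It must also show that the extra weights $t(u)$ and $r(u)$ introduced at earlier steps are not disturbed. This is exactly where ``hits at most one box'' is needed: if $u$ could be hit by two boxes, or a box were the restriction of several boxes, a single swap would simultaneously alter several $t(\cdot),r(\cdot)$ statistics and the two-term cancellation would break. I would first handle the case $j-1=0$ (basement restrictions), then induct on rows from the top of the columns downward, so that the segments above have already been compressed.
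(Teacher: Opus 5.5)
Your architecture is the paper's. You start from HHL with $R\equiv\emptyset$ and add one hitting box at a time. Each filling removed by the new restriction is paired with the retained filling in which the left box in row $j$ is a trivial pair. You then verify $\wt_m(\ws)+\wt_m(\ws')=\wt_{m'}(\ws)$ using $t'=t+1$ at the right box and the factor $t^{-\delta}$. However, two of your concrete steps fail as stated, and repairing them is exactly the content of Section~\ref{section:Operator} and Definition~\ref{def:SpecialRestFunc}.

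\emph{The operator.} Neither of your two descriptions works.

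Swapping only the two row-$j$ entries can create an attack. Take $\mu=(0,2,2)$ at the step adding $(2,0)$ to $R(3,1)$, so $(2,1)\in R(3,2)$ is already in place. The filling with $(\ws(2,1),\ws(3,1))=(2,1)$ and $(\ws(2,2),\ws(3,2))=(2,3)$ becomes rows $(1,2)$, $(2,3)$, and the $2$ in $(2,2)$ attacks the $2$ in $(3,1)$.

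Swapping the whole column segments stays nonattacking but produces the wrong partner. Take $\mu=(0,0,0,2,2)$ at the step adding $(4,0)$ to $R(5,1)$, with rows $(\ws(4,1),\ws(5,1))=(4,3)$ and $(\ws(4,2),\ws(5,2))=(1,2)$. The correct partner is rows $(3,4)$, $(1,2)$. The full swap instead gives rows $(3,4)$, $(2,1)$, whose weight is exactly $t$ times that of the correct partner, because $\{(4,2),(5,2),(5,1)\}$ becomes a coinversion triple. Your two-term identity is then false for that pair.

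The paper's $\T{i}{j}$ swaps row $j$ and then climbs one row at a time. It swaps the next row only when forced: either there is a trivial pair directly below in one of the two columns, or the type~II triple formed by the next row and the new entry below its right box would otherwise change orientation. It stops at the first row where neither happens. This stopping rule is what makes the row above the last swap contribute nothing (Lemma~\ref{lem:OperatorTop}). It also makes each swapped middle row contribute nothing except through trivial pairs (Lemma~\ref{lem:OperatorMiddle}).

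\emph{The rows above $j$.} Your item 3, together with the claim that the earlier $t(\cdot)$, $r(\cdot)$ are not disturbed, does not describe what happens. When the swap passes through a row containing a trivial pair, that trivial pair moves to the other column. Two things follow:
\begin{itemize}
\item Raw $\coinv$ can change by $\pm1$ in the triple $\{(i,r),(i',r),(i',r-1)\}$ (cases (3) and (4) of Lemma~\ref{lem:OperatorMiddle}); for $r\ge j+2$ this triple does not involve row $j$ at all.
\item The box carrying the $t$-shifted denominator changes. In the $(0,2,2)$ example, the partner of rows $(2,1)$, $(2,3)$ is $(1,2)$, $(3,2)$. There $(3,2)$ goes from nontrivial, with denominator $1-qt^{a(3,2)+1-t(3,2)}=1-qt$, to trivial, while $(2,2)$ becomes nontrivial with denominator $1-qt$.
\end{itemize}
What the paper proves is that these changes cancel exactly: the $r(\cdot)$ corrections absorb the raw coinversion changes, and the $t(\cdot)$ shifts make the denominators match. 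This requires every box of the right column above row $j$ to be already restricted by the box below-left of it, which is condition (1) of a special restriction function. Hence the order must be strictly top-down in each column pair, with the basement restriction added last. Your suggestion to handle $j-1=0$ first breaks this.

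One further correction to your identification of the main obstacle. Under your hypothesis the intermediate columns have height at most $j-2$, so third boxes there never enter. The delicate triples are those between the two columns, and, at row $j$ and at the first unswapped row, those with outside columns.
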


The compositions $\mu$ included in Theorem~\ref{thm:MainResult} are those that, for any group of three equal parts $\mu_i = \mu_j = \mu_k$ ($i < j < k$), there is at least one part $\mu_c \ge \mu_i - 1$ where $i < c < j$ or $j < c < k$. Thus, the group of three parts is separated from each other, guaranteeing that each box hits at most 1 box. For example, the composition $(0,3,0,3,2,2,4,0,2,2,4)$ satisfies the conditions for Theorem~\ref{thm:MainResult}. However, the composition $(3,3,0,3,2,2,4,0,2,2,4)$ does not satisfy the conditions since box $(4,3)$ is hit by boxes $(1,2)$ and $(2,2)$. 

Before discussing the outline of the proof of our main result, we present an example. In Figure~\ref{fig:RNF(0,2,2,2)} we show all the restricted nonattacking fillings for the composition $\mu = (0,2,2,2)$ together with their mixed $q,t$-weight and corresponding monomial. Note that there are 64 nonattacking fillings and only 16 restricted nonattacking fillings; thus, there are far fewer restricted nonattacking fillings than the general nonattacking fillings. We have verified that the corresponding generating function for this example is equal to $E_{(0,2,2,2)}$.

\begin{figure}[ht]
    \centering
    \begin{tikzpicture}[scale=0.5]
        \begin{scope}[xshift=-16cm, yshift=0]
            \draw[thick] (0,0) grid (4,1);
            \draw[thick] (1,1) grid (4,3);

            \node[] at (0.5,0.5) {$1$};
            \node[] at (1.5,0.5) {$2$};
            \node[] at (2.5,0.5) {$3$};
            \node[] at (3.5,0.5) {$4$};

            \node[] at (1.5,1.5) {$2$};
            \node[] at (2.5,1.5) {$3$};
            \node[] at (3.5,1.5) {$4$};

            \node[] at (1.5,2.5) {$2$};
            \node[] at (2.5,2.5) {$3$};
            \node[] at (3.5,2.5) {$4$};

            \node[] at (2,-1) {$x_2^2x_3^2x_4^2$};
        \end{scope}
        \begin{scope}[xshift=-8cm, yshift=0]
            \draw[thick] (0,0) grid (4,1);
            \draw[thick] (1,1) grid (4,3);

            \node[] at (0.5,0.5) {$1$};
            \node[] at (1.5,0.5) {$2$};
            \node[] at (2.5,0.5) {$3$};
            \node[] at (3.5,0.5) {$4$};

            \node[] at (1.5,1.5) {$2$};
            \node[] at (2.5,1.5) {$3$};
            \node[] at (3.5,1.5) {$4$};

            \node[] at (1.5,2.5) {$2$};
            \node[] at (2.5,2.5) {$3$};
            \node[] at (3.5,2.5) {$1$};

            \node[] at (2,-1) {$\frac{1-t}{1-qt}x_1x_2^2x_3^2x_4$};
        \end{scope}
        \begin{scope}[xshift=0cm, yshift=0]
            \draw[thick] (0,0) grid (4,1);
            \draw[thick] (1,1) grid (4,3);

            \node[] at (0.5,0.5) {$1$};
            \node[] at (1.5,0.5) {$2$};
            \node[] at (2.5,0.5) {$3$};
            \node[] at (3.5,0.5) {$4$};

            \node[] at (1.5,1.5) {$2$};
            \node[] at (2.5,1.5) {$3$};
            \node[] at (3.5,1.5) {$4$};

            \node[] at (1.5,2.5) {$2$};
            \node[] at (2.5,2.5) {$1$};
            \node[] at (3.5,2.5) {$4$};

            \node[] at (2,-1) {$\frac{1-t}{1-qt}x_1x_2^2x_3x_4^2$};
        \end{scope}
        \begin{scope}[xshift=8cm, yshift=0]
            \draw[thick] (0,0) grid (4,1);
            \draw[thick] (1,1) grid (4,3);

            \node[] at (0.5,0.5) {$1$};
            \node[] at (1.5,0.5) {$2$};
            \node[] at (2.5,0.5) {$3$};
            \node[] at (3.5,0.5) {$4$};

            \node[] at (1.5,1.5) {$2$};
            \node[] at (2.5,1.5) {$3$};
            \node[] at (3.5,1.5) {$4$};

            \node[] at (1.5,2.5) {$1$};
            \node[] at (2.5,2.5) {$3$};
            \node[] at (3.5,2.5) {$4$};

            \node[] at (2,-1) {$\frac{1-t}{1-qt}x_1x_2x_3^2x_4^2$};
        \end{scope}
        
        \begin{scope}[xshift=-16cm, yshift=-6cm]
            \draw[thick] (0,0) grid (4,1);
            \draw[thick] (1,1) grid (4,3);

            \node[] at (0.5,0.5) {$1$};
            \node[] at (1.5,0.5) {$2$};
            \node[] at (2.5,0.5) {$3$};
            \node[] at (3.5,0.5) {$4$};

            \node[] at (1.5,1.5) {$2$};
            \node[] at (2.5,1.5) {$3$};
            \node[] at (3.5,1.5) {$1$};

            \node[] at (1.5,2.5) {$4$};
            \node[] at (2.5,2.5) {$3$};
            \node[] at (3.5,2.5) {$1$};

            \node[] at (2,-1) {$\frac{q(1-t)^2}{(1-qt)(1-q^2t)}x_1^2x_2x_3^2x_4$};
        \end{scope}
        \begin{scope}[xshift=-8cm, yshift=-6cm]
            \draw[thick] (0,0) grid (4,1);
            \draw[thick] (1,1) grid (4,3);

            \node[] at (0.5,0.5) {$1$};
            \node[] at (1.5,0.5) {$2$};
            \node[] at (2.5,0.5) {$3$};
            \node[] at (3.5,0.5) {$4$};

            \node[] at (1.5,1.5) {$2$};
            \node[] at (2.5,1.5) {$3$};
            \node[] at (3.5,1.5) {$1$};

            \node[] at (1.5,2.5) {$2$};
            \node[] at (2.5,2.5) {$4$};
            \node[] at (3.5,2.5) {$1$};

            \node[] at (2,-1) {$\frac{q(1-t)^2}{(1-qt)(1-q^2t)}x_1^2x_2^2x_3x_4$};
        \end{scope}
        \begin{scope}[xshift=0cm, yshift=-6cm]
            \draw[thick] (0,0) grid (4,1);
            \draw[thick] (1,1) grid (4,3);

            \node[] at (0.5,0.5) {$1$};
            \node[] at (1.5,0.5) {$2$};
            \node[] at (2.5,0.5) {$3$};
            \node[] at (3.5,0.5) {$4$};

            \node[] at (1.5,1.5) {$2$};
            \node[] at (2.5,1.5) {$3$};
            \node[] at (3.5,1.5) {$1$};

            \node[] at (1.5,2.5) {$2$};
            \node[] at (2.5,2.5) {$3$};
            \node[] at (3.5,2.5) {$4$};

            \node[] at (2,-1) {$\frac{q(1-t)^2}{(1-qt)(1-q^2t)}x_1x_2^2x_3^2x_4$};
        \end{scope}
        \begin{scope}[xshift=8cm, yshift=-6cm]
            \draw[thick] (0,0) grid (4,1);
            \draw[thick] (1,1) grid (4,3);

            \node[] at (0.5,0.5) {$1$};
            \node[] at (1.5,0.5) {$2$};
            \node[] at (2.5,0.5) {$3$};
            \node[] at (3.5,0.5) {$4$};

            \node[] at (1.5,1.5) {$2$};
            \node[] at (2.5,1.5) {$3$};
            \node[] at (3.5,1.5) {$1$};

            \node[] at (1.5,2.5) {$2$};
            \node[] at (2.5,2.5) {$3$};
            \node[] at (3.5,2.5) {$1$};

            \node[] at (2,-1) {$\frac{1-t}{1-q^2t}x_1^2x_2^2x_3^2$};
        \end{scope}
        
        \begin{scope}[xshift=-16cm, yshift=-12cm]
            \draw[thick] (0,0) grid (4,1);
            \draw[thick] (1,1) grid (4,3);

            \node[] at (0.5,0.5) {$1$};
            \node[] at (1.5,0.5) {$2$};
            \node[] at (2.5,0.5) {$3$};
            \node[] at (3.5,0.5) {$4$};

            \node[] at (1.5,1.5) {$2$};
            \node[] at (2.5,1.5) {$1$};
            \node[] at (3.5,1.5) {$4$};

            \node[] at (1.5,2.5) {$3$};
            \node[] at (2.5,2.5) {$1$};
            \node[] at (3.5,2.5) {$4$};

            \node[] at (2,-1) {$\frac{q(1-t)^2}{(1-qt)(1-q^2t)}x_1^2x_2x_3x_4^2$};
        \end{scope}
        \begin{scope}[xshift=-8cm, yshift=-12cm]
            \draw[thick] (0,0) grid (4,1);
            \draw[thick] (1,1) grid (4,3);

            \node[] at (0.5,0.5) {$1$};
            \node[] at (1.5,0.5) {$2$};
            \node[] at (2.5,0.5) {$3$};
            \node[] at (3.5,0.5) {$4$};

            \node[] at (1.5,1.5) {$2$};
            \node[] at (2.5,1.5) {$1$};
            \node[] at (3.5,1.5) {$4$};

            \node[] at (1.5,2.5) {$2$};
            \node[] at (2.5,2.5) {$3$};
            \node[] at (3.5,2.5) {$4$};

            \node[] at (2,-1) {$\frac{q(1-t)^2}{(1-qt)(1-q^2t)}x_1x_2^2x_3x_4^2$};
        \end{scope}
        \begin{scope}[xshift=0cm, yshift=-12cm]
            \draw[thick] (0,0) grid (4,1);
            \draw[thick] (1,1) grid (4,3);

            \node[] at (0.5,0.5) {$1$};
            \node[] at (1.5,0.5) {$2$};
            \node[] at (2.5,0.5) {$3$};
            \node[] at (3.5,0.5) {$4$};

            \node[] at (1.5,1.5) {$2$};
            \node[] at (2.5,1.5) {$1$};
            \node[] at (3.5,1.5) {$4$};

            \node[] at (1.5,2.5) {$2$};
            \node[] at (2.5,2.5) {$1$};
            \node[] at (3.5,2.5) {$4$};

            \node[] at (2,-1) {$\frac{1-t}{1-q^2t}x_1^2x_2^2x_4^2$};
        \end{scope}
        \begin{scope}[xshift=8cm, yshift=-12cm]
            \draw[thick] (0,0) grid (4,1);
            \draw[thick] (1,1) grid (4,3);

            \node[] at (0.5,0.5) {$1$};
            \node[] at (1.5,0.5) {$2$};
            \node[] at (2.5,0.5) {$3$};
            \node[] at (3.5,0.5) {$4$};

            \node[] at (1.5,1.5) {$2$};
            \node[] at (2.5,1.5) {$1$};
            \node[] at (3.5,1.5) {$4$};

            \node[] at (1.5,2.5) {$2$};
            \node[] at (2.5,2.5) {$1$};
            \node[] at (3.5,2.5) {$3$};

            \node[] at (2,-1) {$\frac{(1-t)^2}{(1-qt)(1-q^2t)}x_1^2x_2^2x_3x_4$};
        \end{scope}
        
        \begin{scope}[xshift=-16cm, yshift=-18cm]
            \draw[thick] (0,0) grid (4,1);
            \draw[thick] (1,1) grid (4,3);

            \node[] at (0.5,0.5) {$1$};
            \node[] at (1.5,0.5) {$2$};
            \node[] at (2.5,0.5) {$3$};
            \node[] at (3.5,0.5) {$4$};

            \node[] at (1.5,1.5) {$1$};
            \node[] at (2.5,1.5) {$3$};
            \node[] at (3.5,1.5) {$4$};

            \node[] at (1.5,2.5) {$1$};
            \node[] at (2.5,2.5) {$3$};
            \node[] at (3.5,2.5) {$2$};

            \node[] at (2,-1) {$\frac{q(1-t)^2}{(1-qt)(1-q^2t)}x_1x_2x_3^2x_4^2$};
        \end{scope}
        \begin{scope}[xshift=-8cm, yshift=-18cm]
            \draw[thick] (0,0) grid (4,1);
            \draw[thick] (1,1) grid (4,3);

            \node[] at (0.5,0.5) {$1$};
            \node[] at (1.5,0.5) {$2$};
            \node[] at (2.5,0.5) {$3$};
            \node[] at (3.5,0.5) {$4$};

            \node[] at (1.5,1.5) {$1$};
            \node[] at (2.5,1.5) {$3$};
            \node[] at (3.5,1.5) {$4$};

            \node[] at (1.5,2.5) {$1$};
            \node[] at (2.5,2.5) {$3$};
            \node[] at (3.5,2.5) {$4$};

            \node[] at (2,-1) {$\frac{1-t}{1-q^2t}x_1^2x_3^2x_4^2$};
        \end{scope}
        \begin{scope}[xshift=0cm, yshift=-18cm]
            \draw[thick] (0,0) grid (4,1);
            \draw[thick] (1,1) grid (4,3);

            \node[] at (0.5,0.5) {$1$};
            \node[] at (1.5,0.5) {$2$};
            \node[] at (2.5,0.5) {$3$};
            \node[] at (3.5,0.5) {$4$};

            \node[] at (1.5,1.5) {$1$};
            \node[] at (2.5,1.5) {$3$};
            \node[] at (3.5,1.5) {$4$};

            \node[] at (1.5,2.5) {$1$};
            \node[] at (2.5,2.5) {$3$};
            \node[] at (3.5,2.5) {$2$};

            \node[] at (2,-1) {$\frac{(1-t)^2}{(1-qt)(1-q^2t)}x_1^2x_2x_3^2x_4$};
        \end{scope}
        \begin{scope}[xshift=8cm, yshift=-18cm]
            \draw[thick] (0,0) grid (4,1);
            \draw[thick] (1,1) grid (4,3);

            \node[] at (0.5,0.5) {$1$};
            \node[] at (1.5,0.5) {$2$};
            \node[] at (2.5,0.5) {$3$};
            \node[] at (3.5,0.5) {$4$};

            \node[] at (1.5,1.5) {$1$};
            \node[] at (2.5,1.5) {$3$};
            \node[] at (3.5,1.5) {$4$};

            \node[] at (1.5,2.5) {$1$};
            \node[] at (2.5,2.5) {$2$};
            \node[] at (3.5,2.5) {$4$};

            \node[] at (2,-1) {$\frac{(1-t)^2}{(1-qt)(1-q^2t)}x_1^2x_2x_3x_4^2$};
        \end{scope}
    \end{tikzpicture}
    \caption{All restricted nonattacking fillings for the composition $\mu = (0,2,2,2)$ with their mixed weights.}
    \label{fig:RNF(0,2,2,2)}
\end{figure}

In general, the mixed $q,t$-weight of a restricted nonattacking filling encodes the sum of the $q,t$-weights of several nonattacking fillings. In this way, we view restricted nonattacking fillings as a compression of nonattacking fillings, see Figure~\ref{fig:CompressionExample} for an example. 

\begin{figure}[ht]
    \centering
    \begin{tikzpicture}[scale=0.5]
        \begin{scope}[xshift=-18cm, yshift=0cm]
            \draw[thick] (0,0) grid (4,1);
            \draw[thick] (1,1) grid (4,3);

            \node[] at (0.5,0.5) {$1$};
            \node[] at (1.5,0.5) {$2$};
            \node[] at (2.5,0.5) {$3$};
            \node[] at (3.5,0.5) {$4$};

            \node[] at (1.5,1.5) {$2$};
            \node[] at (2.5,1.5) {$1$};
            \node[] at (3.5,1.5) {$4$};

            \node[] at (1.5,2.5) {$2$};
            \node[] at (2.5,2.5) {$1$};
            \node[] at (3.5,2.5) {$3$};

            \node[] at (2,-1) {$\frac{t(1-t)^2}{(1-qt^3)(1-q^2t^2)}$};
        \end{scope}
        \begin{scope}[xshift=-10cm, yshift=0cm]
            \draw[thick] (0,0) grid (4,1);
            \draw[thick] (1,1) grid (4,3);

            \node[] at (0.5,0.5) {$1$};
            \node[] at (1.5,0.5) {$2$};
            \node[] at (2.5,0.5) {$3$};
            \node[] at (3.5,0.5) {$4$};

            \node[] at (1.5,1.5) {$2$};
            \node[] at (2.5,1.5) {$1$};
            \node[] at (3.5,1.5) {$4$};

            \node[] at (1.5,2.5) {$2$};
            \node[] at (2.5,2.5) {$3$};
            \node[] at (3.5,2.5) {$1$};

            \node[] at (2,-1) {$\frac{qt^3(1-t)^3}{(1-qt^2)(1-qt^3)
            (1-q^2t^2)}$};
        \end{scope}
        \begin{scope}[xshift=0cm, yshift=0cm]
            \draw[thick] (0,0) grid (4,1);
            \draw[thick] (1,1) grid (4,3);

            \node[] at (0.5,0.5) {$1$};
            \node[] at (1.5,0.5) {$2$};
            \node[] at (2.5,0.5) {$3$};
            \node[] at (3.5,0.5) {$4$};

            \node[] at (1.5,1.5) {$2$};
            \node[] at (2.5,1.5) {$1$};
            \node[] at (3.5,1.5) {$4$};

            \node[] at (1.5,2.5) {$3$};
            \node[] at (2.5,2.5) {$2$};
            \node[] at (3.5,2.5) {$1$};

            \node[] at (2,-1) {$\frac{q^2t^4(1-t)^4}{(1-qt)(1-qt^2)(1-qt^3)
            (1-q^2t^2)}$};
        \end{scope}
        \begin{scope}[xshift=10cm, yshift=0cm]
            \draw[thick] (0,0) grid (4,1);
            \draw[thick] (1,1) grid (4,3);

            \node[] at (0.5,0.5) {$1$};
            \node[] at (1.5,0.5) {$2$};
            \node[] at (2.5,0.5) {$3$};
            \node[] at (3.5,0.5) {$4$};

            \node[] at (1.5,1.5) {$2$};
            \node[] at (2.5,1.5) {$1$};
            \node[] at (3.5,1.5) {$4$};

            \node[] at (1.5,2.5) {$3$};
            \node[] at (2.5,2.5) {$1$};
            \node[] at (3.5,2.5) {$2$};

            \node[] at (2,-1) {$\frac{qt^2(1-t)^3}{(1-qt)(1-qt^3)(1-q^2t^2)}$};
        \end{scope}
        \begin{scope}[xshift=-18cm, yshift=-6cm]
            \draw[thick] (0,0) grid (4,1);
            \draw[thick] (1,1) grid (4,3);

            \node[] at (0.5,0.5) {$1$};
            \node[] at (1.5,0.5) {$2$};
            \node[] at (2.5,0.5) {$3$};
            \node[] at (3.5,0.5) {$4$};

            \node[] at (1.5,1.5) {$1$};
            \node[] at (2.5,1.5) {$2$};
            \node[] at (3.5,1.5) {$4$};

            \node[] at (1.5,2.5) {$1$};
            \node[] at (2.5,2.5) {$2$};
            \node[] at (3.5,2.5) {$3$};

            \node[] at (2,-1) {$\frac{(1-t)^3}{(1-qt^3)(1-q^2t)(1-q^2t^2)}$};
        \end{scope}
        \begin{scope}[xshift=-10cm, yshift=-6cm]
            \draw[thick] (0,0) grid (4,1);
            \draw[thick] (1,1) grid (4,3);

            \node[] at (0.5,0.5) {$1$};
            \node[] at (1.5,0.5) {$2$};
            \node[] at (2.5,0.5) {$3$};
            \node[] at (3.5,0.5) {$4$};

            \node[] at (1.5,1.5) {$1$};
            \node[] at (2.5,1.5) {$2$};
            \node[] at (3.5,1.5) {$4$};

            \node[] at (1.5,2.5) {$1$};
            \node[] at (2.5,2.5) {$3$};
            \node[] at (3.5,2.5) {$2$};

            \node[] at (2,-1) {$\frac{qt^2(1-t)^4}{(1-qt^3)(1-qt^2)(1-q^2t)(1-q^2t^2)}$};
        \end{scope}
        \begin{scope}[xshift=0cm, yshift=-6cm]
            \draw[thick] (0,0) grid (4,1);
            \draw[thick] (1,1) grid (4,3);

            \node[] at (0.5,0.5) {$1$};
            \node[] at (1.5,0.5) {$2$};
            \node[] at (2.5,0.5) {$3$};
            \node[] at (3.5,0.5) {$4$};

            \node[] at (1.5,1.5) {$1$};
            \node[] at (2.5,1.5) {$2$};
            \node[] at (3.5,1.5) {$4$};

            \node[] at (1.5,2.5) {$3$};
            \node[] at (2.5,2.5) {$1$};
            \node[] at (3.5,2.5) {$2$};

            \node[] at (2,-1) {$\frac{qt(1-t)^5}{(1-qt)(1-qt^2)(1-qt^3)(1-q^2t)(1-q^2t^2)}$};
        \end{scope}
        \begin{scope}[xshift=10cm, yshift=-6cm]
            \draw[thick] (0,0) grid (4,1);
            \draw[thick] (1,1) grid (4,3);

            \node[] at (0.5,0.5) {$1$};
            \node[] at (1.5,0.5) {$2$};
            \node[] at (2.5,0.5) {$3$};
            \node[] at (3.5,0.5) {$4$};

            \node[] at (1.5,1.5) {$1$};
            \node[] at (2.5,1.5) {$2$};
            \node[] at (3.5,1.5) {$4$};

            \node[] at (1.5,2.5) {$3$};
            \node[] at (2.5,2.5) {$2$};
            \node[] at (3.5,2.5) {$1$};

            \node[] at (2,-1) {$\frac{qt^2(1-t)^4}{(1-qt)(1-qt^3)(1-q^2t)(1-q^2t^2)}$};
        \end{scope}
        \draw [decorate,decoration={brace,amplitude=10pt,mirror,raise=4ex}]
            (-18,-7.5) -- (14,-7.5) ;
        \begin{scope}[xshift = -4.5cm, yshift=-13cm]
            \node[] at (-4, 2) {Restricted NAF:};
        
            \draw[thick] (0,0) grid (4,1);
            \draw[thick] (1,1) grid (4,3);

            \node[] at (0.5,0.5) {$1$};
            \node[] at (1.5,0.5) {$2$};
            \node[] at (2.5,0.5) {$3$};
            \node[] at (3.5,0.5) {$4$};

            \node[] at (1.5,1.5) {$2$};
            \node[] at (2.5,1.5) {$1$};
            \node[] at (3.5,1.5) {$4$};

            \node[] at (1.5,2.5) {$2$};
            \node[] at (2.5,2.5) {$1$};
            \node[] at (3.5,2.5) {$3$};

            \node[] at (2,-1) {$\wt_m = \frac{(1-t)^2}{(1-qt)(1-q^2t)}$};
        \end{scope}
    \end{tikzpicture}
    \caption{The set of nonattacking fillings whose $q,t$-weights sum to the 
    $q,t$-mixed weight of the restricted nonattacking filling at the bottom. 
    }
    \label{fig:CompressionExample}
\end{figure}

The remainder of the paper is dedicated to proving Theorem~\ref{thm:MainResult}. To do so, we define operators that act on mixed fillings. The action behaves well with respect to the coinversions and the major index statistics. In some cases, we can use this action to show that the mixed polynomial of one set of mixed fillings is equal to the polynomial of another set of mixed fillings. Starting from the usual nonattacking fillings in the \HHL model, we use the operators to get to the restricted nonattacking fillings for the compositions in Theorem~\ref{thm:MainResult} by inducting over the set of boxes in $\dg'(\mu)$. However, this method does not extend easily to the general case because the set of boxes that hit a box is larger, and it is not clear how to use the operators to describe the exact set of nonattacking fillings that sum to a single restricted nonattacking filling, see Section~\ref{sect:Conclusion} for more info and an example.

\section{Operator on Columns} \label{section:Operator}

In this section, we define a set of operators that act on nonattacking fillings. Then, we prove that major index and coinversions change in a predictable manner. Finally, we describe the effect of the operators when applied to specific mixed nonattacking fillings. 

Suppose $\mu = (\mu_1, \mu_2, \ldots, \mu_n)$ is a composition such that there exist columns $\mu_i = \mu_{i'}$, where $i < i'$. Furthermore, suppose there exists a $1 \le j \le \mu_i$ such that $\mu_c \le j-2$ for all $i < c < i'$. As a result, the boxes $(i,j)$ and $(i',j)$ are consecutive in the diagram of $\mu$. In Figure~\ref{fig:OperatorAction}, we show a generic diagram for this setup.

\begin{definition}
We define the action of the $\T{i}{j}$ on a filling  $|ws$ algorithmically. Start by swapping entries $\ws(i,j)$ and $\ws(i,j)$. Then, if either of the following conditions is met, the operator swaps the entries in boxes $(i,j+1)$ and $(i',j+1)$:
\begin{itemize}
    \item $\ws(i,j) = \ws(i,j+1)$,
    \item $\ws(i',j) = \ws(i',j+1)$, or 
    \item the orientation of the triple $\{\ws(i,j+1),\ws(i',j+1), \ws(i',j)\}$ is different from the orientation of the triple $\{\ws(i,j+1),\ws(i',j+1), \ws(i,j)\}$ in $\ws$.
\end{itemize}
We proceed to swap the entries in boxes $(i,j+2)$ and $(i',j+2)$ using the same conditions, but boxes $(i,j)$ and $(i',j)$ are replaced with $(i,j+1)$ and $(i',j+1)$ respectively when deciding to swap these entries. The operator continues in this way until either there are no more boxes to swap or a swap is not necessary. The resulting filling is a nonattacking filling $\ws'$, and we say $\T{i}{j}(\ws) = \ws'$. If either $\ws(i',j)=\ws(i',j-1)$ or, at any point during the swapping procedure, a trivial pair is formed in column $i'$, the operator acts trivially, that is, $\T{i}{j}(\ws) = \ws$. We write $\ws'$ for $\T{i}{j}(\ws)$ whenever the context is clear to simplify notation.
\end{definition}

In Figure~\ref{fig:OperatorAction}, we show the most general case of $\T{i}{j}$ acting on a filling $\ws$. The leftmost nonattacking filling is $\ws$ and the rightmost nonattacking filling is $\T{i}{j}(\ws)$. The letters in the boxes denote the entries of the boxes and not the boxes themselves. We assume that the last entries to swap are $c_k$ and $d_k$; thus, the orientation of the type II triple $\{a,b,d_k\}$ in $\ws$ is the same as the orientation of the type II triple $\{a,b,c_k\}$ in $\ws'$.
\begin{figure}[h!]
    \centering
    \begin{tikzpicture}
    \begin{scope}[xshift = -4cm, yshift = 0cm]
        \draw[thick] (-.1,0) -- (-.1,5.5);
        \draw[thick] (0.6,0) -- (0.6,5.5);
        
        \draw[thick] (1.9,0) -- (1.9,5.5);
        \draw[thick] (2.6,0) -- (2.6,5.5);

        \draw[thick] (-0.1,0.5) -- (2.6,0.5);
        
        \draw[thick] (-.1,1) -- (0.6,1);
        \draw[thick] (-.1,1.5) -- (0.6,1.5);
        \draw[thick] (-.1,2) -- (0.6,2);
        \draw[thick] (-.1,2.5) -- (0.6,2.5);
        \draw[thick] (-.1,3) -- (0.6,3);
        \draw[thick] (-.1,3.5) -- (0.6,3.5);
        \draw[thick] (-.1,4) -- (0.6,4);
        \draw[thick] (-.1,4.5) -- (0.6,4.5);
        \draw[thick] (-.1,5) -- (0.6,5);

        \node at (0.25,0.25) {$\vdots$};
        \node at (0.25,0.75) {$u$};
        \node at (0.25,1.25) {$x$};
        \node[blue box] at (0.25,1.75) {$c_1$};
        \node at (0.25,2.35) {$\vdots$};
        \node at (0.25,2.75) {$c_{i-1}$};
        \node at (0.25,3.25) {$c_i$};
        \node at (0.25,3.85) {$\vdots$};
        \node at (0.25,4.25) {$c_k$};
        \node at (0.25,4.75) {$a$};
        \node at (0.25,5.35) {$\vdots$};

        \node at (1.25, 0.25) {$\ldots$};

        \draw[thick] (1.9,1) -- (2.6,1);
        \draw[thick] (1.9,1.5) -- (2.6,1.5);
        \draw[thick] (1.9,2) -- (2.6,2);
        \draw[thick] (1.9,2.5) -- (2.6,2.5);
        \draw[thick] (1.9,3) -- (2.6,3);
        \draw[thick] (1.9,3.5) -- (2.6,3.5);
        \draw[thick] (1.9,4) -- (2.6,4);
        \draw[thick] (1.9,4.5) -- (2.6,4.5);
        \draw[thick] (1.9,5) -- (2.6,5);

        \node at (2.25,0.25) {$\vdots$};
        \node at (2.25,0.75) {$w$};
        \node[blue box] at (2.25,1.25) {$v$};
        \node[blue box] at (2.25,1.75) {$d_1$};
        \node at (2.25,2.35) {$\vdots$};
        \node at (2.25,2.75) {$d_{i-1}$};
        \node at (2.25,3.25) {$d_i$};
        \node at (2.25,3.85) {$\vdots$};
        \node at (2.25,4.25) {$d_k$};
        \node at (2.25,4.75) {$b$};
        \node at (2.25,5.35) {$\vdots$};
        
    \end{scope}
    \draw[thick, ->] (-1.25,2.65) -- (-0.25,2.65);
    \begin{scope}[xshift = 0cm, yshift = 0cm]
        \draw[thick] (-.1,0) -- (-.1,5.5);
        \draw[thick] (0.6,0) -- (0.6,5.5);
        
        \draw[thick] (1.9,0) -- (1.9,5.5);
        \draw[thick] (2.6,0) -- (2.6,5.5);

        \draw[thick] (-0.1,0.5) -- (2.6,0.5);
        
        \draw[thick] (-.1,1) -- (0.6,1);
        \draw[thick] (-.1,1.5) -- (0.6,1.5);
        \draw[thick] (-.1,2) -- (0.6,2);
        \draw[thick] (-.1,2.5) -- (0.6,2.5);
        \draw[thick] (-.1,3) -- (0.6,3);
        \draw[thick] (-.1,3.5) -- (0.6,3.5);
        \draw[thick] (-.1,4) -- (0.6,4);
        \draw[thick] (-.1,4.5) -- (0.6,4.5);
        \draw[thick] (-.1,5) -- (0.6,5);

        \node[] at (0.25,0.25) {$\vdots$};
        \node at (0.25,0.75) {$u$};
        \node at (0.25,1.25) {$v$};
        \node[blue box] at (0.25,1.75) {$c_1$};
        \node at (0.25,2.35) {$\vdots$};
        \node at (0.25,2.75) {$c_{i-1}$};
        \node at (0.25,3.25) {$c_i$};
        \node at (0.25,3.85) {$\vdots$};
        \node at (0.25,4.25) {$c_k$};
        \node at (0.25,4.75) {$a$};
        \node at (0.25,5.35) {$\vdots$};

        \node at (1.25, 0.25) {$\ldots$};

        \draw[thick] (1.9,1) -- (2.6,1);
        \draw[thick] (1.9,1.5) -- (2.6,1.5);
        \draw[thick] (1.9,2) -- (2.6,2);
        \draw[thick] (1.9,2.5) -- (2.6,2.5);
        \draw[thick] (1.9,3) -- (2.6,3);
        \draw[thick] (1.9,3.5) -- (2.6,3.5);
        \draw[thick] (1.9,4) -- (2.6,4);
        \draw[thick] (1.9,4.5) -- (2.6,4.5);
        \draw[thick] (1.9,5) -- (2.6,5);

        \node at (2.25,0.25) {$\vdots$};
        \node at (2.25,0.75) {$w$};
        \node[blue box] at (2.25,1.25) {$x$};
        \node[blue box] at (2.25,1.75) {$d_1$};
        \node at (2.25,2.35) {$\vdots$};
        \node at (2.25,2.75) {$d_{i-1}$};
        \node at (2.25,3.25) {$d_i$};
        \node at (2.25,3.85) {$\vdots$};
        \node at (2.25,4.25) {$d_k$};
        \node at (2.25,4.75) {$b$};
        \node at (2.25,5.35) {$\vdots$};
        
    \end{scope}
    \node at (3.75,2.65) {$\scalebox{1.5}{\dots}$};
    \begin{scope}[xshift = 5cm, yshift = 0cm]
        \draw[thick] (-.1,0) -- (-.1,5.5);
        \draw[thick] (0.6,0) -- (0.6,5.5);
        
        \draw[thick] (1.9,0) -- (1.9,5.5);
        \draw[thick] (2.6,0) -- (2.6,5.5);

        \draw[thick] (-0.1,0.5) -- (2.6,0.5);
        
        \draw[thick] (-.1,1) -- (0.6,1);
        \draw[thick] (-.1,1.5) -- (0.6,1.5);
        \draw[thick] (-.1,2) -- (0.6,2);
        \draw[thick] (-.1,2.5) -- (0.6,2.5);
        \draw[thick] (-.1,3) -- (0.6,3);
        \draw[thick] (-.1,3.5) -- (0.6,3.5);
        \draw[thick] (-.1,4) -- (0.6,4);
        \draw[thick] (-.1,4.5) -- (0.6,4.5);
        \draw[thick] (-.1,5) -- (0.6,5);

        \node at (0.25,0.25) {$\vdots$};
        \node at (0.25,0.75) {$u$};
        \node at (0.25,1.25) {$v$};
        \node at (0.25,1.75) {$d_1$};
        \node at (0.25,2.35) {$\vdots$};
        \node at (0.25,2.75) {$d_{i-1}$};
        \node at (0.25,3.25) {$d_i$};
        \node at (0.25,3.85) {$\vdots$};
        \node[blue box] at (0.25,4.25) {$c_k$};
        \node[] at (0.25,4.75) {$a$};
        \node at (0.25,5.35) {$\vdots$};

        \node at (1.25, 0.25) {$\ldots$};

        \draw[thick] (1.9,1) -- (2.6,1);
        \draw[thick] (1.9,1.5) -- (2.6,1.5);
        \draw[thick] (1.9,2) -- (2.6,2);
        \draw[thick] (1.9,2.5) -- (2.6,2.5);
        \draw[thick] (1.9,3) -- (2.6,3);
        \draw[thick] (1.9,3.5) -- (2.6,3.5);
        \draw[thick] (1.9,4) -- (2.6,4);
        \draw[thick] (1.9,4.5) -- (2.6,4.5);
        \draw[thick] (1.9,5) -- (2.6,5);

        \node at (2.25,0.25) {$\vdots$};
        \node at (2.25,0.75) {$w$};
        \node at (2.25,1.25) {$x$};
        \node at (2.25,1.75) {$c_1$};
        \node at (2.25,2.35) {$\vdots$};
        \node at (2.25,2.75) {$c_{i-1}$};
        \node at (2.25,3.25) {$c_i$};
        \node at (2.25,3.85) {$\vdots$};
        \node[blue box] at (2.25,4.25) {$d_k$};
        \node[] at (2.25,4.75) {$b$};
        \node at (2.25,5.35) {$\vdots$};
        
    \end{scope}
    \draw[thick, ->] (7.75,2.65) -- (8.75,2.65);
    \begin{scope}[xshift = 9cm, yshift = 0cm]
        \draw[thick] (-.1,0) -- (-.1,5.5);
        \draw[thick] (0.6,0) -- (0.6,5.5);
        
        \draw[thick] (1.9,0) -- (1.9,5.5);
        \draw[thick] (2.6,0) -- (2.6,5.5);

        \draw[thick] (-0.1,0.5) -- (2.6,0.5);
        
        \draw[thick] (-.1,1) -- (0.6,1);
        \draw[thick] (-.1,1.5) -- (0.6,1.5);
        \draw[thick] (-.1,2) -- (0.6,2);
        \draw[thick] (-.1,2.5) -- (0.6,2.5);
        \draw[thick] (-.1,3) -- (0.6,3);
        \draw[thick] (-.1,3.5) -- (0.6,3.5);
        \draw[thick] (-.1,4) -- (0.6,4);
        \draw[thick] (-.1,4.5) -- (0.6,4.5);
        \draw[thick] (-.1,5) -- (0.6,5);

        \node at (0.25,0.25) {$\vdots$};
        \node at (0.25,0.75) {$u$};
        \node at (0.25,1.25) {$v$};
        \node at (0.25,1.75) {$d_1$};
        \node at (0.25,2.35) {$\vdots$};
        \node at (0.25,2.75) {$d_{i-1}$};
        \node at (0.25,3.25) {$d_i$};
        \node at (0.25,3.85) {$\vdots$};
        \node at (0.25,4.25) {$d_k$};
        \node[blue box] at (0.25,4.75) {$a$};
        \node at (0.25,5.35) {$\vdots$};

        \node at (1.25, 0.25) {$\ldots$};

        \draw[thick] (1.9,1) -- (2.6,1);
        \draw[thick] (1.9,1.5) -- (2.6,1.5);
        \draw[thick] (1.9,2) -- (2.6,2);
        \draw[thick] (1.9,2.5) -- (2.6,2.5);
        \draw[thick] (1.9,3) -- (2.6,3);
        \draw[thick] (1.9,3.5) -- (2.6,3.5);
        \draw[thick] (1.9,4) -- (2.6,4);
        \draw[thick] (1.9,4.5) -- (2.6,4.5);
        \draw[thick] (1.9,5) -- (2.6,5);

        \node at (2.25,0.25) {$\vdots$};
        \node at (2.25,0.75) {$w$};
        \node at (2.25,1.25) {$x$};
        \node at (2.25,1.75) {$c_1$};
        \node at (2.25,2.35) {$\vdots$};
        \node at (2.25,2.75) {$c_{i-1}$};
        \node at (2.25,3.25) {$c_i$};
        \node at (2.25,3.85) {$\vdots$};
        \node[blue box] at (2.25,4.25) {$c_k$};
        \node[blue box] at (2.25,4.75) {$b$};
        \node at (2.25,5.35) {$\vdots$};
        
    \end{scope}
    \end{tikzpicture}.
    \caption{The operator $\mathcal{T}_{(i,j)}$ applied to 
    columns of the same height.}
    \label{fig:OperatorAction}
\end{figure}

In the remaining part of this section, we analyze the change in coinversions and major index from $\ws$ to $\ws'$. We define
$$\Delta \maj = \maj(\ws') - \maj(\ws) \qquad \text{ and } \qquad \Delta \coinv = \coinv(\ws') - \coinv(\ws).$$
To calculate $\Delta \maj$ and $\Delta \coinv$, we compute the contribution to the change in major index and the change in coinversions row by row. We start with the topmost row containing entries $a$ and $b$ in Figure~\ref{fig:OperatorAction}.

\begin{lemma} \label{lem:OperatorTop}
Consider the setup presented in Figure~\ref{fig:OperatorAction}. The topmost row, containing entries $a$ and $b$, does not contribute to the change in coinversions nor the change in major index. 
\end{lemma}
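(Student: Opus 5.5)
We need to show that the row containing $a$ (column $i$) and $b$ (column $i'$) contributes nothing to $\Delta\maj$ and $\Delta\coinv$. The relevant row-$(j+k+1)$ statistics are these:
\begin{itemize}
\item the descent status of the boxes containing $a$ and $b$;
\item the coinversion triples in which $a$ or $b$ plays the role of $u$ (together with its $d(u)$);
\item the triples in which $a$ or $b$ plays the role of $v$ in a type II triple with some other column in the same row, or of $v$ in a type I triple with $d(u)$ in that row.
\end{itemize}
Entries above this row are unchanged, and triples involving higher rows only see $a$ and $b$, which do not move. So only the statistics of row $j+k+1$ and its interaction with row $j+k$ need checking.

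\textbf{Major index.} The box containing $a$ has leg length $l$ and the box containing $b$ has the same leg length $l$, since $\mu_i=\mu_{i'}$. In $\ws$ the descents are determined by $a$ versus $c_k$ and $b$ versus $d_k$. In $\ws'$ they are determined by $a$ versus $d_k$ and $b$ versus $c_k$.

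The operator stops at row $j+k$ because none of the three swap conditions hold. In particular $a\neq c_k$, $b\neq d_k$, and the triple $\{a,b,\cdot\}$ has the same orientation with $d_k$ as with $c_k$. I would translate ``same orientation'' into the statement that the cyclic order of $(a,b,c_k)$ equals that of $(a,b,d_k)$. This means $c_k$ and $d_k$ lie in the same one of the three arcs determined by $a<b$ or $b<a$.

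A short case check on the arc then shows that the number of descents among the two boxes in row $j+k+1$ is the same before and after the swap. For example, if $c_k,d_k<\min(a,b)$, both boxes are descents in both fillings. If both lie between $a$ and $b$, the descent just moves from one column to the other. Since both columns have equal leg length, the contribution to $\maj$ is identical.

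\textbf{Coinversions.} I would split the triples into three groups.
\begin{itemize}
\item \emph{Triples with $u$ in the $a$ or $b$ box and $v$ in a third column $c$.} The columns strictly between $i$ and $i'$ have height at most $j-2$, so they contribute no boxes in these rows. For $c<i$ or $c>i'$, the column $c$ stands in the same relation (left/right, shorter/not) to both $i$ and $i'$. Hence the multiset of triples $\{v,u,d(u)\}$ is permuted: $(a,c_k)$ pairs with $v$ before the swap, and $(a,d_k)$ after. The total count is preserved exactly when, for each fixed $v$-entry $e$, the counts satisfy
\[
[\text{coinv}(a,e,c_k)]+[\text{coinv}(b,e,d_k)] = [\text{coinv}(a,e,d_k)]+[\text{coinv}(b,e,c_k)].
\]
This follows again from $c_k$ and $d_k$ lying in the same arc relative to $a$ and $b$.
\item \emph{The type II triple between columns $i$ and $i'$ themselves.} Here $v=a$, $u=b$, and $d(u)$ is $d_k$ before and $c_k$ after. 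This triple is unchanged by the orientation condition.
\item \emph{Type I triples with $d(u)$ equal to $c_k$ or $d_k$ and $v$ in row $j+k$.} These belong to the row-$(j+k)$ analysis, where the roles of $u$ and $d(u)$ are split between rows.
\end{itemize}
I would attribute them consistently to the lower row, as the subsequent lemmas presumably do, so that this lemma only needs the top-row-centered triples.

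The main obstacle I expect is the four-term identity for triples with a third column. I would prove it by observing that $\text{coinv}(x,e,y)$ depends only on the cyclic order of $(x,e,y)$, which follows from the three defining patterns. It then suffices to verify the identity on cyclic orders of the four values, using that $c_k$ and $d_k$ sit in a common arc. Ties, where $e$ equals one of the entries, also need care, but nonattacking forbids most of them.
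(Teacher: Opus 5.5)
Your route is the paper's own. Only triples whose $u$ is the $a$- or $b$-box (so $d(u)\in\{c_k,d_k\}$) can change, the triple $\{a,b,\cdot\}$ is fixed by the stopping rule, your four-term identity for a third-column entry is exactly what Table~\ref{tab:OrientationTableCC} checks, and descents are compared case by case using that both top boxes have the same leg. Two steps, however, are wrong as written and need repair.

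\textbf{The stopping condition.} Equal orientation of $(a,b,c_k)$ and $(a,b,d_k)$ does not put $c_k,d_k$ in the same one of three linear intervals. It puts them on the same one of the \emph{two} cyclic arcs cut out by $a$ and $b$: both strictly between $a$ and $b$, or both outside $[\min(a,b),\max(a,b)]$. For example, $a=2$, $b=3$, $c_k=1$, $d_k=4$ satisfies the stopping rule. So your descent check omits the split case $c_k<\min(a,b)<\max(a,b)<d_k$ and its mirror image; the paper's list contains $c_k<b<a<d_k$. This case is harmless, since each filling then has exactly one descent in that row, but it has to be checked.

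Your four-term identity survives once ``arc'' is read cyclically. Cutting the circle of values at $e$ gives a linear order in which the coinversion status of $(x,e,y)$ is the indicator that $y$ precedes $x$ (or its complement). In this order either both $c_k,d_k$ fall between $a$ and $b$ or neither does, which gives the identity immediately.

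There are in fact no ties. $a\neq d_k$ by nonattacking in $\ws$, and $a\neq c_k$, $b\neq d_k$ by the stopping rule. Also $b\neq c_k$, because otherwise a trivial pair forms in column $i'$ and the operator acts trivially. Every $v$-entry in the top row or in the row below it is distinct from all four by nonattacking.

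\textbf{The third bullet.} It contradicts your first. A type I triple with $d(u)\in\{c_k,d_k\}$ has $u\in\{a,b\}$ and $v$ in row $j+k$ in a column $c>i'$, so it is precisely the $c>i'$ part of your first bullet. These triples must be settled in this lemma; the paper does so by swapping clockwise and counterclockwise in Table~\ref{tab:OrientationTableCC}. Lemma~\ref{lem:OperatorMiddle} only treats triples whose $u$ lies in rows $j+1,\dots,j+k$ (the triples $\{f,c_i,c_{i-1}\}$ and $\{f,d_i,d_{i-1}\}$), so deferring them to the lower row would leave them uncounted. Drop that bullet and keep these triples in the first one. For distinct entries the type I indicator is one minus the type II indicator, so your identity transfers.

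The only other triples touching $c_k$ or $d_k$ are those in which $c_k$ or $d_k$ plays the role of $v$, apart from $\{c_k,d_k,d_{k-1}\}$, which belongs to Lemma~\ref{lem:OperatorMiddle}. These are unchanged because columns $i$ and $i'$ play identical roles for them, so the multiset of value triples is the same in $\ws$ and $\ws'$.
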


\begin{proof}
    We begin by showing that the number of type I and type II coinversion triples does not change. Then, we finish by proving that the number of descents does not change; hence, the major index does not change. 

    First, notice that the only place where coinversion triples can change is in the ones that involve $a$ and $c_k$ or $b$ and $d_k$. We include the setup for this case in Figure~\ref{fig:TopRowSetup}. All that we have to do is show that the number of coinversion triples of the form $\{f, a, c_k\}$ and $\{f, b, d_k\}$ in $\ws$ equals the number of coinversion triples of the form $\{f, a, d_k\}$ and $\{f, b, c_k\}$ in $\ws'$. 

    \begin{figure}[h!]
        \centering
        \begin{tikzpicture}[scale = 0.5]
        \begin{scope}[]
            \draw[] (0,0) grid (2,2);
            \draw[] (-2,1) grid (-1,2);
            \draw[] (3,0) grid (4,1);

            \node[] at (0.5,0.5) {$c_k$};
            \node[] at (1.5,0.5) {$d_k$};
            \node[] at (0.5,1.5) {$a$};
            \node[] at (1.5,1.5) {$b$};
            \node[] at (-1.5,1.5) {$f$};
            \node[] at (3.5,0.5) {$f'$};

            \node at (1,3) {$\underline{\ws}$};
        \end{scope}
        \begin{scope}[xshift = 10cm]
            \draw[] (0,0) grid (2,2);
            \draw[] (-2,1) grid (-1,2);
            \draw[] (3,0) grid (4,1);

            \node[] at (0.5,0.5) {$d_k$};
            \node[] at (1.5,0.5) {$c_k$};
            \node[] at (0.5,1.5) {$a$};
            \node[] at (1.5,1.5) {$b$};
            \node[] at (-1.5,1.5) {$f$};
            \node[] at (3.5,0.5) {$f'$};

            \node at (1,3) {$\underline{\ws'}$};
        \end{scope}
        \end{tikzpicture}
        \caption{The form of the coinversion triples that we consider in this proof.}
        \label{fig:TopRowSetup}
    \end{figure}
    
    By definition, the type II triple $\{a,b,d_k\}$ in $\ws$ and the type II triple $\{a,b,c_k\}$ in $\ws'$ have the same orientation. Thus, $\{a,b,d_k\}$ is a type II coinversion triple in $\ws$ if and only if $\{a,b,c_k\}$ is a type II coinversion triple in $\ws'$. This means that we only need to count the number of coinversions where $f$ is not $a$. 
    
    Without loss of generality, assume that $d_k < c_k$.  In Table~\ref{tab:OrientationTableCC}, we show the possible orientations for each set of type II triples. The first column is the direction in which the entries increase. The second and third columns tell us whether the triple listed at the top is a type II coinversion triple in $\ws$. Finally, the fourth and fifth columns tell us whether the triples at the top of the column are type II coinversion triples in $\ws'$. The table confirms that the number of counterclockwise and clockwise pairs involving the triples at the top of the columns stays the same. Thus, the number of type II coinversion triples of the form $\{f,a,c_k\}$ and $\{f,a,d_k\}$ in $\ws$ equals the number of type II coinversion triples of the form $\{f,a,d_k\}$ and $\{f,b,c_k\}$ in $\ws'$. 

    Similar reasoning shows that this is also true for type I coinversion triples of the form $\{f,a,c_k\}$ and $\{f,a,d_k\}$ in $\ws$ and $\{f,a,d_k\}$ and $\{f,a,c_k\}$ in $\ws'$. The table for type I coinversion triples can be derived from Table~\ref{tab:OrientationTableCC} by swapping counterclockwise and clockwise. 
    \begin{table}[h!]
        \centering
\newcommand{\cycle}[5]{
    \begin{tikzpicture}[baseline=(current bounding box.center), scale=0.7, 
                        every node/.style={font=\sffamily\small, inner sep=2pt}]
        \def\R{0.8cm} 
        \coordinate (c1) at (90:\R);
        \coordinate (c2) at (18:\R);
        \coordinate (c3) at (-54:\R);
        \coordinate (c4) at (234:\R);
        \coordinate (c5) at (162:\R);

        \node (n1) at (c1) {#1};
        \node (n2) at (c2) {#2};
        \node (n3) at (c3) {#3};
        \node (n4) at (c4) {#4};
        \node (n5) at (c5) {#5};

        \draw[thick, <-] (n1) -- (n2);
        \draw[thick, <-] (n2) -- (n3);
        \draw[thick, <-] (n3) -- (n4);
        \draw[thick, <-] (n4) -- (n5);
        \draw[thick, <-] (n5) -- (n1);
    \end{tikzpicture}
}

\renewcommand{\arraystretch}{2} 

    \begin{tabular}{c|c|c|c|c}
     counterclockwise & $f,a,c_k$ & $f,b,d_k$ & $f,a,d_k$ & $f,b,c_k$ \\
    \hline
    \cycle{$a$}{$b$}{$c_k$}{$d_k$}{$f$} & counter & counter & counter & counter \\
    \cycle{$a$}{$b$}{$c_k$}{$f$}{$d_k$} & counter & clock & clock & counter \\
    \cycle{$a$}{$b$}{$f$}{$c_k$}{$d_k$} & clock & clock & clock & clock \\
    \cycle{$a$}{$f$}{$b$}{$c_k$}{$d_k$} & clock & counter & clock & counter \\
    \end{tabular}
        \caption{Possible relative orderings for entries $f,a,b,d_k,c_k$ (type II triples): columns 2 and 3 correspond to $\ws$ and columns 4 and 5 correspond to $\ws'$}
        \label{tab:OrientationTableCC}
    \end{table}

We have shown that the number of coinversion at the top does not change. We finish by showing that the number of descents stays the same. 

Suppose that $\{a,b,d_k\}$ is a type II coinversion triple in $\ws$. Then, the triple $\{a,b,c_k\}$ is also a type II coinversion triple. If $c_k < d_k$, then
$$c_k < d_k < b < a, \qquad a < c_k < d_k < b, \qquad c_k < b < a < d_k, \qquad and \qquad b < a < c_k < d_k$$ 
are the only possible orderings such that $\{a,b,c_k\}$ is also a type II coinversion triple. By looking at Figure~\ref{fig:TopRowSetup}, the total number of descents involving entries $a$ and $b$ is preserved. The same is true if $d_k < c_k$. Therefore, the major index at the top row is not affected by swapping $c_k$ and $d_k$. 
\end{proof}

Next we show that the middle swaps do not affect the change in major index, and that the change in coinversion is rarely affected. Specifically, we are considering the middle entries $c_i,c_{i-1},d_i$ and $d_{i-1}$ for $i = 1, \ldots k$, where $c_0 = x$ and $d_0 = v$. The following lemma summarizes all the possible ways that the operators can affect the major index and coinversions.

\begin{lemma} \label{lem:OperatorMiddle}
Consider the entries $c_i$, $c_{i-1}$, $d_i$, $d_{i-1}$ for $i = 1, 2, \ldots, k$, where $c_0 = x$ and $d_0 = v$, as presented in Figure~\ref{fig:OperatorAction}. Then row $i$ does not contribute to the change in the major index, and row $i$ contributes to the change in coinversions in one of five ways:
\begin{enumerate}
    \item If $c_i$, $c_{i-1}$, $d_i$, and $d_{i-1}$ are distinct, then row $i$ contributes no change in coinversions.
    \item If $c_i = c_{i-1}$ and $d_i = d_{i-1}$, then row $i$ contributes no change in the coinversions.
    \item If $c_i = c_{i-1}$, $d_i \neq d_{i-1}$, and the triple $\{c_i,d_i,d_{i-1}\}$ is a coinversion triple, then the number of coinversions decreases by $1$.
    \item If $d_i = d_{i-1}$, $c_i \neq c_{i-1}$, and the triple $\{d_i,c_i,c_{i-1}\}$ in $\ws'$ is a coinversion triple, then the number of coinversions increases by $1$.
    \item If $c_i = c_{i-1}$ and $d_i = d_{i-1}$, then row $i$ contributes no change to the coinversions.
\end{enumerate}

\end{lemma}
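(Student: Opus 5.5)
Throughout, I fix a middle row $i$ with $1\le i\le k$. In $\ws$ the left column $i$ of the paper's picture reads $c_{i-1}$ below $c_i$, and the right column $i'$ reads $d_{i-1}$ below $d_i$. In $\ws'=\T{i}{j}(\ws)$ both rows have been swapped, so the left column reads $d_{i-1}$ below $d_i$ and the right column reads $c_{i-1}$ below $c_i$. The plan is to isolate exactly which descents and which coinversion triples involve the pairs $(c_i,c_{i-1})$ or $(d_i,d_{i-1})$ as $(u,d(u))$. Then I compare their statuses before and after the swap, following the pattern of Lemma~\ref{lem:OperatorTop}.

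\textbf{Major index.} Since $\mu_i=\mu_{i'}$, boxes $(i,j+i)$ and $(i',j+i)$ have the same leg length. The descent at the box of row $i$ is determined by the vertical pair of entries, and that pair moves as a unit to the other column. So the multiset of descent weights $l(u)+1$ contributed by row $i$ is unchanged, and $\Delta\maj$ gets nothing from row $i$. This argument is uniform and needs no case split.

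\textbf{Coinversions.} I would sort the triples $\{f,u,d(u)\}$ with $u$ in row $i$ of column $i$ or $i'$ into three groups.
\begin{enumerate}
\item Triples whose third box $f$ lies outside columns $i,i'$. Such a box is either strictly left of column $i$, or right of column $i'$, or strictly between them, in which case its column is short by the hypothesis $\mu_c\le j-2$. In each situation, the type (I or II) of the triple relative to $u$ does not depend on whether $u$ is in column $i$ or $i'$. As in Table~\ref{tab:OrientationTableCC}, I would check by a pentagon-orientation table that, for each such $f$, the count over the two vertical pairs is invariant under exchanging which column holds which pair. Fixing $f$, the triples with the two pairs $(c_i,c_{i-1})$ and $(d_i,d_{i-1})$ are just relabeled.
\item Type II triples internal to the two columns, namely $\{c_i,d_i,d_{i-1}\}$ in $\ws$ and $\{d_i,c_i,c_{i-1}\}$ in $\ws'$. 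I also include the triples with $f$ in row $i-1$ that the swap could change. This group is where the five cases of the statement arise.
\item Triples straddling rows $i$ and $i+1$. These are attributed to row $i+1$ and are handled there, or by Lemma~\ref{lem:OperatorTop} for the top row.
\end{enumerate}

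\textbf{Case analysis of the internal triple.} A triple with two equal entries is never a coinversion, since the definition needs strict inequalities.
\begin{itemize}
\item If all four entries are distinct, the operator's swap rule guarantees that $\{c_i,d_i,d_{i-1}\}$ in $\ws$ and $\{d_i,c_i,c_{i-1}\}$ in $\ws'$ have compatible orientation. This is exactly the condition that made the row swap. I would check by the orientation table that the coinversion status is preserved, giving case (1).
\item If $c_i=c_{i-1}$ and $d_i=d_{i-1}$, both internal triples are degenerate, giving cases (2) and (5), which coincide.
\item If only $c_i=c_{i-1}$, the triple in $\ws'$ is degenerate while the one in $\ws$ may be a coinversion. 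Exactly when it is, we lose one, giving case (3).
\item If only $d_i=d_{i-1}$, the situation is symmetric and we gain one, giving case (4).
\end{itemize}

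\textbf{Main obstacle.} I expect the hardest step to be group 1. I must verify that every external $f$ sees the same triple type from both columns. For boxes between columns $i$ and $i'$ in the row below, this requires checking that the $\mu_c\le j-2$ hypothesis makes them absent. I would also need careful bookkeeping so that triples spanning consecutive rows are counted exactly once across Lemma~\ref{lem:OperatorTop} and this lemma.
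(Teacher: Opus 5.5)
Your proposal is correct and follows essentially the paper's argument. The vertical pairs move as units between two columns of equal height, so descents and all triples whose third box lies outside columns $i,i'$ are merely relabeled, and only the internal type II triple can change. In the distinct case that triple is handled by the paper's double orientation flip: the swap condition compares $\{c_i,d_i,d_{i-1}\}$ with $\{c_i,d_i,c_{i-1}\}$, and transposing the top two entries reverses orientation once more, as in Figure~\ref{fig:OpOnMiddle}. In the remaining cases a triple with two equal entries is never a coinversion. You are more explicit than the paper about why boxes in the intermediate columns cannot interfere, namely via the hypothesis $\mu_c\le j-2$.
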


\begin{proof}
We begin with the major index. Notice that $c_i$ is above $c_{i-1}$ and $d_i$ is above $d_{i-1}$ in both fillings; hence, if a descent occurred in $\ws$ between either box, then that same descent is also in $\ws'$. Since both columns have the same height and the number of descents is the same between $\ws$ and $\ws'$ at row $i$. The change contributed by row $i$ is zero.

We continue by proving each part separately. 
\begin{enumerate}
\item Suppose that $c_i$, $c_{i-1}$, $d_i$, and $d_{i-1}$ are distinct. 
Then the coinversion triples of the form $\{f,c_i,c_{i-1}\}$ and $\{f,d_i,d_{i-1}\}$ ($f \neq c_i$) in $\ws$ also occur in the exact same way in $\ws'$. Hence we only need to verify that $\{c_i,d_i,d_{i-1}\}$ is a coinversion triple in $\ws$ if and only if $\{d_i,c_i,c_{i-1}\}$ is a coinversion triple in $\ws'$. 
        
Consider Figure~\ref{fig:OpOnMiddle}. The boxes on the far left show the setup in $\ws$, the boxes in the middle show an intermediate step, and the boxes on the far right show the setup in $\ws'$. The orientation of the triples $\{c_i,d_i,d_{i-1}\}$ in $\ws$ is opposite the orientation of the triple of boxes $\{c_i,d_i,c_{i-1}\}$ in the middle step, and the boxes in the middle step have opposite orientation to the triple $\{d_i,c_i,c_{i-1}\}$ in $\ws'$. Since the orientation switches direction twice, the direction of increase of the triple $\{c_i,d_i,d_{i-1}\}$ in $\ws$ is the same as the direction of increase of the triple $\{d_i,c_i,c_{i-1}\}$ in $\ws'$. Therefore, the number of coinversions at the point does not change. 

        \begin{figure}[h]
            \centering
            \begin{tikzpicture}[scale=0.7]
            \begin{scope}
                \draw[] (0,0) grid (1,2);
                \draw[] (2,0) grid (3,2);

                \node[] at (0.5,0.5) {$c_{i-1}$};
                \node[] at (0.5,1.5) {$c_{i}$};
                \node[] at (2.5,0.5) {$d_{i-1}$};
                \node[] at (2.5,1.5) {$d_{i}$};

                \draw[->] (3.5,1) -- (4.5,1);
            \end{scope}
            \begin{scope}[xshift = 5cm]
                \draw[] (0,0) grid (1,2);
                \draw[] (2,0) grid (3,2);

                \node[] at (0.5,0.5) {$d_{i-1}$};
                \node[] at (0.5,1.5) {$c_{i}$};
                \node[] at (2.5,0.5) {$c_{i-1}$};
                \node[] at (2.5,1.5) {$d_{i}$};

                \draw[->] (3.5,1) -- (4.5,1); 
            \end{scope}
            \begin{scope}[xshift = 10cm]
                \draw[] (0,0) grid (1,2);
                \draw[] (2,0) grid (3,2);

                \node[] at (0.5,0.5) {$d_{i-1}$};
                \node[] at (0.5,1.5) {$d_{i}$};
                \node[] at (2.5,0.5) {$c_{i-1}$};
                \node[] at (2.5,1.5) {$c_{i}$}; 
            \end{scope}
        \end{tikzpicture}
            \caption{Action of operator on the entries $c_i$, $c_{i-1}$, $d_i$, and $d_{i-1}$}
            \label{fig:OpOnMiddle}
        \end{figure}

\item Suppose $c_i = c_{i-1}$ and $d_i = d_{i-1}$. Since the entries are equal, it is clear that the number of coinversions does not change. 

\item Assume that $c_i = c_{i-1}$, $d_i \neq d_{i-1}$, and the triple $\{c_i,d_i,d_{i-1}\}$ is a coinversion triple in $\ws$. Then, the swap results in a loss of the coinversion triple $\{c_i,d_i,d_{i-1}\}$.

\item Assume that $d_i = d_{i-1}$, $c_i \neq c_{i-1}$, and the triple $\{d_i,c_i,c_{i-1}\}$ in $\ws'$ is a coinversion triple. Then, the swap results in gaining the coinversion triple $\{d_i,c_i,c_{i-1}\}$.

\item The only other possibilities are that $c_i = c_{i-1}$, $d_i \neq d_{i-1}$, and the triple $\{c_i,d_i,d_{i-1}\}$ is a not coinversion triple in $\ws$ or that $d_i = d_{i-1}$, $c_i \neq c_{i-1}$, and the triple $\{d_i,c_i,c_{i-1}\}$ in $\ws'$ is a not coinversion triple. In either case, the swap does not affect the number of coinversion triples.
\end{enumerate}
\end{proof}

Now we consider entries $x$ and $v$ in row $j$. We show that the number of descents can change by at most one. Additionally, if the number of descents increases by one (or decreases by one), then the triple $(x,v,w)$ is counterclockwise in $\ws$ (or the triple $(x,v,w)$ is clockwise in $\ws$).

\begin{lemma} \label{lemma: ChangeByTwoDescentsNotAllowed}
    Consider the entries $x,u,v,w$ in $\ws$ and $\ws'$ as in Figure~\ref{fig:OperatorAction}. The number of descents can change by at most one in these positions. Furthermore, if a descent is gained, then $\{x,v,w\}$ is a clockwise triple in $\ws$, and, if the number of descents decreases by one, then $\{x,v,w\}$ is a counterclockwise triple in $\ws$.
\end{lemma}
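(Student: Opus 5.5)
Only the two vertical adjacencies involving row $j$ can change their descent status. In $\ws$ these are the pairs (upper, lower) $=(x,u)$ in column $i$ and $(v,w)$ in column $i'$. In $\ws'=\T{i}{j}(\ws)$ they become $(v,u)$ and $(x,w)$. The rows $j+1,\ldots$ are handled by Lemmas~\ref{lem:OperatorTop} and~\ref{lem:OperatorMiddle}, and rows below $j$ are untouched. Columns $i$ and $i'$ have the same height, so $l$ is the same for the boxes $(i,j)$ and $(i',j)$, and counting descents suffices. The plan is therefore to compare
\[
D=[x>u]+[v>w] \qquad\text{and}\qquad D'=[v>u]+[x>w]
\]
over all relative orders of $x,u,v,w$. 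By the nonattacking condition, $x\ne v$ (same row) and $u\ne w$ (same row). Since the operator acts trivially when $v=w$, we may assume $v\neq w$. The remaining coincidences $x=u$, $x=w$, $v=u$ are either excluded by nonattacking ($v=u$: $v$ lies in the row above and to the right of $u$) or handled as boundary cases of the same inequalities.

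\emph{Step 1 (at most one change).} I would show that $D-D'=\pm2$ is impossible. $D'=D+2$ would require $x<u$, $v<w$, $v>u$ and $x>w$. Chaining these gives $u<v<w<x<u$, a contradiction. Symmetrically, $D=D'+2$ requires $x>u$, $v>w$, $v<u$ and $x<w$, giving $w<v<u<x<w$, again a contradiction. Hence $|D'-D|\le1$.

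\emph{Step 2 (orientation).} For a gain of one descent I would list the sign patterns of the four indicators with $D'=D+1$. In each case, after discarding orders that would force a cyclic chain as in Step 1, the surviving total orders of $\{x,v,w\}$ (with $u$ inserted) should be exactly those in which $x,v,w$ read in the paper's cyclic convention (the one in Figure~\ref{fig:DifferentTriples} and Table~\ref{tab:OrientationTableCC}) is clockwise. For example, take $v>u$, $x>w$, $x<u$, where $v>w$ may hold or not. Then $w<x<u<v$, which is the order $w<x<v$. The loss case follows by the symmetric argument (interchange the roles of $\ws$ and $\ws'$, equivalently reverse the inequalities), giving counterclockwise.

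The main obstacle is keeping the orientation convention straight. "Clockwise" for the triple $\{x,v,w\}$ must be matched with the convention used for type~I/II triples. I would fix it by checking one explicit order against Table~\ref{tab:OrientationTableCC}, and then verify that the few remaining orderings are all consistent. A second, minor point is to check that equalities such as $x=w$ do not create spurious cases. When $x=w$, the entry $x$ moves into column $i'$ and forms a trivial pair there, so the operator acts trivially by definition and these cases can be set aside.
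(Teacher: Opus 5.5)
Your argument follows the paper's proof. You compare $[x>u]+[v>w]$ with $[v>u]+[x>w]$ and rule out a change of $\pm 2$ using the same cyclic chains. You then enumerate the four sign patterns for a gain (respectively a loss) and check that the resulting orders of $x,v,w$ lie in a single cyclic class. There is, however, one concrete error in your bookkeeping of coincidences: you have the attacking pair backwards. Boxes in consecutive rows attack when the \emph{lower} box is to the right of the upper one. So $x$ at $(i,j)$ attacks $w$ at $(i',j-1)$, and $x\neq w$ is forced by nonattacking. By contrast, $u$ at $(i,j-1)$ lies to the left of $v$ at $(i',j)$, so these two do not attack and $v=u$ is allowed. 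This case matters. It is exactly the situation of $\ws'=\T{i}{j}(\ws)$ when $\ws$ has the trivial pair $x=u$, that is, the fillings in $\MF_2$ in Proposition~\ref{prop:InductionStep}.

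The repair is easy, and the lemma survives. Write the non-descent conditions non-strictly, as $x\le u$ and $v\le u$. Step 1 still closes, since the chain becomes $w<v\le u<x<w$. When $v=u$, a direct check shows that no descent is ever gained. The three orders producing a loss are $x<w<v$, $v<x<w$ and $w<v<x$, all in the class the paper calls counterclockwise.

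Relatedly, ``reverse the inequalities'' is not an exact symmetry once $x=u$ or $v=u$ is allowed: complementing the entries does not turn $[x>u]$ into $1-[x>u]$ when $x=u$. The clean symmetry is the swap $x\leftrightarrow v$. It exchanges the two descent counts, reverses the orientation of $\{x,v,w\}$, and preserves the correct constraint set $x\neq v$, $u\neq w$, $v\neq w$, $x\neq w$. Alternatively, you can list the four loss patterns directly, as the paper does.
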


\begin{proof}
    Suppose, for the sake of contradiction, that the number of descents changes by two. The number of descents can only increase by two if $x < u$, $u < v$, $v < w$, and $w < x$. The first pair of inequalities combine to show $x < v$, and the latter pair of inequalities shows that $v < x$, a contradiction. Thus, the number of descents could not increase by two.

    Similarly, suppose that the number of descents decreases by two. Then, we must have $u < x$, $w < v$, $x < w$, and $v < u$. As a result, $u < x$ and $x < w$ implies $u < w$ while $w < v$ and $v < u$ implies $w < u$, a contradiction. Thus, the number of descents cannot decrease by two.

    Therefore, the number of descents can change by $-1$, 0, or 1.

    Suppose that the number of descents increased by 1. Then one of the following scenarios is true:
    \begin{enumerate}
        \item $x \leq u$, $v < w$,  $x > w$, and $v < u \implies v < w < x$.
        \item  $x \leq u$, $v < w$,  $x < w$, and $v > u \implies x < v < w $.
        \item $x \leq u$, $v > w$, $x > w$, and $v > u \implies w < x < v$.
        \item $x > u$, $v < w$, $x > w$, and $v > u \implies v < w < x $.
    \end{enumerate}
    In each case, the triple $\{x,v,w\}$ is a clockwise triple in $\ws$.

    Suppose that the number of descents decreased by 1. Then one of the following scenarios is true:
    \begin{enumerate}
        \item $x \leq u$, $v > w$,  $x < w$, and $v < u \implies x< w < v$.
        \item  $x > u$, $v < w$,  $x < w$, and $v < u \implies v < x < w $.
        \item $x > u$, $v > w$, $x < w$, and $v > u \implies x < w < v$.
        \item $x > u$, $v > w$, $x > w$ and $v < u \implies w < v < x$.
    \end{enumerate}
    In each case, the triple $\{x,v,w\}$ is a counterclockwise triple in $\ws$.
    
\end{proof}

If $x = u$, then the change in descents is still further restricted.  
\begin{lemma} \label{lemma: ChangeInDescentsTrivialPair1}
    Consider the entries $x,u,v,w$ in Figure~\ref{fig:OperatorAction}. If $x = u$, then the number of increases by one if and only if $\{x,v,w\}$ is a clockwise triple in $\ws$, and the number of descents stays the same if and only if $\{x,v,w\}$ is a counterclockwise triple in $\ws$. 
\end{lemma}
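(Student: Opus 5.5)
The plan is a direct case check on the relative order of the three entries $x$, $v$, $w$, in the spirit of Lemma~\ref{lemma: ChangeByTwoDescentsNotAllowed}. The clockwise/counterclockwise convention for $\{x,v,w\}$ is taken from the case lists in that lemma: the orderings $v<w<x$, $x<v<w$, $w<x<v$ are clockwise, and $x<w<v$, $v<x<w$, $w<v<x$ are counterclockwise.

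First I will fix the positions. In $\ws$, column $i$ has $u$ in row $j-1$ and $x$ in row $j$. Column $i'$ has $w$ in row $j-1$ and $v$ in row $j$. After the operator, the boxes in row $j$ carry $v$ in column $i$ and $x$ in column $i'$, while row $j-1$ is untouched. So the only descents that can change are the two boxes in row $j$. Both columns have the same height, so each such descent contributes the same amount $l+1$ to $\maj$. It therefore suffices to compare the number of descents, which is
\[
[x>u]+[v>w] \quad\text{in } \ws, \qquad [v>u]+[x>w] \quad\text{in } \ws'.
\]

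Next, I will show that $x$, $v$, $w$ are pairwise distinct, so that exactly one of the six strict orderings occurs. We have $x\neq v$ because they lie in the same row. We have $x\neq w$ because $w$ is in the row below $x$ and to its right, so the two boxes attack each other. We have $v\neq w$ because otherwise $\ws(i',j)=\ws(i',j-1)$, and then, by definition, $\T{i}{j}$ acts trivially and there is nothing to prove.

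Now substitute $u=x$. The box $(i,j)$ is a trivial pair in $\ws$, so the count in $\ws$ is just $[v>w]$. The change in descents is
\[
[v>x]+[x>w]-[v>w].
\]
Evaluating this on the six orderings gives the result.
\begin{itemize}
\item For each of the three clockwise orderings $v<w<x$, $x<v<w$, $w<x<v$, the change equals $+1$.
\item For each of the three counterclockwise orderings $x<w<v$, $v<x<w$, $w<v<x$, the change equals $0$.
\end{itemize}
Since these two sets of orderings are complementary, this gives both "if and only if" statements at once. As a consistency check, a decrease never occurs, which agrees with Lemma~\ref{lemma: ChangeByTwoDescentsNotAllowed}: a decrease would force a counterclockwise triple, and when $u=x$ no case in that lemma's decrease list survives.

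The only point needing care is justifying the distinctness of $x$, $v$, $w$, in particular the attacking relation between $x$ and $w$ and the use of the trivial-action clause to rule out $v=w$. The rest is a finite table.
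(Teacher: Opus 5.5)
Your proposal is correct and is the same argument as the paper's: a check of the six orderings of $u=x$, $v$, $w$ (Table~\ref{tab:NumOfDesc}). Your formula $[v>x]+[x>w]-[v>w]$ and your argument that $x,v,w$ are pairwise distinct make explicit what the table takes for granted, and they give the correct counts $0\to 1$ for the ordering $u<v<w$, where the paper's table lists $1\to 2$; the change $+1$ is the same, so the conclusion is unaffected.

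One small caveat is that the statement only concerns the two positions $(i,j)$ and $(i',j)$, so your remark that no other descents change is not needed. As written, that remark does not follow from row $j-1$ being untouched, because the operator also swaps entries above row $j$; it would instead rely on Lemmas~\ref{lem:OperatorTop} and~\ref{lem:OperatorMiddle}.
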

\begin{proof}
    Suppose that $x = u$ in Figure~\ref{fig:OperatorAction}. 
    In Table~\ref{tab:NumOfDesc} we see that the number of descents increases by 1 if and only if the triple is increasing in the clockwise direction in $\ws$, and that the number of descents stays the same if and only if the triple is increasing in the counterclockwise direction in $\ws$.
    \begin{table}[h!]
        \centering
        \begin{tabular}{c|c|c}
        Order  & $\ws$ & $\ws'$ \\
        \hline
        $u < v < w$  & 1 & 2 \\
        $w < u < v$  & 1 & 2 \\
        $v < w < u$  & 0 & 1 \\
        \hline 
        $v < u < w$  & 0 & 0 \\
        $w < v < u$  & 1 & 1 \\
        $u < w < v$  & 1 & 1
        \end{tabular}
        \caption{Number of descents at positions $(i,j)$ and $(i',j)$ in the fillings $\ws$ and $\ws'$ respectively.}
        \label{tab:NumOfDesc}
    \end{table}
\end{proof}

 We can now state the total change in coinversion and major index due to $\T{i}{j}$.

\begin{prop} \label{prop: NonTrivialPairSwap}
    Suppose each entry $x, v, u, w$ are as required to apply $\mathcal{T}_{(i,j)}$ to $\ws$ as in Figure~\ref{fig:OperatorAction}. Let $\pi_{\ws}$ be the number of trivial pairs $c_i$ and $c_{i-1}$ such that $c_i$,$d_i$, and $d_{i-1}$ is a coinversion triple in $\ws$ and $\pi_{\ws'}$ be the number of trivial pairs $d_i$ and $d_{i-1}$ such that $d_i$, $c_i$, and $c_{i-1}$ forms a coinversion triple in $\ws'$:
    \begin{enumerate}
        \item If the number of descents increases by one, then $\Delta \coinv = a(i,j) + 1 + \pi_{\ws'} - \pi_{\ws} = a(i,j') + \pi_{\ws'} - \pi_{\ws}$ and $\Delta \maj = l(i,j)+1$.
        \item If the number of descents stays the same and $x,v,w$ is a coinversion triple in $\ws$, then $\Delta \coinv = -1 + \pi_{\ws'} - \pi_{\ws}$ and $\Delta \maj = 0$.
        \item If the number of descents decreases by one, then $\Delta \coinv = -a(i,j) - 1  + \pi_{\ws'} - \pi_{\ws} = -a(i,j')+ \pi_{\ws'}  - \pi_{\ws} $ and $\Delta \maj = -l(i,j)-1$.
        \item[(4)] If the number of descents stays the same and $x,v,w$ is not a coinversion triple in $\Delta \coinv = 1  + \pi_{\ws'} - \pi_{\ws}$ and $\Delta \maj = 0$.
    \end{enumerate}
\end{prop}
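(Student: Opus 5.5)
The plan is to decompose $\Delta\maj$ and $\Delta\coinv$ row by row, following the setup of Figure~\ref{fig:OperatorAction}, and to add up the contributions supplied by the earlier lemmas. The rows above row $j$ split into two kinds. The swapped rows $j+1,\dots,j+k$ are handled by Lemma~\ref{lem:OperatorMiddle}, and the row containing $a,b$ by Lemma~\ref{lem:OperatorTop}. Rows strictly above that are unchanged, so they contribute nothing. Rows below row $j$ are also unchanged, with one exception: the only triples that can change there are the type II triples whose top box is in row $j$ or above. These are exactly the triples accounted for at row $j$.

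By Lemma~\ref{lem:OperatorMiddle}, every middle row contributes $0$ to $\Delta\maj$. Its contribution to $\Delta\coinv$ is $-1$ for each trivial pair $c_i=c_{i-1}$ whose triple $\{c_i,d_i,d_{i-1}\}$ is a coinversion in $\ws$. It is $+1$ for each trivial pair $d_i=d_{i-1}$ whose triple $\{d_i,c_i,c_{i-1}\}$ is a coinversion in $\ws'$, and $0$ otherwise. Summing over $i$ gives exactly $\pi_{\ws'}-\pi_{\ws}$. Lemma~\ref{lem:OperatorTop} adds nothing further. It therefore remains to analyze row $j$, where $x$ and $v$ are exchanged above the fixed entries $u$ and $w$.

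At row $j$, Lemma~\ref{lemma: ChangeByTwoDescentsNotAllowed} shows that the number of descents among the boxes $(i,j),(i',j)$ changes by $-1$, $0$, or $+1$. Since the columns have equal height, $l(i,j)=l(i',j)$, and the change in $\maj$ is $\pm(l(i,j)+1)$ or $0$, which gives the $\Delta\maj$ in each case. For coinversions, I would count separately the triples involving the boxes $(i,j)$ and $(i',j)$ together with a third box $f$.

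\begin{itemize}
\item If $f$ is in row $j$ to the left of column $i$, or in row $j-1$ to the right of column $i'$, then $f$ sees both columns. Its contribution is invariant under the swap, by the same orientation-table argument as in Table~\ref{tab:OrientationTableCC}, since the pair $\{x,v\}$ is merely permuted.
\item If $f$ lies in the columns strictly between $i$ and $i'$, all of which have height at most $j-2$, then $f$ can only be a type I witness in row $j-1$ for $(i,j)$. These $f$ are exactly the boxes of $\armR(i,j)$ other than those shared with $(i',j)$.
\end{itemize}

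For such an $f$, the triple $\{x,f,u\}$ in $\ws$ becomes $\{v,f,u\}$ in $\ws'$. I claim that when a descent is gained (so that $\{x,v,w\}$ is clockwise by Lemma~\ref{lemma: ChangeByTwoDescentsNotAllowed}), each of these $a(i,j)-a(i',j)$ intermediate boxes becomes a coinversion. Together with the triple $\{x,v,w\}$ itself flipping from non-coinversion to coinversion, this yields $a(i,j)+1=a(i,j')$. The loss case is symmetric. In the two cases where the descent count is unchanged, the intermediate contributions cancel, leaving only the $\pm1$ from the flip of $\{x,v,w\}$.

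The step I expect to be the main obstacle is exactly this claim about the intermediate boxes. A descent change should force the relative order of $x,u,v$ to place every intermediate entry $f$ consistently. This needs care, because $f\ne x,v$ is not guaranteed in general, and the comparison uses the standard HHL weight identity $a(i,j)+1=a(i',j)$ for consecutive equal columns.

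To handle it, I would first try a case analysis mirroring Table~\ref{tab:NumOfDesc} together with the scenario lists in Lemma~\ref{lemma: ChangeByTwoDescentsNotAllowed}, inserting $f$ into each ordering. If that becomes unwieldy, my fallback is the classical fact from \cite{HHL08} that swapping $x$ and $v$ in this position changes $q^{\maj}t^{\coinv}$ by the factor $q^{l+1}t^{a+1}$ or its inverse.
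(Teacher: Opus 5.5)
\textbf{There is a genuine gap in your count of coinversions at row $j$; both of your bullet claims are wrong.} Your handling of $\Delta\maj$, and of the rows above row $j$ via Lemmas~\ref{lem:OperatorTop} and~\ref{lem:OperatorMiddle}, is fine.

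First, there are no ``intermediate'' witnesses. Every column strictly between $i$ and $i'$ has height at most $j-2$, so it has no cell in row $j-1$; for $j=1$ there are no such columns at all. Hence $\armR(i,j)=\armR(i',j)$ and $\armL(i',j)=\armL(i,j)\cup\{(i,j)\}$, and your ``$a(i,j)-a(i',j)$ intermediate boxes'' amounts to $-1$ boxes.

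Second, the $a(i,j)$ shared witnesses are \emph{not} invariant under the swap. The orientation-table argument of Lemma~\ref{lem:OperatorTop} concerns fixed tops over swapped bottoms. It gives zero there only because the stopping rule keeps the number of descents in that row unchanged. At row $j$ the situation is reversed: the tops $x,v$ are swapped over fixed bottoms $u,w$, and the descent count changes precisely in cases (1) and (3).

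To see the effect, take a triple with top entry $\alpha$, bottom entry $\beta$ and witness entry $f$. In the HHL convention its coinversion indicator is $2-[\alpha>f]-[f>\beta]-[\beta\ge\alpha]$, so a trivial pair counts as a non-descent. A shared witness $f$ attacks both $(i,j)$ and $(i',j)$, so it is distinct from $x,u,v,w$. Its two triples then change by $[u\ge x]+[w\ge v]-[u\ge v]-[w\ge x]$, which is exactly the change in the number of descents at $(i,j),(i',j)$.

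Here is a concrete instance. Take $\mu=(0,2,2,1)$, fill columns $2,3,4$ of row $1$ with $2,1,3$ and columns $2,3$ of row $2$ with $2,4$, and apply $\T{2}{2}$. This is case (1): a descent is gained. The shared witness $(4,1)$ lies in no coinversion triple before the swap and in one after it, and $\Delta\coinv=2=a(2,2)+1$. Your bookkeeping yields only the $+1$ from the flip of $\{x,v,w\}$.

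Replacing your two bullets with the identity above repairs the argument:
\begin{itemize}
\item the $a(i,j)$ shared witnesses contribute $a(i,j)$ times the change in descents;
\item the extra witness $(i,j)$ of $(i',j)$ contributes the $\pm1$ flip of $\{x,v,w\}$;
\item Lemma~\ref{lemma: ChangeByTwoDescentsNotAllowed} matches the sign of that flip with the descent change.
\end{itemize}
Together these give all four cases uniformly, with the $\pi_{\ws'}-\pi_{\ws}$ term coming from Lemma~\ref{lem:OperatorMiddle} as you said.

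That direct triple count would be a legitimate alternative to the paper's proof. The paper instead uses the HHL relation expressing coinversions through arms of descents and attacking inversions, so a descent gained at $(i,j)$ or $(i',j)$ contributes $a(i,j)$ or $a(i',j)$. It then tracks the inversions among $x,u,v,w$ case by case. As written, though, your proposal puts the $a(i,j)$ in the wrong place.

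Your fallback does not close the gap either. As stated it is false in cases (2) and (4), where the change in $q^{\maj}t^{\coinv}$ is $t^{\mp1}$ rather than $q^{\pm(l+1)}t^{\pm(a+1)}$.
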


\begin{proof}

First, note that the operators $\T{i}{j}$ are involutions. As a result, $(1)$ holds if and only if $(3)$ holds because $\T{i}{j}(\ws') = \ws$ gains (loses) a descent if and only if $\T{i}{j}(\ws)$ loses (gains) a descent. That is, if $\ws'$ has one less descent than $\ws$, then $\Delta \coinv = \coinv(\ws') - \coinv(\ws) = -(\coinv(\ws) - \coinv(\ws')$. Then, assuming $(1)$, we have $\T{i}{j}(\ws') = \ws$ has one more descent than $\ws'$; hence, $\coinv(\ws) - \coinv(\ws') = a(i,j) + 1 + \pi_{\ws} - \pi_{\ws'}$ implying $\coinv(\ws') - \coinv(\ws) = -a(i,j) - 1 + \pi_{\ws'} - \pi_{\ws}$. Similar reasoning works for the change in major index. Thus, $(1)$ implies $(3)$. In the same way, $(2)$ implies $(1)$. Thus, it is enough to show that $(1)$ and $(2)$ hold. 

For the proof, we first show that the major index changes according to $(1)$ and $(2)$. Then, we show that the coinversion changes as claimed by breaking it up into two cases. 

The major index is defined as a sum over the descents of a filling. Since both columns have the same height, only the change in the number of descents matters. By Lemmas~\ref{lem:OperatorTop} and~\ref{lem:OperatorMiddle}, the major index is unchanged in each row above row $i$, which is only possible if the number of descents does not change in these rows. Thus, only the boxes in row $i$ that contain entries $x$ and $v$ can change the number of descents. Since both columns have the same height, the change in major index is either $0$ or $l(i,j)+1$, depending on whether the number of descents stays the same or increases by one.  This shows that the $\Delta \maj$ behaves as claimed in parts one and two. 

Notice that the change in coinversions is the sum of the change in coinversions at each row. By Lemma~\ref{lem:OperatorMiddle}, each of the middle rows contributes a loss of a coinversion to $\Delta \coinv$ if $c_i = c_{i-1}$ and $\{c_i, d_{i}, d_{i-1}\}$ is a type I coinversion triple in $\ws$. Similarly, by the same lemma, each middle row adds one to $\Delta \coinv$ if $d_i$ and $d_{i-1}$ form a trivial pair and $d_i$, $c_{i}$, and $c_{i-1}$ form a coinversion in $\ws'$. The total contribution from the middle entries is $\pi_{\ws'} -\pi_{\ws}$. The remaining change in coinversion is due to swapping $x$ and $u$. The remaining proof is split into two parts to determine the change in coinversions due to transposing $x$ and $u$ at row $i$. Each part focuses on where the descents are located in $\ws$ and $\ws'$ and the inversions related to the entries $x,v,u,w$. 

\begin{enumerate}
    \item
    Suppose the number of descents increases by one. Then the triple $x,v,w$ is not a coinversion by lemma~\ref{lemma: ChangeByTwoDescentsNotAllowed}. Since only one descent can be gained at a time by Lemma~\ref{lemma: ChangeByTwoDescentsNotAllowed}, we break up this case into sub-cases depending on where the descent is gained.
        
    Assume that a descent is gained at $(i,j)$. Then $x \leq u$. Note that $x = u$ is possible because the number of descents increases by one. Since a descent is gained, there is at least an increase by $a(i,j)$ in the change in coinversions. It remains to count the change in inversions. There are two ways to gain a descent at $(i,j)$: first, $u < v$ and $v<w$; second, $u< v$, $w < v$, and $w < x$.

    \begin{enumerate}
        \item Let $u < v$ and $v< w$, then $x < v$. As a result, $x$ and $v$ form an inversion in $\ws$ but $x$ and $w$ do not. Also, $v$ and $w$ do not form an inversion in $\ws'$. So, $x$ and $v$ do not form an inversion in $\ws'$. A decrease in the inversion set by one contributes a plus one to the change in coinversion. Thus, the contribution to $\Delta \coinv$ is $a(i,j)+1  = a(i,j')$. 
        \item Let $u < v$, $w < v$, and $w < x$, then $x < v$. Like previously, the only change in inversions is between $x$ and $v$, which means $\ws'$ has one less inversion. Thus, the contribution to $\Delta \coinv$ is $a(i,j)+1  = a(i,j')$.
    \end{enumerate}

    Assume that a descent is gained at $(i,j')$. Then $v < w$. Since the descent is gained at $(i,j')$, there is a contribution of $a(i,j') = a(i,j) + 1$ to the change in coinversions. There are two ways to gain descent at $(i,j')$ either $x \leq u$ and $w < x$ or $u<x$, $w<x$, and $u < v$.

    \begin{enumerate}
        \item Let $x \leq u$ and $w< x$ then $v < x$. There is one inversion involving $x$ and $w$ in $\ws$, and there is one inversion involving $v$ and $x$ in $\ws'$. Thus, there is no change in the number of inversions. Thus, the contribution to $\Delta \coinv$ is $a(i,j)+1  = a(i,j')$. 
        \item Let $u<x$, $w<x$, and $u < v$ then $v < x$. The pair $x$ and $w$ form an inversion in $\ws$, and the pair $x$ and $u$ form an inversion in $\ws'$. This is every inversion involving $x$,$u$,$v$,$w$. Thus the change due to the change of inversions is 0. Thus, the contribution to $\Delta \coinv$ is $a(i,j)+1  = a(i,j')$.
    \end{enumerate}

    This completes the proof for the first part. Therefore, $\Delta \coinv = a(i,j) + 1 + \pi_{\ws'} - \pi_{\ws} = a(i,j') + \pi_{\ws'} - \pi_{\ws}$ and $\Delta \maj = l(i,j)+1$.

    \item Suppose the number of descents stays the same and entries $x$, $v$, and $w$ form a coinversion triple in $\ws$. The change in coinversions depends upon the manner in which the descents stay the same. The number of descents in $\ws$ can be 0, 1, or 2 in positions $(i,j)$ and $(i,j')$. We investigate each possibility below.

    \begin{enumerate}
        \item Assume that the number of descents in $\ws$ is 0 then the number of descents in $\ws'$ is also 0. Thus, $x \leq u$, $v < w$, $v < u$, and $x < w$. Since $x$, $v$, and $w$ form a counterclockwise triple in $\ws$ and both $x$ and $v$ are less than $w$, $v < x < w$. Since there are no descents in $\ws$ and $\ws'$ at $(i,j)$ and $(i,j')$, there is no contribution to the change in coinversions coming from the arm of either box. It remains to count the change in inversions. In $\ws$, there are no inversions. In $\ws'$, the entries $x$ and $v$ form the only inversion; hence, the number of inversions increases by one. As a result, the contribution to the change in coinversions is $-1$.
        
        \item Assume that the number of descents in $\ws$ is 1. The descent is either at $(i,j)$ or $(i,j')$. Since $x,v,w$ forms a coinversion triple in $\ws$, either $v < x < w$ or $x < w < v$. For $\ws'$, the descent is in position $(i,j)$ in the former case, and in the latter case the descent is in position $(i,j')$.
            
        Suppose $v < x < w$. Then only the change in inversions contributes to the change in coinversions. In $\ws$, there are no inversions. In $\ws'$, $x$ and $v$ form an inversion. Since the inversions increased by 1, the coinversions decreased by 1. 

        Suppose $x < w < v$ then descent is at $(i,j')$ in $\ws$ and at $(i,j)$ in $\ws'$. In $\ws$, the descent contributes $a(i,j') = a(i,j)+1$ to the coinversions. In $\ws'$, the descent contributes $a(i,j)$ to the coinversions. Since the descent is in a different position, the contribution to the change in coinversions so far is $a(i,j) - (a(i,j)+1) = -1$. As for the inversions, notice that $x$ and $v$ form an inversion in $\ws$ and $v$ and $w$ form an inversion in $\ws'$. Thus, the number of inversions does not change. As a result, the total contribution to the change in the coinversions is $-1$.
        
        \item Assume that the number of descents in $\ws$ is 2. Since $x$, $v$ and $w$ forms a coinversion in $\ws$, the relative order must be $w < v < x$. Since there is a descent at both positions, the arms of the boxes do not contribute to the change in the coinversions. Notice that $x$ and $w$ form an inversion in $\ws$ while $x$ and $v$ as well as $v$ and $w$ form an inversion in $\ws'$. Since the number of inversions increased by one, the number of coinversion decreased by 1.
        \end{enumerate}
        This completes all the cases for part two. Thus, $\Delta \coinv = -1 + \pi_{\ws'} - \pi_{\ws}$ and $\Delta \maj = 0$     
    \end{enumerate}
    
\end{proof}

In the particular, if $x=u$ in Figure~\ref{fig:OperatorAction}, then only $(1)$ or $(2)$ occurs in Proposition~\ref{prop: NonTrivialPairSwap}.
\begin{corollary} \label{Cor:trivialpair}
    Suppose $\ws$ is a filling so that $\mathcal{T}_{(i,j)}(\ws) = \ws'$ where $\ws(i,j) = \ws(i-1,j) = u$. 
    Let $\pi_{\ws}$ be the number of trivial pairs $c_i$ and $c_{i-1}$ such that $c_i$,$d_i$, and $d_{i-1}$ 
    is a coinversion triple in $\ws$ and $\pi_{\ws'}$ be the number of trivial pairs $d_i$ and $d_{i-1}$ 
    such that $d_i$, $c_i$, and $c_{i-1}$ forms a coinversion triple in $\ws'$. Then, we have the following cases:
    \begin{enumerate}
        \item If the number of descents increases by one, then $\Delta \coinv = a(i,j) + 1 + \pi_{\ws'} 
        - \pi_{\ws} = a(i,j') + \pi_{\ws'} - \pi_{\ws}$ and $\Delta \maj = l(i,j)$.
        \item If the number of descents stays the same and $x,v,w$ is a coinversion triple in $\ws$, then 
        $\Delta \coinv = -1 + \pi_{\ws'} - \pi_{\ws}$ and $\Delta \maj = 0$.
    \end{enumerate}
    
\end{corollary}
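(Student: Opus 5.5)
The corollary is Proposition~\ref{prop: NonTrivialPairSwap} specialized to $x=u$, with $\Delta\maj$ recomputed. I will first restrict the possible cases using Lemma~\ref{lemma: ChangeInDescentsTrivialPair1}, then reuse the earlier proof for the coinversion count, and finally adjust the major index because the box $(i,j)$ is a trivial pair in $\ws$.

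\textbf{Step 1: only two cases occur.} When $x=u$, Lemma~\ref{lemma: ChangeInDescentsTrivialPair1} says the number of descents either increases by one, exactly when $\{x,v,w\}$ is clockwise in $\ws$, or stays the same, exactly when $\{x,v,w\}$ is counterclockwise in $\ws$. A decrease never happens. Table~\ref{tab:NumOfDesc} lists the six orderings of $u,v,w$. The three ``stay the same'' orderings are $v<u<w$, $w<v<u$ and $u<w<v$; these are precisely the cases where $\{x,v,w\}$ is a coinversion triple. So case (4) of Proposition~\ref{prop: NonTrivialPairSwap} is also excluded, and only cases (1) and (2) remain.

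\textbf{Step 2: coinversions.} The middle rows are unchanged from the general situation, so Lemmas~\ref{lem:OperatorTop} and~\ref{lem:OperatorMiddle} still give the top row contributing $0$ and the middle rows contributing $\pi_{\ws'}-\pi_{\ws}$. For row $j$, I will rerun the subcase analysis of Proposition~\ref{prop: NonTrivialPairSwap}, restricted to the subcases compatible with $x=u$:
\begin{itemize}
\item In part (1), the subcases are ``gain at $(i,j)$ with $u<v<w$ or $w<u<v$'' and ``gain at $(i',j)$ with $x\le u$, $w<x$''.
\item In part (2), they are the zero-, one- and two-descent cases.
\end{itemize}
That earlier proof already allowed $x=u$ where relevant. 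I will check that the inversion counts among $x,v,w$ are unchanged when $x=u$, which holds because equal entries never form an inversion. The descent at $(i,j)$ in $\ws'$ again contributes its arm $a(i,j)$, and in $\ws$ the box $(i,j)$ contributes nothing. This yields $\Delta\coinv = a(i,j)+1+\pi_{\ws'}-\pi_{\ws}$ in case (1) and $-1+\pi_{\ws'}-\pi_{\ws}$ in case (2).

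\textbf{Step 3: major index, the main obstacle.} Here the stated value $l(i,j)$ differs from the $l(i,j)+1$ of Proposition~\ref{prop: NonTrivialPairSwap}. I will compare the descents at $(i,j)$ and $(i',j)$ directly, using the rows of Table~\ref{tab:NumOfDesc} with one descent in $\ws$ and two in $\ws'$. A gained descent adds $l(i,j)+1$, so the discrepancy of one must come from elsewhere. My first attempt is to locate it in the row above: in $\ws$ the box $(i,j+1)$ sits over a trivial pair, and the operator's rule forcing a swap when $\ws(i,j)=\ws(i,j+1)$ may move a descent between heights. Failing that, I will check whether the intended convention treats the trivial-pair box $(i,j)$, which has $\ws(u)=\ws(d(u))$ in $\ws$, as not carrying the usual $+1$ in the weight bookkeeping. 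I expect this reconciliation of $\Delta\maj = l(i,j)$ to be the delicate point, and I would verify it against a small example such as $\mu=(0,2,2,2)$ before writing the final argument.
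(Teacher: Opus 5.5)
Your Steps 1 and 2 follow the paper's argument. Its proof only invokes Lemma~\ref{lemma: ChangeInDescentsTrivialPair1} to exclude cases (3) and (4) of Proposition~\ref{prop: NonTrivialPairSwap}, then reads off cases (1) and (2). Your re-run of the row-$j$ subcases with $x=u$ is a more explicit version of the same thing. You also rightly read the hypothesis $\ws(i,j)=\ws(i-1,j)$ as the intended $x=u$, i.e.\ $\ws(i,j)=\ws(i,j-1)$.

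The gap is Step 3. You never establish $\Delta\maj=l(i,j)$, and neither of your avenues can succeed, because that value is a misprint; the correct value is $l(i,j)+1$.

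\emph{Rows above $j$.} These cannot supply a compensating $-1$. Lemmas~\ref{lem:OperatorTop} and~\ref{lem:OperatorMiddle}, which you already use for coinversions, also assert zero change in the major index. More directly, columns $i$ and $i'$ have the same height and every swap is horizontal within one row, so no descent can change height. In the forced swap with $\ws(i,j+1)=\ws(i,j)$, the trivial pair $c_1$ over $x$ simply moves as a unit into column $i'$ and remains a non-descent.

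\emph{A convention.} There is none to appeal to. $\maj$ is the sum of $l(u)+1$ over descents, a trivial pair is never a descent, and neither $\wtqt$ nor $\wt_m$ discounts the $+1$.

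Hence a net gain of one descent in row $j$, at a box of leg $l(i,j)=l(i',j)$, gives $\Delta\maj=l(i,j)+1$. This is what Proposition~\ref{prop: NonTrivialPairSwap}(1) yields, what Corollary~\ref{cor:ChangeInCandMSpFunc}(1) records when $x=u$, and what the computation in Section~\ref{section:Compressing} uses (the factor $q^{l(i,j)+1}$).

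The check you suggested confirms this. In Figure~\ref{fig:CompressionExample}, the first two fillings are related by $\T{3}{2}$ with $x=u=1$, $v=3$, $w=4$. Their weights differ by the factor $qt^2(1-t)/(1-qt^2)$, so $\Delta\maj=1=l(3,2)+1$, while $l(3,2)=0$.

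Your proof should therefore end by proving case (1) with $\Delta\maj=l(i,j)+1$, not by searching for the missing $-1$.
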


\begin{proof}
    By Lemma~\ref{lemma: ChangeInDescentsTrivialPair1}, the descents can only increase by one if and only if $\{x,v,w\}$ increases clockwise, or the number of descents stays the same if and only if $\{x,v,w\}$ increases counterclockwise. This limits the possible change in coinversions and major index to the first two cases of Proposition~\ref{prop: NonTrivialPairSwap}.
\end{proof}

In Section 6, we apply the operators $\T{i}{j}$ to certain mixed nonattacking fillings. Acting on these mixed nonattacking fillings gets rid of the extra $\pi_{\ws'}-\pi_{\ws}$ term occurring in the change in coinversions. As a result, the proof becomes easier to track. 
\begin{definition}\label{def:SpecialRestFunc}
    A \emph{special restriction function} $R$ is a restriction function such that 
    \begin{enumerate} \label{list: ConditionsOnR}
    \item $R(i',j') = R(i,j') \cup \{(i,j'-1)\}$ for all $\mu_{i} \ge j' > j$ and 
    \item $R(i,j) = R(i',j)$.
\end{enumerate}
\end{definition}
Let $\ws$ be a mixed nonattacking filling for a special restriction function $R$. 
\begin{remark}
    The action of the operator $\T{i}{j}$ results in a mixed-nonattacking filling since it preserves trivial pairs in every box except $(i,j)$.
\end{remark}
By the choice of $R$, trivial pairs in columns $i$ and $i'$ above row $j$ of $\ws$ or $\ws'$ do not count towards coinversion triples. As a result, we have the following corollary.

\begin{corollary} \label{cor:ChangeInCandMSpFunc}
    Suppose each entry $x, v, u, w$ is as required to apply $\mathcal{T}_{(i,j)}$ to the mixed nonattacking filling $\ws$ as in Figure~\ref{fig:OperatorAction}. Then, the change to the coinversion triples and major index is one of the following:
    \begin{enumerate}
        \item If the number of descents increases by one, then $\Delta \coinv = a(i,j) + 1 = a(i,j')$ and $\Delta \maj = l(i,j)+1$.
        \item If the number of descents stays the same and $x,v,w$ is a coinversion triple in $\ws$, then $\Delta \coinv = -1$ and $\Delta \maj = 0$.
        \item If the number of descents decreases by one, then $\Delta \coinv = -a(i,j) - 1  = -a(i,j') $ and $\Delta \maj = -l(i,j)-1$.
        \item If the number of descents stays the same and $x,v,w$ is not a coinversion triple in $\Delta \coinv = 1 $ and $\Delta \maj = 0$.
    \end{enumerate}
    In particular, if $x = u$, then only $(1)$ or $(2)$ are possible.
\end{corollary}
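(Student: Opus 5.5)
The plan is to deduce Corollary~\ref{cor:ChangeInCandMSpFunc} directly from Proposition~\ref{prop: NonTrivialPairSwap} and Corollary~\ref{Cor:trivialpair}. Those results already give the claimed changes in $\maj$ and $\coinv$, except for the correction term $\pi_{\ws'}-\pi_{\ws}$ in $\Delta\coinv$. So the whole task is to show that, when $\ws$ is a mixed nonattacking filling for a special restriction function $R$ and the coinversion count is the mixed one ($c(u)-r(u)$, i.e.\ trivial coinversion triples discarded), this correction vanishes. The $\maj$ statements carry over verbatim, since the restriction function does not enter the major index.

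First, recall where $\pi_{\ws}$ and $\pi_{\ws'}$ come from in Lemma~\ref{lem:OperatorMiddle}. They count rows $r>j$ among the swapped rows in which one of the columns $i,i'$ has a trivial pair and the type II triple across the two columns is a coinversion. In $\ws$ this is the pair $c_r=c_{r-1}$ in column $i$ with the triple $\{c_r,d_r,d_{r-1}\}$ anchored at the box $(i',r)$. In $\ws'$ it is the pair in column $i$ with entries $d_r=d_{r-1}$ and the triple $\{d_r,c_r,c_{r-1}\}$ anchored at $(i',r)$.

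Next, use condition (1) of Definition~\ref{def:SpecialRestFunc}: for $r>j$, $(i,r-1)\in R(i',r)$. Hence the box $v=(i,r)$ satisfies $d(v)\in R(i',r)$. Whenever $v$ is a trivial pair and $\{v,(i',r),(i',r-1)\}$ is a type II coinversion triple, it is by Definition~\ref{def:TrivialPairOfBox} a trivial coinversion triple, counted in $r(i',r)$ and therefore subtracted in $c(u)-r(u)$. Thus every row counted by $\pi_{\ws}$ (respectively $\pi_{\ws'}$) contributes one coinversion that the mixed count removes. With the mixed count, rows (3) and (4) of Lemma~\ref{lem:OperatorMiddle} contribute $0$, and the middle contribution $\pi_{\ws'}-\pi_{\ws}$ disappears.

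Then check that nothing else shifts. Condition (2), $R(i,j)=R(i',j)$, together with the remark that $\T{i}{j}$ preserves trivial pairs outside $(i,j)$, gives two facts:
\begin{itemize}
\item the trivial coinversion triples attached to the boxes in row $j$ are the same before and after the swap, so $r$ agrees at $(i,j)$ and $(i',j)$;
\item for $r>j$, the set $R(i,r)\subset R(i',r)$ differs only by $(i,r-1)$, which is exactly the box treated above.
\end{itemize}
The top row is handled by Lemma~\ref{lem:OperatorTop}, which still applies because no trivial pair is formed there: otherwise the operator acts trivially.

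This yields cases (1)--(4). The final clause, that for $x=u$ only (1) and (2) occur, is Lemma~\ref{lemma: ChangeInDescentsTrivialPair1} exactly as in Corollary~\ref{Cor:trivialpair}.

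The main obstacle is the bookkeeping in the third step. I must confirm that trivial pairs in column $i'$ (the $\ws'$ case) also get discounted. After the swap those entries live in column $i$, anchored at column $i'$, so condition (1) applies. I also need to confirm that no trivial coinversion triple outside columns $i,i'$ changes status. For that, I would argue that $R(u)$ for other boxes $u$ meets columns $i,i'$ only in rows where the trivial-pair status is preserved by the operator.
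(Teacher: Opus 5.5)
Your handling of the rows above $j$ is correct and matches the paper's one-line justification. Condition (1) of Definition~\ref{def:SpecialRestFunc} puts $(i,r-1)$ in $R(i',r)$, so every triple counted by $\pi_{\ws}$ or $\pi_{\ws'}$ is a trivial coinversion triple and drops out of $\sum_u (c(u)-r(u))$.

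The gap is your first bullet about row $j$. Condition (2), together with the fact that trivial pairs left of column $i$ are untouched, only shows that the \emph{same} boxes $f$ are eligible before and after the swap (trivial pairs with $d(f)\in R(i,j)=R(i',j)$). It says nothing about whether $\{f,(i,j),(i,j-1)\}$ and $\{f,(i',j),(i',j-1)\}$ are coinversions, and that does change. The (entry, entry below) pairs at the two anchors go from $(x,u),(v,w)$ to $(v,u),(x,w)$.

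In fact, for every $f$ in the left arm of $(i,j)$, the number of coinversions among these two triples changes by exactly the change in the number of descents at $(i,j)$ and $(i',j)$. This per-box identity is where the $a(i,j)$ in Proposition~\ref{prop: NonTrivialPairSwap} comes from. It is also what really makes the top row safe: Lemma~\ref{lem:OperatorTop} is proved one $f$ at a time, and descents do not change in that row. Applying the identity to the $t(i,j)$ trivial boxes $f$ shows that, with your fully mixed count, cases (1) and (3) come out as $\pm\bigl(a(i,j)+1-t(i,j)\bigr)$, not $\pm\bigl(a(i,j)+1\bigr)$.

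Here is a concrete counterexample to the row-$j$ claim. Take $\mu=(0,2,2,2,0,0)$, $(i,j)=(3,2)$, $i'=4$, and $R(3,2)=R(4,2)=\{(2,1)\}$ with $R$ empty elsewhere; this $R$ is special. Let $\ws(2,1)=\ws(2,2)=2$, $\ws(3,1)=\ws(3,2)=1$, $\ws(4,1)=4$, $\ws(4,2)=3$. Then $x=u=1$, $v=3$, $w=4$, and $\ws'(3,2)=3$, $\ws'(4,2)=1$, so one descent is gained.

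In $\ws'$ the triple $\{(2,2),(4,2),(4,1)\}$ satisfies $\ws'(4,2)<\ws'(2,2)<\ws'(4,1)$. Since $(2,2)$ is a trivial pair with $(2,1)\in R(4,2)$, this is a trivial coinversion triple, and it is absent from $\ws$. So the mixed count rises by $1=a(3,2)+1-t(3,2)$, not by $2$.

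The paper's own later use of the corollary confirms this shift. The ratio $\wt_m(\ws')/\wt_m(\ws)$ in the computation before Proposition~\ref{prop:InductionStep} carries $t^{a(i,j)+1-t(i,j)}$. The paper's justification discounts only trivial pairs in columns $i,i'$ \emph{above} row $j$. Row $j$ is therefore left exactly as in Proposition~\ref{prop: NonTrivialPairSwap}, and the $-t(i,j)$ is absorbed into the weight computation.

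To repair your argument, you can do either of the following:
\begin{itemize}
\item Adopt the paper's reading of $\Delta\coinv$, discounting only the trivial triples above row $j$.
\item Keep the fully mixed count and assume $t(i,j)=0$. This holds in the construction for Theorem~\ref{thm:MainResult}, where only right columns of pairs are ever restricted.
\end{itemize}

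Your final concern has the same character. Definition~\ref{def:SpecialRestFunc} constrains $R$ only on columns $i$ and $i'$, so you cannot derive from it that no other box is restricted by a box of those columns. That fact has to be added as a hypothesis or imported from the setting of Theorem~\ref{thm:MainResult}.
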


\section{Compressing Nonattacking Fillings} \label{section:Compressing}

In this section, we prove Theorem~\ref{thm:MainResult}. The compositions $\mu$ that we consider satisfy the condition that each box in $\dg'(\mu)$ hits at most one box. That is, $\mu$ is a composition such that $\mu_i = \mu_{i'}$ where $i < i'$ and $\mu_c < j-1$ for all $i < c < i'$ where $1 \le j \le \mu_i$. If $i' = i+1$, then the $1 \le j \le \mu_i$. Let $j$ be such that $(i,j)$ is a box in column $i$ with the property that $\mu_c < j-1$ for all $i < c < i'$.   Recall that, given $\ws$, we write $\T{i}{j})(\ws) = \ws'$ whenever the context is clear. Throughout this section we refer the reader to Figure~\ref{fig:OperatorAction} for a generic setup of this.

Before we prove Theorem~\ref{thm:MainResult}, we show how the sum of the $q,t$-weights of $\ws$ and $\ws'$ simplifies. In the computation that follows, we assume that $x= u$, so that they form a trivial pair, and that $\ws$ is a mixed nonattacking filling for a special restriction function $R$ as in Definition~\ref{def:SpecialRestFunc}. We show how a pair of mixed nonattacking fillings for $R$ compress into a single term.
=

If $\{u,v,w\}$ is not a coinversion triple in $\ws$, then by Corollary~\ref{cor:ChangeInCandMSpFunc} we have
\begin{align*} 
    \wt_m(\ws') + \wt_m(\ws) &= \Big(1 + q^{l(i,j)+1}t^{a(i,j) + 1 -t(i,j)} \dfrac{1-t}{1-q^{l(i,j)+1}t^{a(i,j)+1-t(i,j)}} \Big)\wt_m(\ws) \\ &=\Big( \dfrac{1-q^{l(i,j)+1}t^{a(i,j)+1-t(i,j)}}{1-q^{l(i,j)+1}t^{a(i,j)+1-t(i,j)}} + \dfrac{q^{l(i,j)+1}t^{a(i,j) + 1 -t(i,j)}-q^{l(i,j)+1}t^{a(i,j) + 1 - t(i,j)+1}}{1-q^{l(i,j)+1}t^{a(i,j)+1-t(i,j)}} \Big)\wt_m(\ws)  \\&= \Big( \dfrac{1-q^{l(i,j)+1}t^{a(i,j)+1-t(i,j)+1}}{1-q^{l(i,j)+1}t^{a(i,j)+1-t(i,j)}} \Big) \wt_m(\ws) \\
    &= \Big( \dfrac{1-q^{l(i,j)+1}t^{a(i',j)+1-t(i',j)}}{1-q^{l(i,j)+1}t^{a(i,j)+1-t(i,j)}} \Big) \wt_m(\ws).
\end{align*}
Notice that the numerator $1-q^{l(i,j)+1}t^{a(i',j)+1-t(i',j)}$ is a common factor of the denominator of $\wtqt(\ws)$. Specifically, it is the contribution to the product by box $(i',j)$. The last equality holds because the arm of $(i',j)$ is exactly one larger than the arm of $(i,j)$, $a(i,j) + 1 = a(i',j)$, and $t(i,j) = t(i',j)$ since $R(i,j) = R(i',j)$. 

Similarly, if $\{u,v,w\}$ is a coinversion triple, then by Corollary~\ref{cor:ChangeInCandMSpFunc} we have
\begin{align*} 
    \wt_m(\ws') + \wt_m(\ws) &= \Big(1 + t^{-1} \dfrac{1-t}{1-q^{l(i,j)+1}t^{a(i,j)+1-t(i,j)}} \Big)\wt_m(\ws) \\ &=  t^{-1} \Big(t + \dfrac{1-t} {1-q^{l(i,j)+1}t^{a(i,j)+1-t(i,j)}} \Big)\wt_m(\ws) \\ &= \Big( \frac{t -q^{l(i,j)+1}t^{a(i,j)+1-t(i,j) + 1}}{1-q^{l(i,j)+1}t^{a(i,j)+1-t(i,j)}} + \frac{1-t}{1-q^{l(i,j)+1}t^{a(i,j)+1-t(i,j)}} \Big)\wt_m(\ws) \\ &= t^{-1} \Big( \dfrac{1-q^{l(i,j)+1}t^{a(i,j)+1-t(i,j)+1}}{
    1-q^{l(i,j)+1}t^{a(i,j)+1-t(i,j)}} \Big) \wt_m(\ws) \\ &= t^{-1} \Big( \dfrac{1-q^{l(i,j)+1}t^{a(i',j)+1-t(i',j)}}{
    1-q^{l(i,j)+1}t^{a(i,j)+1-t(i,j)}} \Big) \wt_m(\ws).
\end{align*}
Like the previous case, the numerator term cancels with the corresponding denominator term and, unlike the previous case, the power of $t$ decreases by 1. The $t^{-1}$ removes the coinversion triple $\{x,v,w\}$ from the set of coinversions as depicted in Figure~\ref{fig:OperatorAction}.  

The main idea to prove Theorem~\ref{thm:MainResult} is to use the previous computation to show that two explicit sets of mixed nonattacking fillings generate the same polynomial. Suppose $R$ is a special restriction function on $\dg'(\mu)$ as specified by Definition~\ref{def:SpecialRestFunc}.
We construct a different special restriction function $R'$ that depends on $R$:
\begin{enumerate}
    \item $R'(u) = R(u)$ for $u \neq (i',j) \in \dg'(\mu)$ and 
    \item $R(i',j) = R(i,j) \cup \{(i,j-1)\}$.
\end{enumerate}
The resulting mixed nonattacking fillings are $\MF(\mu,R(u))$ and $\MF(\mu,R'(u))$. Then, the following proposition holds.

\begin{prop} \label{prop:InductionStep}
    The generating function for mixed nonattacking fillings $\MF(\mu,R(u))$ equals the generating function for the mixed nonattacking fillings for $\MF(\mu,R'(u))$. That is,
    $$ \sum_{\ws \in \MF(\mu,R(u))} \wt_m(\ws)x^{\sigma} = \sum_{\ws \in \MF(\mu,R'(u))} \wt_{m'}(\ws)x^{\sigma},$$
    where $\wt_{m'}(\ws)$ is the mixed $q,t$-weight for the fillings in $\MF(\mu,R'(u))$.
\end{prop}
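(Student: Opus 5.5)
Split $\MF(\mu,R(u))$ into two disjoint parts: the fillings that also lie in $\MF(\mu,R'(u))$, and the \emph{excess} fillings that do not. Since $R'$ differs from $R$ only at $(i',j)$, where it adds the box $(i,j-1)$, a filling $\ws\in\MF(\mu,R(u))$ is excess exactly when $\ws(i',j)=\ws(i,j-1)$. In the notation of Figure~\ref{fig:OperatorAction}, $\ws(i,j-1)=u$ and $\ws(i',j)=v$, so excess means $v=u$. The plan is to pair each excess filling with a filling in the common part via $\T{i}{j}$, and use the two computations preceding the proposition to absorb the excess weight into the partner's modified weight.

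\textbf{Step 1 (the matching).} Let $\ws\in\MF(\mu,R'(u))$ have a trivial pair at $(i,j)$, i.e.\ $x=u$, and suppose $\T{i}{j}$ acts nontrivially on it. Then $\ws'=\T{i}{j}(\ws)$ has $\ws'(i',j)=x=u=\ws'(i,j-1)$, so $\ws'$ is an excess filling. Conversely, applying the involution $\T{i}{j}$ to an excess filling returns a filling of $\MF(\mu,R'(u))$ with a trivial pair at $(i,j)$. I must check the following.
\begin{itemize}
\item[(a)] Nonattackingness and the $R$-restrictions are preserved by $\T{i}{j}$. This should follow from the remark after Definition~\ref{def:SpecialRestFunc}, together with the hypothesis that each box hits at most one box, which guarantees that no other restriction set involves columns $i,i'$ in these rows.
\item[(b)] The degenerate cases where $\T{i}{j}$ acts trivially cannot produce excess fillings, or are otherwise excluded. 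For example, $\ws(i',j)=\ws(i',j-1)$ would give $u=w$, which is impossible by nonattacking in row $j-1$.
\end{itemize}
The monomial $x^\sigma$ is preserved because $\T{i}{j}$ only swaps entries within rows.

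\textbf{Step 2 (weights).} For a matched pair $(\ws,\ws')$ with $x=u$, Corollary~\ref{cor:ChangeInCandMSpFunc} restricts us to cases (1) and (2). The displayed computations then give
\[
\wt_m(\ws)+\wt_m(\ws')=t^{-\epsilon}\,\frac{1-q^{l(i,j)+1}t^{a(i',j)+1-t(i',j)}}{1-q^{l(i,j)+1}t^{a(i,j)+1-t(i,j)}}\,\wt_m(\ws),
\]
where $\epsilon=1$ if $\{u,v,w\}$ is a coinversion triple and $\epsilon=0$ otherwise. I then compare this with $\wt_{m'}(\ws)$, which differs from $\wt_m(\ws)$ only in the factor at the box $(i',j)$:
\begin{itemize}
\item Under $R'$, the box $(i,j)$ with $x=u$ is a trivial pair of $(i',j)$. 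So $t(i',j)$ increases by one, and the denominator $1-q^{l+1}t^{a(i',j)+1-t(i',j)}$ is replaced by $1-q^{l+1}t^{a(i,j)+1-t(i,j)}$, using $a(i',j)=a(i,j)+1$ and $R(i,j)=R(i',j)$. This is exactly the ratio above.
\item The triple $\{u,v,w\}$, i.e.\ $\{\ws(i,j),\ws(i',j),\ws(i',j-1)\}$, becomes a trivial coinversion triple, so $r(i',j)$ increases by $\epsilon$. This accounts for $t^{-\epsilon}$.
\end{itemize}
Hence $\wt_m(\ws)+\wt_m(\ws')=\wt_{m'}(\ws)$.

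\textbf{Step 3 (unmatched fillings).} The remaining fillings of $\MF(\mu,R'(u))$ either have no trivial pair at $(i,j)$, or are fixed by $\T{i}{j}$. For these, $t(i',j)$ and $r(i',j)$ are unchanged, so $\wt_{m'}=\wt_m$. The fixed-point case needs a separate argument that no excess partner is missed. Summing over all classes gives the identity.

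\textbf{Main obstacle.} The main obstacle is Step 1(b), together with the trivial-action clause of $\T{i}{j}$: when a trivial pair would form in column $i'$ during the swapping, the operator acts trivially. I must show that this never happens for fillings with $x=u$ whose image would be excess, or that such fillings contribute consistently. I would first try to prove that for $\ws\in\MF(\mu,R'(u))$ the special restriction conditions $R(i',j')=R(i,j')\cup\{(i,j'-1)\}$ for $j'>j$ forbid the configurations that create a trivial pair in column $i'$.
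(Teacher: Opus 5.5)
Your proposal is correct and is essentially the paper's proof: the paper splits $\MF(\mu,R)$ into $\MF_1$ (fillings with $x=u$, $v\neq w$), $\MF_2=\T{i}{j}(\MF_1)$ (your excess fillings with $v=u$) and the remainder $\MF_3$, and uses the same two-term computation to get $\wt_m(\ws)+\wt_m(\ws')=\wt_{m'}(\ws)$ on $\MF_1$ and $\wt_m(\ws)=\wt_{m'}(\ws)$ on $\MF_3$. The paper does not discuss the trivial-action issue you flag, and your proposed fix works: for $j'>j$ the condition $(i,j'-1)\in R(i',j')$ forbids $\ws(i',j')=\ws(i,j'-1)$, which is the only way a new trivial pair can appear in column $i'$ (at the row where the swapping stops), so $\T{i}{j}$ acts nontrivially whenever $v\neq w$, and therefore reaches every excess filling, since these have $v=u\neq w$.
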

\begin{proof}
    We simplify the notation by letting $\MF(\mu,R(u)) = \MF$ and $\MF(\mu,R'(u)) = \MF'$. We partition the set $\MF$ into a union of three disjoint sets $\MF_1$, $\MF_2$, and $\MF_3$ as follows:
    \begin{enumerate}
        \item $\MF_1 := \{ \ws \in \MF\ \mid\ \ws(i,j) = \ws(i,j-1), \ws(i',j) \neq (\ws(i',-1)\}$,
        \item $\MF_2 := \{ \T{i}{j}(\ws) = \ws' \in \MF \ \mid\ \ws \in \MF_1\}$, and
        \item $\MF_3 = \{ \ws \in \MF\ \mid\ \ws(i',j) = \ws(i',j-1)\} \cup \{ \ws \in \MF\ \mid\ \ws(i',j) \neq \ws(i',j-1) \neq \ws(i,j) \neq \ws(i,j-1) \}$.
    \end{enumerate}
    The sets $\MF_1$ and $\MF_3$ partition $\MF'$ into a union of two disjoint sets. Our goal is to show that 
    \begin{align*}
        \wt_m(\ws) + \wt_m(\ws') &= \wt_{m'}(\ws) \qquad \text{ for } \ws \in \MF_1, \quad \text{ and } \\
        \wt_m(\ws) &= \wt_{m'}(\ws)  \qquad \text{ for } \ws \in \MF_3.
    \end{align*} 
    
    Notice that the associated mixed weight for $\ws \in \MF_3$ is the same for both $R$ and $R'$ because the additional box $(i,j) \in R(i',j)$ only affects the mixed weight associated to $\ws$, when viewed as a mixed nonattacking filling for $R'$, if $\ws(i,j) = \ws(i,j)$ and $\ws(i',j) \neq \ws(i',j-1)$. Therefore, $\wt_m(\ws) = \wt_m(\ws)$ for $\ws \in \MF_3$. 

    Let $\ws \in \MF_1$, then $\ws' \in \MF_2$. Define $\delta = 0$ if $\{x,v,w\}$ is not a type II coinversion triple, and $\delta = 1$ if $\{x,v,w\}$ is a coinversion triple. Then, by the calculation above,
    \begin{equation} \label{eq:SumReduce}
       \wt_m(\ws) + \wt_m(\ws') = \Big( t^{-\delta} \dfrac{1-q^{l(i,j)+1}t^{a(i,j)+1-t(i,j)+1}}{1-q^{l(i,j)+1}t^{a(i,j)+1-t(i,j)}} \Big) \wt_m(\ws).
    \end{equation}
    \noindent Recall that 
    $$\wt_m(\ws) = \prod_{\substack{u \in \dg'(\mu) \\ \ws(u) \neq\ws(d(i',j))}} \wt_{m}(u) = \wt_m(i',j) \prod_{\substack{u \in \dg'(\mu) \\ \ws(i',j) \neq \ws(d(i',j)) \\ u \neq (i',j)}} \wt_{m}(u)$$
    and 
    $$\wt_m(i',j) = 
    \begin{cases}
        q^{l(i',j)+1}t^{c(i',j)-r(i',j)} \dfrac{1-t}{1-q^{l(i',j)+1}t^{a(i',j)+1-t(i',j)}} & \qquad \text{ if } \ws(i',j) > \ws(d(i',j)) \\
        t^{c(i',j)-r(i',j)} \dfrac{1-t}{1-q^{l(i',j)+1}t^{a(i',j)+1-t(i',j)}} & \qquad \text{ if } \ws(i',j) < \ws(d(i',j)). 
    \end{cases}
    $$
    \noindent Note that $t(i,j) = t(i',j)$ because $R(i,j) = R(i',j)$ and that $a(i,j) + 1 = a(i',j)$. Thus, 
    $$1-q^{l(i,j)+1}t^{a(i,j)+1-t(i,j)+1} = 1-q^{l(i',j)+1}t^{a(i',j)+1-t(i',j)}.$$ 
    Let $t'$ denote the trivial pair function that corresponds to the restriction function $R'$. Since $R'(i',j) = R(i',j) \cup \{(i',j)\}$, $t'(i',j) = t(i',j)+1$. As a result, the sum in~\eqref{eq:SumReduce} simplifies to 
    \begin{align*}
        q^{l(i',j)+1}t^{c(i',j)-r(i',j)- \delta} \dfrac{1-t}{1-q^{l(i',j)+1}t^{a(i',j)-t(i',j)}} \prod_{\substack{u \in \dg'(\mu) \\ \ws(i',j) \neq \ws(d(i',j)) \\ u \neq (i',j)}} \wt_{m}(u) \qquad \text{ if $\ws(i',j) > \ws(d(i',j))$ or } \\
        t^{c(i',j)-r(i',j) - \delta} \dfrac{1-t}{1-q^{l(i',j)+1}t^{a(i',j)-t(i',j)}} \prod_{\substack{u \in \dg'(\mu) \\ \ws(i',j) \neq \ws(d(i',j)) \\ u \neq (i',j)}} \wt_{m}(u) \qquad \text{ if $\ws(i',j) < \ws(d(i',j))$. }
    \end{align*}
    Now, observe that 
    $$\wt_{m'}(i',j) = 
    \begin{cases}
        q^{l(i',j)+1}t^{c(i',j)-r(i',j) - \delta} \dfrac{1-t}{1-q^{l(i',j)+1}t^{a(i',j)-t(i',j)}} & \qquad \text{ if } \ws(i',j) > \ws(d(i',j)), \\
        t^{c(i',j)-r(i',j) - \delta} \dfrac{1-t}{1-q^{l(i',j)+1}t^{a(i',j)-t(i',j)}} & \qquad \text{ if } \ws(i',j) < \ws(d(i',j)). 
    \end{cases}
    $$
    since 
    $$a(i',j) - t(i',j) = a(i',j) + 1 - (t(i',j) + 1) = a(i',j) + 1 - t'(i',j).$$ 
    Also, notice that $\wt_m(u) = \wt_{m'}(u)$ for $u \in \dg'(\mu)$ and $u \neq = (i',j)$ since $R(u) = R'(u)$. 
    
    Finally, we see that 
    \begin{align*}
        \wt_m(\ws) + \wt_m(\ws') &= \Big( t^{-\delta} \dfrac{1-q^{l(i,j)+1}t^{a(i,j)+1-t(i,j)+1}}{1-q^{l(i,j)+1}t^{a(i,j)+1-t(i,j)}} \Big) \wt_m(\ws)\\
        &=  \Big( t^{-\delta} \dfrac{1-q^{l(i,j)+1}t^{a(i,j)+1-t(i,j)+1}}{1-q^{l(i,j)+1}t^{a(i,j)+1-t(i,j)}} \Big) \wt_m(i',j) \prod_{\substack{u \in \dg'(\mu) \\  \ws(i',j) \neq \ws(d(i',j)) \\ u \neq (i',j)}} \wt_{m}(u) \\
        &= \wt_{m'}(i',j) \prod_{\substack{u \in \dg'(\mu) \\  \ws(i',j) \neq \ws(d(i',j)) \\ u \neq (i',j)}} \wt_{m'}(u) \\
        &= \wt_{m'}(\ws).
    \end{align*}
    
    Therefore, 
    \begin{align*}
        \sum_{\ws \in \MF} \wt_m(\ws) x^{\sigma} &= \sum_{\ws \in \MF_1} \wt_m(\ws) x^{\sigma} + \sum_{\ws' \in \MF_2} \wt_m(\ws') x^{\sigma} + \sum_{\ws \in \MF} \wt_m(\ws) x^{\sigma} \\
        &= \sum_{\ws \in \MF_1} \wt_{m'}(\ws) x^{\sigma} + \sum_{\ws \in \MF_3} \wt_{m'}(\ws) x^{\sigma} 
        \\ &= \sum_{\ws \in \MF'} \wt_{m'}(\ws) x^{\sigma}.
    \end{align*}
    
\end{proof}

Now that we have all the required results to prove our main result, we recall the statement here for convenience.
\renewcommand{\thetheorem}{\ref{thm:MainResult}}
\begin{theorem}
Suppose $\mu = (\mu_1, \ldots, \mu_n)$ is a composition such that each box $u \in \dg'(\mu)$ hits at most one box. Then,
$$E_{\mu} = \sum_{\ws \in \mathcal{R}(\mu)} \wt_{m}(\ws)x^{\sigma}.
$$
\end{theorem}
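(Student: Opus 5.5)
The plan is to start from the \HHL model, i.e. the restriction function $R_0(u)=\emptyset$ for all $u\in\dg'(\mu)$. In that case $\MF(\mu,R_0)=\NAF(\mu)$ and the mixed weight agrees with $\wtqt$ (by the remark following the definition of the mixed weight). By \cite[Theorem 3.5.1]{HHL08}, the generating function is therefore $E_\mu$. We then reach the restriction function $R$ defining $\mathcal{R}(\mu)$ through a finite chain $R_0,R_1,\dots,R_N=R$. Each $R_{s+1}$ is obtained from $R_s$ by the construction $R\mapsto R'$ preceding Proposition~\ref{prop:InductionStep}: for one hitting pair, where $(i,j-1)$ hits $(i',j)$ with $\mu_i=\mu_{i'}$ and $\mu_c<j-1$ strictly between, we add $(i,j-1)$ to the restriction set of $(i',j)$. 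Proposition~\ref{prop:InductionStep} says each step preserves the generating function, so chaining the equalities gives $E_\mu=\sum_{\ws\in\mathcal{R}(\mu)}\wt_m(\ws)x^\sigma$.

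The order of the steps is the heart of the argument. Fix a pair of equal columns $i<i'$ that are adjacent in the hitting sense, and let $j_0$ be the least row with $\mu_c\le j_0-2$ for all $i<c<i'$. For this pair I would add the restrictions from the top row down, i.e. for $j=\mu_i,\mu_i-1,\dots,j_0$ in that order. When we perform the step at row $j$, the current function must be special in the sense of Definition~\ref{def:SpecialRestFunc}, and this is checked as follows.
\begin{itemize}
\item Condition (1) asks that $R(i',j')=R(i,j')\cup\{(i,j'-1)\}$ for all $j'>j$. This holds because those rows were already processed, and because the hypothesis that every box hits at most one box gives $R(i,j')=\emptyset$ for the pair under consideration. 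More precisely, column $i$ is not itself hit from a further column $i''<i$ in a way that interferes; if it is hit, it was handled in an earlier pair.
\item Condition (2), $R(i,j)=R(i',j)$, holds at the moment of the step because neither set has yet been modified.
\end{itemize}
The pairs of columns themselves I would process from left to right. Under the ``hits at most one box'' hypothesis, the restriction sets $R(i',\cdot)$ only ever receive boxes from the single column $i$. Consequently, modifications made for different pairs touch disjoint boxes and do not break speciality for one another.

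The main obstacle is verifying speciality at every intermediate step, together with the claim that the operator $\T{i}{j}$ is well defined on the relevant fillings. This is needed because Corollary~\ref{cor:ChangeInCandMSpFunc}, which removes the $\pi_{\ws'}-\pi_{\ws}$ correction, requires condition (1) for all rows above $j$. If $R(i,j')$ were nonempty because of a second hitting column, the equality $t(i,j)=t(i',j)$ used in \eqref{eq:SumReduce} would fail; this is exactly where the hypothesis enters. If the top-down order within a pair turns out to conflict with condition (1), I would try processing rows bottom-up instead. In that case condition (1) would have to be re-read as a statement about rows already unrestricted, and the ordering would be adjusted until each intermediate $R_s$ satisfies both conditions. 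Finally, I would check that $R_N$ equals the restriction function defining $\mathcal{R}(\mu)$, i.e. $R_N(u)=\{v: v \text{ hits } u\}$. This reduces to comparing the triples $(i,i',j)$ produced above with Definition~\ref{def:Hit}.
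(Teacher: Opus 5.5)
Your proposal is correct and is essentially the paper's own proof. You start from $R\equiv\emptyset$ (the HHL model), add one hitting box at a time via Proposition~\ref{prop:InductionStep}, and process the pairs of columns left to right with each pair taken from its top row downward, which is exactly the linear order the paper uses. Your explicit check that each intermediate restriction function is special in the sense of Definition~\ref{def:SpecialRestFunc} is a detail the paper leaves implicit. The hypothesis in fact forces the left column of each pair never to be hit at all, since otherwise some box would be hit twice; hence $R(i,\cdot)=\emptyset$ throughout, your bottom-up fallback is unnecessary, and the ``handled in an earlier pair'' alternative would actually break condition (1).
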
 

\begin{proof}
    We partition the diagram $\dg'(\mu)$ into disjoint rectangular regions given by the southwest and northeast corners. That is, each region is a set $\{(c,r) \mid i' \le c \le i, j \le r \le j'\}$, where $(i,j), (i',j') \in \dg'(\mu)$. In our case, we partition $\dg'(\mu)$ into pairs of columns $i$ and $i'$ such that 
    $\lambda_i = \lambda_{i'}$ and there exists a smallest $1 \le j \le \mu_i$ such that $\mu_c \le j-2$ for $i < c <i'$. Such a block is given by the corners $(i,j)$ and $(i',\mu_{i'})$. See Figure~\ref{fig:OperatorAction} for an example of this type of block. If there does not exist a smallest $j$, then the block is given by the corners $(i,1)$ and $(i,\mu_i)$. The proof is by induction.

    To begin, notice that if a block consists of a single column, then there is nothing to prove. It is enough to just consider blocks that consist of two columns. Suppose $B_{i_1}, B_{i_2}, \ldots, B_{i_l}$ are the blocks consisting of two columns, with the left column of block $B_{i_c}$ being part $\mu_{i_c}$; as a result, $i_1 < i_2 < \ldots < i_l$. Denote the right column block of $B_{i_c}$ by ${i'_c}$ for $1 \le c \le l$. We define the following linear order on the boxes in columns $i_c$ for $1 \le c \le l$. For $(i_c,j) \in B_{i_c}$ and $(i_{c'},j) \in B_{i_{c'}}$, we say $(i_c, j) < (i_{c'}, j')$ if $i_c < i_{c'}$ or, in the case that $i_c = i_{c'}$, if $j > j'$. That is, the smallest box is the top-leftmost box in $B_{i_1}$, and the largest box is the bottom-rightmost box in $B_{i_l}$.

    Now, we define a sequence of restriction functions on the diagram of $\mu$ recursively. First, let $R$ be the restriction function such that $R(u) = \emptyset$ for all $u \in \dg'(\mu)$; the resulting mixed nonattacking fillings and mixed $q,t$-weight is the same as the $\HHL$ model. 
    Define $R^{(i_1,\mu_{i_1})}(u) = R(u)$ for all $u \neq (i_1,\mu_{i_1}) \in \dg'(\mu)$ and $R^{(i_1,\mu_{i_1})}(i',\mu_{i'}) = \{(i_1,\mu_{i_1}-1)\}$. 
    Suppose $(i_c,j)$ covers the box $(i_{c'},j')$ based on the ordering given above. Then, given $R^{(i_c,j)}$, we define $R^{(i_{c'},j')}$ to be the restriction function on $\dg'(\mu)$ such that $R^{(i_{c'},j')}(u) = R^{(i_{c},j)}(u)$ for $u \neq (i'_{c'},j')$ and $R^{(i_{c'},j')}(i'_{c'},j') = (i_{c'},j'-1)$. The idea is that, at each step, we restrict one additional box. Denote the set of mixed fillings $\MF(\mu,R^{(i_c,j)}(u))$ by $\MF^{(i_c,j)}$, and denote the corresponding mixed $q,t$-weight by $\wt_m^{(i_c,j)}$. 

    If $(i_l,j)$ is the final box in the ordering, our goal is to prove 
    \begin{equation} \label{eq:Claim}
    E_\mu = \sum_{\ws \in \MF^{(i_l,j)}} \wt_m^{(i_l,j)}(\ws) x^{\sigma},
    \end{equation}
    since $\MF^{(i_l,j)} = \mathcal{R}(\mu)$ and the mixed $q,t$-weights are equivalent. We proceed by induction on the boxes of the form $(i_c,j)$ in the order defined above. 
    
    The base case is the following claim: $$E_\mu = \sum_{\ws \in \MF(\mu, R^{(i_1,\mu_1)}(u))} \wt_m^{(i_1,\mu_1)}(\ws) x^{\sigma}.$$ Notice that this is the same setup as in Proposition~\ref{prop:InductionStep} but $R'$ is $ R^{(i_1,\mu_1)}(u))$. Thus, the base case is true.

    For the induction hypothesis, suppose the weight generating functions corresponding to each $R^{(i_c,j)}$ are all equivalent except for the last box $(i_l,j)$ in the order. Without loss of generality, suppose that the box before $(i_l,j)$ in the order is $(i_l,j+1)$.  We show that $$ \sum_{\ws \in \MF(\mu,R^{(i_l,j+1)}(u))} \wt_m^{(i_l,j+1)}(\ws)x^{\sigma} = \sum_{\ws \in \MF(\mu,R^{(i_l,j)}(u))} \wt_{m}^{(i_l,j)}(\ws)x^{\sigma}.$$ The previous equality follows by Proposition~\ref{prop:InductionStep}, proving the induction step. Therefore, Equation~\eqref{eq:Claim} is true, which proves the statement.
 
\end{proof}

\section{Conclusion}\label{sect:Conclusion}

In this section we compare the restricted fillings model with the $\HHL$ and $\MLQ$ models. In particular, we note that the number of restricted fillings for a partition $\lambda$ equals the number of multiline queues of type $\lambda$. Also, we explain the difficulty of extending our proof methods to more than two columns. We do have some ideas to get around this difficulty and are currently working on them.

\subsection{Number of Nonattacking Fillings and Restricted Nonattacking Fillings}

For compositions in general, the number of restricted nonattacking fillings can be significantly smaller compared to the classical nonattacking fillings. We show this data in Table~\ref{tab:HHLvsRestricted}.

In comparison with the MLQ model, we notice that for partitions the number of restricted nonattacking fillings is equal to the number of multiline queues. \emph{We currently have a bijection between these two sets that is not weight-preserving, see \ref{section:RNF}.}
If Conjecture~\ref{conj:MainConj} is true, then the pair of statistics for $q$ and $t$ in the restricted nonattacking filling case is equidistributed with the corresponding statistics for $q$ and $t$ in the \MLQ model. It would be interesting to find a bijection between restricted nonattacking fillings and multiline queues for partitions that is weight-preserving.

\begin{table}[h]
    \centering
    \begin{tabular}{c|c|c}
     $\mu$              &  Nonattacking fillings   & Restricted nonattacking fillings\\
     \hline 
     $(3,3,1,1,0)$      & 64        & 27 \\
     $(3,1,1,3,0)$      & 128       & 104 \\
     $(1,1,0,3,3)$      & 1024      & 507 \\    
     $(4,4,4,2,2,0)$    & 16384     & 1632 \\
     $(4,2,4,4,2,0)$    & 36864     & 11424 \\
     $(0,2,4,4,4,2)$    & 1769472   & 113152 \\
     $(0,2,2,4,4,4)$    & 4194304   & 166464 \\
      & &
    \end{tabular}
    \caption{Comparison of the number of nonattacking fillings and restricted nonattacking fillings for several compositions. }
    \label{tab:HHLvsRestricted}
\end{table}

\subsection{Discussion About the General Case}

To understand why extending to more than two columns is difficult, we explain how we discovered the cancellation in the two-column case in an example and show why it is difficult to extend.

Let us consider the composition $\mu = (0,0,2,2,2,2)$.
In Figure~\ref{fig:NAF(0,0,2,2,2,2)}, we show a pair of nonattacking fillings whose $q,t$-weights add nicely. We build the proof of our main result, Theorem~\ref{thm:MainResult}, from this observation by creating a family of operators that mimic how the $q,t$-weights are added.

\begin{figure}[h!]
    \centering
    \begin{tikzpicture}[scale=0.5]
     \begin{scope}
         \draw[thick] (0,0) grid (6,1);
         \draw[thick] (2,1) grid (4,3);

         \node at (0.5,0.5) {$1$};
         \node at (1.5,0.5) {$2$};
         \node at (2.5,0.5) {$3$};
         \node at (3.5,0.5) {$4$};
         \node at (4.5,0.5) {$5$};
         \node at (5.5,0.5) {$6$};

         \node at (2.5,1.5) {$3$};
         \node at (3.5,1.5) {$4$};

         \node at (2.5,2.5) {$3$};
         \node at (3.5,2.5) {$1$};

         \node at (3.5,-1.25) {$t\dfrac{1-t}{1-qt^2}$};

         \node at (7,-1) {$+$};
     \end{scope}
     \begin{scope}[xshift = 8cm]
         \draw[thick] (0,0) grid (6,1);
         \draw[thick] (2,1) grid (4,3);

         \node at (0.5,0.5) {$1$};
         \node at (1.5,0.5) {$2$};
         \node at (2.5,0.5) {$3$};
         \node at (3.5,0.5) {$4$};
         \node at (4.5,0.5) {$5$};
         \node at (5.5,0.5) {$6$};

         \node at (2.5,1.5) {$3$};
         \node at (3.5,1.5) {$4$};

         \node at (2.5,2.5) {$1$};
         \node at (3.5,2.5) {$3$};

         \node at (3.5,-1.25) {$\dfrac{(1-t)^2}{(1-qt)(1-qt^2)}$};

         \node at (8.5, -1.25) {\hspace{1cm}$ = \quad \dfrac{1-t}{1-qt}$};
     \end{scope}
    \end{tikzpicture}
    \caption{Two nonattacking fillings that show how the two-column case works.}
    \label{fig:NAF(0,0,2,2,2,2)}
\end{figure}

The next step is to extend the same reasoning to longer rows; however, it is not obvious how to pair nonattacking fillings so that the sum reduces like in the case for two columns.  In Figure~\ref{fig:NAF002222}, we show an example where grouping the nonattacking fillings simplifies as in Conjecture~\ref{conj:MainConj}. Each row corresponds to a restricted nonattacking filling, with its mixed $q,t$-weight on the right. 

\begin{figure}[h!]
    \centering
    \begin{tikzpicture}[scale=0.5]
     \begin{scope}
         \draw[thick] (0,0) grid (6,1);
         \draw[thick] (2,1) grid (5,3);

         \node at (0.5,0.5) {$1$};
         \node at (1.5,0.5) {$2$};
         \node at (2.5,0.5) {$3$};
         \node at (3.5,0.5) {$4$};
         \node at (4.5,0.5) {$5$};
         \node at (5.5,0.5) {$6$};

         \node at (2.5,1.5) {$3$};
         \node at (3.5,1.5) {$4$};
         \node at (4.5,1.5) {$2$};

         \node at (2.5,2.5) {$3$};
         \node at (3.5,2.5) {$1$};
         \node at (4.5,2.5) {$5$};

         \node at (3.5,-1) {$qt^2\frac{(1-t)^2}{(1-qt^2)(1-qt^3)}$};

         \node at (7,-1) {$+$};
     \end{scope}
     \begin{scope}[xshift = 8cm]
         \draw[thick] (0,0) grid (6,1);
         \draw[thick] (2,1) grid (5,3);

         \node at (0.5,0.5) {$1$};
         \node at (1.5,0.5) {$2$};
         \node at (2.5,0.5) {$3$};
         \node at (3.5,0.5) {$4$};
         \node at (4.5,0.5) {$5$};
         \node at (5.5,0.5) {$6$};

         \node at (2.5,1.5) {$3$};
         \node at (3.5,1.5) {$4$};
         \node at (4.5,1.5) {$2$};

         \node at (2.5,2.5) {$1$};
         \node at (3.5,2.5) {$3$};
         \node at (4.5,2.5) {$5$};
         
         \node at (3.5,-1) {$qt\frac{(1-t)^3}{(1-qt)(1-qt^2)(1-qt^3)}$};

         \node at (7.5, -1) {$+$};
     \end{scope}
     \begin{scope}[xshift = 16cm]
         \draw[thick] (0,0) grid (6,1);
         \draw[thick] (2,1) grid (5,3);

         \node at (0.5,0.5) {$1$};
         \node at (1.5,0.5) {$2$};
         \node at (2.5,0.5) {$3$};
         \node at (3.5,0.5) {$4$};
         \node at (4.5,0.5) {$5$};
         \node at (5.5,0.5) {$6$};

         \node at (2.5,1.5) {$3$};
         \node at (3.5,1.5) {$4$};
         \node at (4.5,1.5) {$2$};

         \node at (2.5,2.5) {$1$};
         \node at (3.5,2.5) {$5$};
         \node at (4.5,2.5) {$3$};

         \node at (3.5,-1) {$q^2t^3\frac{(1-t)^3}{(1-qt)(1-qt^2)(1-qt^3)}$};

         \node at (11, -1) {$ \hspace{1cm} = qt\frac{(1-t)^2}{(1-qt)(1-qt^2)}$};
     \end{scope}
     \begin{scope}[yshift = -6cm]
         \draw[thick] (0,0) grid (6,1);
         \draw[thick] (2,1) grid (6,3);

         \node at (0.5,0.5) {$1$};
         \node at (1.5,0.5) {$2$};
         \node at (2.5,0.5) {$3$};
         \node at (3.5,0.5) {$4$};
         \node at (4.5,0.5) {$5$};
         \node at (5.5,0.5) {$6$};

         \node at (2.5,1.5) {$3$};
         \node at (3.5,1.5) {$4$};
         \node at (4.5,1.5) {$2$};
         \node at (5.5,1.5) {$6$};

         \node at (2.5,2.5) {$3$};
         \node at (3.5,2.5) {$5$};
         \node at (4.5,2.5) {$1$};
         \node at (5.5,2.5) {$6$};

         \node at (3.5,-1) {$qt\frac{(1-t)^2}{(1-qt^2)(1-qt^3)}$};

         \node at (7,-1) {$+$};
     \end{scope}
     \begin{scope}[xshift = 8cm, yshift = -6cm]
         \draw[thick] (0,0) grid (6,1);
         \draw[thick] (2,1) grid (5,3);

         \node at (0.5,0.5) {$1$};
         \node at (1.5,0.5) {$2$};
         \node at (2.5,0.5) {$3$};
         \node at (3.5,0.5) {$4$};
         \node at (4.5,0.5) {$5$};
         \node at (5.5,0.5) {$6$};

         \node at (2.5,1.5) {$3$};
         \node at (3.5,1.5) {$4$};
         \node at (4.5,1.5) {$2$};

         \node at (2.5,2.5) {$1$};
         \node at (3.5,2.5) {$3$};
         \node at (4.5,2.5) {$5$};

         \node at (3.5,-1) {$q\frac{(1-t)^3}{(1-qt)(1-qt^2)(1-qt^3)}$};

         \node at (7.5, -1) {$+$};
     \end{scope}
     \begin{scope}[xshift = 16cm, yshift = -6cm]
         \draw[thick] (0,0) grid (6,1);
         \draw[thick] (2,1) grid (5,3);

         \node at (0.5,0.5) {$1$};
         \node at (1.5,0.5) {$2$};
         \node at (2.5,0.5) {$3$};
         \node at (3.5,0.5) {$4$};
         \node at (4.5,0.5) {$5$};
         \node at (5.5,0.5) {$6$};

         \node at (2.5,1.5) {$3$};
         \node at (3.5,1.5) {$4$};
         \node at (4.5,1.5) {$2$};

         \node at (2.5,2.5) {$5$};
         \node at (3.5,2.5) {$1$};
         \node at (4.5,2.5) {$3$};
         
         \node at (3.5,-1) {$q^2t^2\frac{(1-t)^3}{(1-qt)(1-qt^2)(1-qt^3)}$};

         \node at (11, -1) {$ \hspace{1cm} =  q\frac{(1-t)^2}{(1-qt)(1-qt^2)}$};
     \end{scope}
    \end{tikzpicture}
    \caption{An example of the sum of $q,t$-weight simplifying in the three column case.}
    \label{fig:NAF002222}
\end{figure}

This is not the general case because simple transpositions are not enough. To see this, we focus solely on the top row. Consider the following setup of the top of $k$ columns in Figure~\ref{fig:GeneralTopRow}. Let $a_c$ and $b_c$ represent the entries given by a restricted nonattacking filling $\ws$. In this scenario, we assume $a_0 = b_0$ and all other entries are distinct, $a_c \neq b_{c'}$ where $c \neq 0$ and $c' \neq 0$. We define $\delta_c = 0$ if $\{a_0, a_c, b_c\}$ is not a type II coinversion triple in $\ws$, and $\delta_c = 1$, otherwise.

\begin{figure}[h!]
    \centering
    \begin{tikzpicture}[scale = 0.8]
        \draw[thick] (0,0) grid (3,2);
        \draw[thick] (6,0) grid (8,2);

        \node[] at (4.5,1) {$\ldots$};

        \node[] at (0.5,1.5) {$a_0$};
        \node[] at (1.5,1.5) {$a_1$};
        \node[] at (2.5,1.5) {$a_2$};
        \node[] at (6.5,1.5) {$a_{k-1}$};
        \node[] at (7.5,1.5) {$a_{k}$};

        \node[] at (0.5,0.5) {$b_0$};
        \node[] at (1.5,0.5) {$b_1$};
        \node[] at (2.5,0.5) {$b_2$};
        \node[] at (6.5,0.5) {$b_{k-1}$};
        \node[] at (7.5,0.5) {$b_{k}$};

        \node[] at (1.5,2.5) {$\delta_1$};
        \node[] at (2.5,2.5) {$\delta_2$};
        \node[] at (6.5,2.5) {$\delta_{k-1}$};
        \node[] at (7.5,2.5) {$\delta_{k}$};
    \end{tikzpicture}
    \caption{A representation of the top row of $k$ columns in the augmented diagram of a composition $\mu$.}
    \label{fig:GeneralTopRow}
\end{figure}

There are $(k-1)!$ restricted nonattacking fillings generated by permuting the entries $a_1, a_2, \ldots, a_k$. There are $k!$ nonattacking fillings that correspond to permuting all the entries $a_0, a_1, \ldots, a_k$. Our goal is to partition the nonattacking fillings into $(k-1)!$ classes such that each class contains a unique restricted nonattacking filling and the sum of the $\HHL$ $q,t$-weights of a class equals the mixed $q,t$-weight of the restricted nonattacking filling. 

We show an example in Figure~\ref{fig:k=4Example} for $k = 3$. In this case, moving $a_0=3$ to the right by transposing adjacent entries is not enough.  

\begin{figure}[h!]
    \centering
    \begin{tikzpicture}[scale = 0.5]
        \begin{scope}
            \draw[thick] (0,0) grid (4,2);

            \node[] at (0.5,1.5) {$3$};
            \node[] at (1.5,1.5) {$1$};
            \node[] at (2.5,1.5) {$2$};
            \node[] at (3.5,1.5) {$5$};

            \node[] at (0.5,0.5) {$3$};
            \node[] at (1.5,0.5) {$4$};
            \node[] at (2.5,0.5) {$6$};
            \node[] at (3.5,0.5) {$7$};

            \node[] at (1.5,2.5) {$1$};
            \node[] at (2.5,2.5) {$1$};
            \node[] at (3.5,2.5) {$0$};

            \node[] at (2, -1) {$\frac{t^2(1-t)^3}{(1-qt^2)(1-qt^3)(1-qt^4)}$};
        \end{scope}
        \begin{scope}[xshift=8cm]
            \draw[thick] (0,0) grid (4,2);

            \node[] at (0.5,1.5) {$1$};
            \node[] at (1.5,1.5) {$3$};
            \node[] at (2.5,1.5) {$2$};
            \node[] at (3.5,1.5) {$5$};

            \node[] at (0.5,0.5) {$3$};
            \node[] at (1.5,0.5) {$4$};
            \node[] at (2.5,0.5) {$6$};
            \node[] at (3.5,0.5) {$7$};

            \node[] at (2, -1) {$\frac{t(1-t)^4}{(1-qt)(1-qt^2)(1-qt^3)(1-qt^4)}$};
        \end{scope}
        \begin{scope}[xshift=16cm]
            \draw[thick] (0,0) grid (4,2);

            \node[] at (0.5,1.5) {$1$};
            \node[] at (1.5,1.5) {$2$};
            \node[] at (2.5,1.5) {$3$};
            \node[] at (3.5,1.5) {$5$};

            \node[] at (0.5,0.5) {$3$};
            \node[] at (1.5,0.5) {$4$};
            \node[] at (2.5,0.5) {$6$};
            \node[] at (3.5,0.5) {$7$};

            \node[] at (2, -1) {$\frac{(1-t)^4}{(1-qt)(1-qt^2)(1-qt^3)(1-qt^4)}$};
        \end{scope}
        \begin{scope}[xshift=24cm]
            \draw[thick] (0,0) grid (4,2);

            \node[] at (0.5,1.5) {$5$};
            \node[] at (1.5,1.5) {$2$};
            \node[] at (2.5,1.5) {$1$};
            \node[] at (3.5,1.5) {$3$};

            \node[] at (0.5,0.5) {$3$};
            \node[] at (1.5,0.5) {$4$};
            \node[] at (2.5,0.5) {$6$};
            \node[] at (3.5,0.5) {$7$};

            \node[] at (2, -1) {$\frac{qt^3(1-t)^4}{(1-qt)(1-qt^2)(1-qt^3)(1-qt^4)}$};
        \end{scope}
        \node[] at (15,-3.25) {$\sum \wtqt(\ws) = \dfrac{(1-t)^3}{(1-qt)(1-qt^2)(1-qt^3)}$};
    \end{tikzpicture}
    \caption{Example of a class whose $q,t$-weights simplify as to the form in Conjecture~\ref{conj:MainConj} }
    \label{fig:k=4Example}
\end{figure}

Even though finding the exact filling with the correct weight seems to be a very difficult question, it is not hard to determine the exact weight that this unknown filling needs to be in order for the simplification to occur as expected. 
We are currently working on a way to determine the classes in the general case and plan to explore Conjecture~\ref{conj:MainConj} for the permuted basement nonattacking fillings~\cite{A19}. These are essentially the same as nonattacking fillings; the only difference is that the basement is a specified permutation of the entries. Conjecture~\ref{conj:MainConj} extends naturally to these fillings since the definition of attacking only depends on the structure of the augmented diagram of a composition. 

\bibliographystyle{amsplain}
\bibliography{biblio_Alex}

\end{document}